\documentclass[11pt,fleqn]{amsart} 

\usepackage[usenames,dvipsnames,condensed]{xcolor}

\usepackage{amsthm}
\usepackage{amsfonts}
\usepackage[english]{babel}
\usepackage[usenames]{xcolor}
\usepackage{graphicx}
\usepackage{soul}
\usepackage{stfloats}
\usepackage{morefloats}
\usepackage{cite}
\usepackage{lscape}
\usepackage{epstopdf}
\usepackage{braket}
\usepackage[lite]{amsrefs}
\usepackage{mathbbol}
\usepackage{tikz}
\usepackage{tikz-cd}
\usepackage{mathtools}
 \usepackage{overpic}

\setlength{\textwidth}{6.5in}
\setlength{\topmargin}{-0.2in} 
\setlength{\textheight}{9in}
\setlength{\oddsidemargin}{0in}
\setlength{\evensidemargin}{0in}
\usepackage{amsmath,amstext,amsopn,amsfonts,eucal,amssymb}
\usepackage{graphicx,wrapfig,url}

\newtheorem{theorem}{Theorem}[section]
\newtheorem{corollary}[theorem]{Corollary}
\newtheorem{lemma}[theorem]{Lemma}
\newtheorem{proposition}[theorem]{Proposition}
\newtheorem{definition}[theorem]{Definition}

\newtheorem{example}[theorem]{Example}

\newtheorem{remark}[theorem]{Remark}


\setcounter{tocdepth}{1}

\begin{document}

\title{Yang-Baxter Hochschild Cohomology} 

\author{Masahico Saito} 
\address{Department of Mathematics, 
	University of South Florida, Tampa, FL 33620, U.S.A.} 
\email{saito@usf.edu} 

\author{Emanuele Zappala} 
\address{Department of Mathematics and Statistics, Idaho State University\\
	Physical Science Complex |  921 S. 8th Ave., Stop 8085 | Pocatello, ID 83209} 
\email{emanuelezappala@isu.edu}

\maketitle

\begin{abstract}

Braided algebras are associative algebras endowed with a Yang-Baxter operator that satisfies certain compatibility conditions involving the multiplication. 
Along with Hochschild cohomology of  algebras, there is also a notion of Yang-Baxter cohomology, which is associated to any Yang-Baxter operator. 
In this article, we introduce and study a cohomology theory for braided algebras in dimensions 2 and 3, that unifies Hochschild and Yang-Baxter cohomology theories, and generalizes to all dimensions in characteristic $2$. We show that its
second cohomology group classifies infinitesimal deformations of braided algebras.
We provide infinite families of examples of braided algebras, including Hopf algebras, tensorized multiple conjugation quandles, and braided Frobenius algebras. Moreover, we derive the obstructions  to higher deformations, which lie in the third cohomology group. Relations to Hopf algebra cohomology are also discussed.
\end{abstract}

\date{\empty}

\tableofcontents

\section{Introduction}

Hochschild homology is defined for associative algebras, and has been studied extensively in deformation theories of algebras \cite{GS}. It corresponds to group homology in discrete case \cite{Brown}. 
Its 2-dimensional cohomology group, in particular,  describes extensions of groups.
A counterpart of group homology for self-distributive (SD) structures called racks and quandles
has been developed \cite{FR,CJKLS}. The SD structure is related to braiding, 
and  its homology theory has been  applied to knot theory \cite{FRS},
such as in quandle cohomology theories \cite{CJKLS}. Its 2-dimensional cohomology group describes extensions of racks and quandles as well.
Tensor versions of SD structure that correspond to Hochschild homology, and their homology theory, have also been developed
\cite{CCES-coalgebra,CCES-adjoint}.

The SD structure has been used to construct solutions to the Yang-Baxter equation (YBE)  \cite{AZ,CCES-coalgebra}.
The YBE has been studied extensively in physics and knot theory as well.
In particular in knot theory, solutions to YBE have been employed in the construction of quantum knot invariants. 
Counterparts of Hochschild homology for the Yang-Baxter equation have been also developed
in several ways.
A homology theory for set-theoretic YBE, the discrete case, was defined in \cite{CCES1}. 
Diagrammatic representations have been developed for this theory in \cite{Lebed,PW},
through which it was extended to the tensor case.
Interpretations of this tensor version 
do not seem clearly known at this time. 
A different homology theory for YBE was defined in \cite{Eisermann1,Eisermann}  from point of view of
the deformation theory. Relations between these two homology theories do not seem to be known.  However, a relation between $n$-Lie algebra cohomology, SD cohomology and the Yang-Baxter cohomology of Eisermann has recently been studied in \cite{El-Zap}.

Diagrammatics of associative and SD structures have been used effectively in both algebraic and topological context. When a binary  operation is represented by a trivalent vertex of a graph, associativity corresponds to the change of tree diagrams called {\it IH-move}. 
As diagrammatic representation of SD operations, crossings of knot diagrams have been employed,
that corresponds to the braiding and the YBE. 

On the other hand,  
handlebody-links and surface ribbons (compact orientable surfaces with boundary expressed in knotted ribbon forms) have been represented by 
spatial trivalent graphs, with sets of moves specified in each context. See \cite{Ishii08,CIST,Matsu}. 
Thus the idea of using operations that possess both associative and SD (partial) operations
has been explored \cite{CIST} in discrete case, and its homology theory has been developed.
A tensor version of such structure was proposed in \cite{SZbrfrob}, called {\it braided Frobenius algebras}, and a construction of such algebras was provided from certain Hopf algebras through 
{\it quantum heaps}.

For both algebraic and knot theoretic perspectives, it naturally arises the question of whether there exists a
(co)homology theory for algebras that has both associative and Yang-Baxter (YB) structures,
which unifies Hochschild and Yang-Baxter (co)homology theories.
Algebras with braiding, called braided algebras, have been defined in \cite{Baez}, where
a homology theory was also introduced.  
Homology theories of algebras with braiding present have been studied in contexts 
different from this paper, 
for example in  \cite{HK,KP}.
In this paper we propose a cohomology theory that unifies 
Yang-Baxter and Hochschild theories.
We take 
an approach from deformation theory, and formulate low dimensional 
differentials, and show that the cohomology groups play roles in integrability.
We point out that our approach differs from \cite{Baez} in that our aim is to obtain a unified
theory of Hochschild and YB cohomologies, while in \cite{Baez} the main objective was to
encode the braiding operation in the Hochschild homology by replacing the natural switching
morphism of the symmetric tensor categories of vector spaces (or modules) by the braiding.

 We hereby give a brief overview of the main results of this article. We introduce a chain complex (in low dimensions) that combines Hochschild's theory of algebra cohomology and Yang-Baxter cohomology, Proposition~\ref{pro:cochain}. This theory applies to braided algebras, answering in the affirmative the question regarding the existence of a cohomological theory unifying Hochschild cohomology and Yang-Baxter cohomology. We therefore show that the second cohomology group of this cochain complex classifies deformations of braided algebras, which is shown in Theorem~\ref{thm:classification}. This result fits in the usual paradigm that the second cohomology group of an algebraic structure with coefficients in itself, controls the deformation theory of the structure. In addition, we derive the obstruction for the existence of quadratic deformations in Theorem~\ref{lem:higher}, show that this lies in the third cohomology group in Lemma~\ref{lem:3cocy} and derive a sufficient condition for the existence of quadratic deformations in Corollary~\ref{cor:quadratic}.

The paper is organized as follows.
In Section~\ref{sec:ba}, definitions of braided algebras and examples will be presented,
for which the unified cohomology theories of YB and Hochschild can be
 applied.
Yang-Baxter Hochschild homology is defined up to 2-differentials  from deformation 
theory in Section~\ref{sec:deform}.
Relations to braided multiplications and Hopf algebra homology are discussed in Sections~\ref{sec:braidedmulti} and \ref{sec:Hopf_cohomology}, respectively. 
The theory is further extended to 3-differentials using higher deformations in Section~\ref{sec:3coh}.
Detailed proofs for quadratic deformations are deferred to the appendix.

	Throughout the article, following common conventions, we indicate cochain complexes by the letter $C$, cocycles by $Z$ and coboundaries by $B$, all of them with appropriate subscripts and superscripts to indicate the type of complex used, e.g. Hochschild or YB, and the dimension.

\section{Braided algebras} \label{sec:ba} 

In this section we define YI, IY, and braided algebras for which we define Yang-Baxter Hochschild (co)homology, and give examples of such algebras.
These are algebras with both multiplication and braiding that satisfy certain compatibility conditions. Throughout this section, and the rest of the article, we assume that all algebras are over a unital commutative ring $\mathbb k$,
and we use the symbol $\mathbb 1$ to indicate the identity map.
 The set ${\rm Hom}_{\mathbb k}( V , W)$ of $\mathbb k$-module homomorphisms, for $\mathbb k$-modules $V$ and $W$, is simply denoted by ${\rm Hom}(V , W)$.

Recall that for an invertible map $R: V \otimes V \rightarrow V \otimes V$
of a ${\mathbb k}$-module $V$,
the equation
$$(R \otimes {\mathbb 1}) ({\mathbb 1} \otimes R) (R \otimes {\mathbb 1}) 
=({\mathbb 1} \otimes R) (R \otimes {\mathbb 1}) ({\mathbb 1} \otimes R) $$
is called the {\it Yang-Baxter} equation (YBE), and a solution of it is called a Yang-Baxter (YB) operator.

\begin{definition}
{\rm
Let $(V, \mu)$ be a ${\mathbb k}$-algebra 
for a unital ring $\mathbb k$, with a YB operator $R: V \otimes V \rightarrow V \otimes V$.
We say that $\mu$ and $R$ satisfy the ${\rm YI}$  (resp. ${\rm IY}$) condition if they satisfy 
\begin{eqnarray}
(\mathbb 1\otimes \mu) ( R \otimes {\mathbb 1}) ({\mathbb 1} \otimes  R  ) &=& R(\mu\otimes {\mathbb 1})\label{eqn:YI}, \\ 
(\mu\otimes \mathbb 1) ({\mathbb 1} \otimes R )(R \otimes {\mathbb 1}) &=& R(\mathbb 1\otimes \mu) ,\label{eqn:IY}
\end{eqnarray}
respectively.
We call $(V, \mu, R)$ a YI algebra (resp. IY algebra),
the naming defined in \cite{JSC}.
 If $(V, \mu, R)$ satisfies both Equation~\eqref{eqn:YI} and Equation~\eqref{eqn:IY}, we call it a braided algebra.  
}
\end{definition}

\begin{definition}\label{def:braided_hom}
	{\rm 
	If $(V_1,\mu_1, R_1)$ and $(V_2,\mu_2, R_2)$ are braided algebras, we say that a linear map $f : V_1 \rightarrow V_2$ is a homomorphism of braided algebras, if $f$ is an algebra homomorphism such that the following diagram commutes
	\begin{center}
	\begin{tikzcd}
		V_1\otimes V_1\arrow[rr,"f\otimes f"]\arrow[d,"R_1"] & & V_2\otimes V_2\arrow[d,"R_2"]\\
		V_1\otimes V_1\arrow[rr,"f\otimes f"]& &V_2\otimes V_2 
		\end{tikzcd} 
	\end{center}
     If $f$ admits an inverse that is itself a braided algebra homomorphism, we say that $f$ is a braided isomorphism. 
 }
\end{definition}

A YB operator and the multiplication $\mu$ are depicted in Figure~\ref{IYYI} (A) and (B), respectively,
and using these diagrams, Equation~\eqref{eqn:YI} and Equation~\eqref{eqn:IY} are depicted in (C) and (D), respectively.
An  array of 
 vertical $n$ edges represents $V^{\otimes n}$, and the diagrams are read from top to bottom, in the direction of  homomorphisms.
 A crossing in (A) represents a YB operator $R: V^{\otimes 2} \rightarrow V^{\otimes 2}$, and 
a trivalent vertex  in (B) represents a multiplication $\mu: V ^{\otimes 2} \rightarrow V$.

\begin{figure}[htb]
\begin{center}
\includegraphics[width=5in]{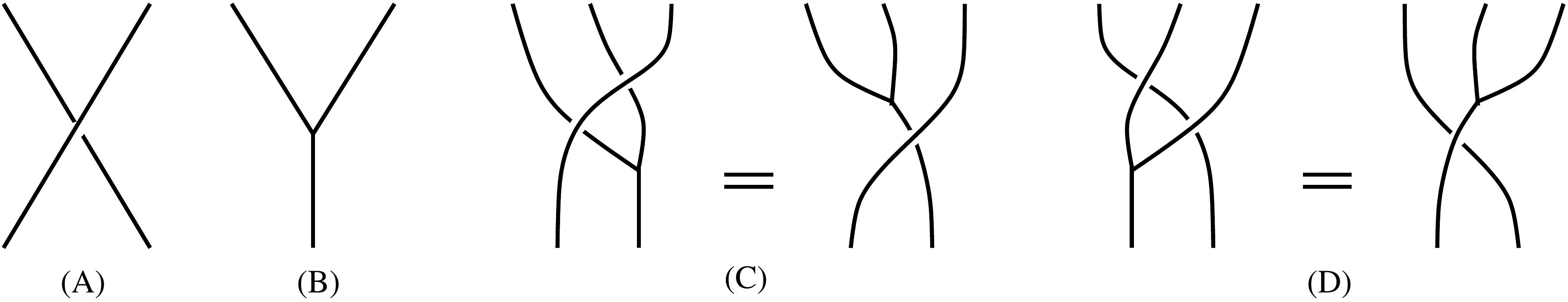}
\end{center}
\caption{}
\label{IYYI}
\end{figure}

\subsection{Braided algebras from tensorized multiple conjugation quandles}

We show that tensorization of multiple conjugation quandles (MCQs) \cite{CIST} produces 
braided algebras.

First, recall a \textit{quandle}~\cite{Joyce82,Matveev82}, is a non-empty set $X$ with a binary operation $*:X\times X\to X$ satisfying the following axioms.
\begin{itemize}
\item[(1)] For any $a\in X$, we have $a*a=a$.
\item[(2)] For any $a\in X$, the map $S_a:X\to X$ defined by $S_a(x)=x*a$ is a bijection.
\item[(3)] For any $a,b,c\in X$, we have $(a*b)*c=(a*c)*(b*c)$.
\end{itemize}
A {\it rack} is a set with an operation that satisfies (2) and (3).

\begin{definition}[\cite{Ishii08}]
	{\rm
A \textit{multiple conjugation quandle (MCQ)} $X$ is the disjoint union of groups $G_\lambda$,
where $\lambda$ is an element of a finite index set $\Lambda$, 
with a binary operation $*:X\times X\to X$ satisfying the following axioms.
\begin{itemize}
\item[(1)] For any $a,b\in G_\lambda$, we have $a*b=b^{-1}ab$.
\item[(2)] For any $x\in X$, $a,b\in G_\lambda$, we have $x*e_\lambda=x$ and $x*(ab)=(x*a)*b$, where $e_\lambda$ is the identity element of $G_\lambda$.
\item[(3)] For any $x,y,z\in X$, we have $(x*y)*z=(x*z)*(y*z)$ (self-distributivity).
\item[(4)] For any $x\in X$, $a,b\in G_\lambda$, 
we have $(ab)*x=(a*x)(b*x)$ in some group $G_\mu$.
\end{itemize}
}
\end{definition}

We call the group $G_\lambda$ a \textit{component} of the MCQ.
An MCQ is a type of quandle that can be decomposed as a union of groups, and the quandle operation in each component is given by conjugation.
Moreover, there are compatibilities, (2) and (4),  between the group and quandle operations.
In \cite{Ishii08}, concrete examples of MCQs are presented.

\begin{example}\label{ex:MCQ}
{\rm 
Let $X=\sqcup_{\lambda \in \Lambda} G_\lambda$ be an MCQ with a quandle operation $*$. 
Let ${\mathbb k}[X]$ be the free ${\mathbb k}$-module generated by $X$.
Define a multiplication $\mu : {\mathbb k}[X] \otimes {\mathbb k}[X] \rightarrow {\mathbb k}[X]$
on generators by $\mu (x\otimes y):= x y $ if $x, y \in G_\lambda$ for some $\lambda \in \Lambda$, 
and 
$\mu (x\otimes y) :=0$ otherwise, and extended linearly.
Define $R: {\mathbb k}[X]\otimes {\mathbb k}[X] \rightarrow {\mathbb k}[X] \otimes {\mathbb k}[X]$
on generators $x, y \in X$ by $R(x \otimes y):=  y \otimes (x*y)$ and extended linearly.
Then ${\mathbb k}[X]$ is a braided algebra, as we now proceed to show.
From the construction, it is seen that $R$ is a YBO.
If two of $x, y, z \in X$ belong to  distinct $G_\lambda$'s, then the triple product is $0$, 
and if all belong to the same $G_\lambda$ then it is associative, hence 
$\mu$ is associative.
Equations~\eqref{eqn:YI} and \eqref{eqn:IY}, respectively, follow from the conditions 
 $(ab)*x=(a*x)(b*x)$ and $x*(ab)=(x*a)*b$ 
 for  any $x\in X$, $a,b\in G_\lambda$ on generators, and if $a, b $ belong to distinct $G_\lambda$ then
 the values are $0$ for both sides of the equations, hence ${\mathbb k}[X]$ is a YI and IY algebra.
}
\end{example}

Another type of discrete racks with partial multiplication is a rack constructed from a {\it group heap}.
The following construction is found in  \cite{SZframedlinks, SZsfceribbon}.
Let $G$ be a group, and consider the ternary operation on $G$ defined by 
$T(x,y,z):=xy^{-1}z$. A group $G$ with this operation is called a heap, and $T$ satisfies the ternary 
self-distributive property,
$$T( T(x,y,z), u, v) =T( T(x, u,v), T(y,u,v), T(z,u,v)). $$
By this property, it follows that the binary operation defined on $X=G \times G$ by
$(x,y)* (u,v) := ( T(x,u,v), T(y, u, v))=(xu^{-1}v, yu^{-1}v)$ is self-distributive.
 Property (2) above is also satisfied, and $(X, *)$ is a rack.
An argument similar to the
 one in Example~\ref{ex:MCQ}
gives the following.

\begin{example}
	{\rm 
Let $G$ be a group heap, and 
let $(X, *)$, $X= G \times G$, be a rack defined above. 
Let ${\mathbb k}[X]$ be the free abelian group generated by $X$.
Define a multiplication $\mu : {\mathbb k}[X] \otimes {\mathbb k}[X] \rightarrow {\mathbb k}[X]$
on generators $(x,y), (u,v) \in X$ 
by $\mu ((x,y)\otimes (u,v)):=  (x,v)$ if $y=u$, and
$\mu (x\otimes y) :=0$ otherwise, and extended linearly.
Define $R: {\mathbb k}[X]\otimes {\mathbb k}[X] \rightarrow {\mathbb k}[X] \otimes {\mathbb k}[X]$
on generators $(x,y), (u,v) \in X$ by 
$$R((x,y) \otimes (u,v) ):=  (u, v) \otimes   [ (x,y)*(u,v) ] =
(u, v) \otimes (xu^{-1}v, yu^{-1}v)$$
 and extended linearly.
Then ${\mathbb k}[X]$ is  a braided algebra.
}
\end{example}

{
\subsection{Brief overview of Hopf algebras}

Before giving the constructions that constitute the main examples of braided algebras considered in this article, we briefly recall the notion of Hopf algebra. We also recall some of the properties of Hopf algebras needed to apply the results of \cite{SZbrfrob}, which we follow in the exposition of this subsection. 

A {\it Hopf algebra} $(X, \mu,  \eta,  \Delta, \epsilon, S)$ (a 
module over a unital ring 
$\mathbb k$,
multiplication, unit, comultiplication, counit, antipode,  respectively), is
defined as follows. 
First, recall that a bialgebra 
$X$  
is a module endowed with
a multiplication $\mu: X\otimes X\rightarrow X$ with unit $\eta$ and a comultiplication $\Delta: X\rightarrow X\otimes X$ with counit $\epsilon$ such that the compatibility conditions  
$$\Delta \circ  \mu = (\mu\otimes \mu)\circ ( {\mathbb 1}\otimes  \tau\otimes  {\mathbb 1}) \circ (\Delta\otimes \Delta),$$
$$ \epsilon \mu = \epsilon\otimes \epsilon,\ \ \ \Delta\eta = \eta\otimes \eta,\ \ \ \epsilon \eta = \mathbb 1
		$$
hold, where $\tau$ denotes the transposition $\tau(x \otimes y) = y\otimes x$ for simple tensors.  A Hopf algebra is a bialgebra endowed with a map $S: X\rightarrow X$, called {\it antipode}, satisfying the equations 
$$\mu \circ (\mathbb 1\otimes S)\circ \Delta = \eta \circ \epsilon = \mu\circ (S\otimes \mathbb 1)\circ\Delta,$$
 called the {\it antipode condition}. Antipodes are antihomomorphisms.

Any Hopf algebra satisfies the equality 
$S \circ \mu \circ \tau = \mu \circ (S \otimes S)$.
A Hopf algebra is called {\it involutory} if $S^2={\mathbb 1}$, the identity. 
It is known, \cite{Kas} Theorem~III.3.4, that if a Hopf algebra is commutative or cocommutative it follows that it is also involutory.

For the comultiplication, we use Sweedler's notation $\Delta(x)=x^{(1)}\otimes x^{(2)}$ suppressing the summation. Further, we use
$$( \Delta \otimes {\mathbb 1} ) \Delta (x) = ( x^{(11)} \otimes x^{(12)} ) \otimes x^{(2)}\quad {\rm and} \quad
( {\mathbb 1}  \otimes \Delta ) \Delta (x) =  x^{(1)} \otimes ( x^{(21)} \otimes x^{(22)} ) , $$
both of which are also written as 
$ x^{(1)} \otimes x^{(2)}  \otimes x^{(3)}$ from the coassociativity. 

A  left integral of $X$ is an element $\lambda\in X$ such that $x\lambda = \epsilon (x) \lambda$ for all $x\in X$,
where juxtaposition of elements denotes multiplication applied.
 Right integrals and (two-sided) integrals, are defined similarly.      
The existence of integrals is a fundamental tool to endow a Hopf algebra with a Frobenius structure. It is known that the set of integrals of a free finite dimensional 
Hopf algebra over a PID 
admits a one dimensional space of integrals, 
see \cite{LS}. More generally, a finitely generated projective Hopf algebra over a ring admits a left integral space of rank one \cite{Par}. Observe that when a Hopf algebra is (co)commutative, it follows that a left integral is also a right integral.
}

\subsection{ Braided algebras from Hopf algebras}\label{sec:Hopf_example}

We now describe an important class of braided algebras related to MCQs derived from Hopf algebras. First, recall \cite{EZ} that given a Hopf algebra $H$, one has a Yang-Baxter operator $R_H$ associated to it defined through the right adjoint action of $H$ on itself, which is a generalization of the notion of conjugation in a group:
$$
R_H: x\otimes y\mapsto y^{(1)}\otimes S(y^{(2)})xy^{(3)},
$$
where juxtaposition here indicates multiplication in $H$, and we have employed Sweedler's notation. Observe that when $H$ is the group algebra of a group $G$, where $S(g) = g^{-1}$ for all $g\in G$, then  the YB operator $R_H$ coincides with the YB operator obtained by linearizing the group $G$ and endowing the corresponding group algebra with the standard Hopf algebra structure, where the comultiplication is diagonal and the antipode is induced by taking inverses in $G$. 

\begin{lemma}\label{lem:BA}
	Let $(H,\mu,\Delta,\eta,\epsilon. S)$ be a Hopf algebra and let $R_H$ denote the YB operator defined above. Then $(H, \mu, R_H)$ is a braided algebra. 
\end{lemma}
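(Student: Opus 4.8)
The plan is to verify the three conditions that make $(H,\mu,R_H)$ a braided algebra: that $R_H$ solves the YBE, and that $(\mu,R_H)$ satisfies the YI and IY conditions of Equations~\eqref{eqn:YI} and \eqref{eqn:IY}. The first is already available, since $R_H$ is the adjoint Yang--Baxter operator whose YBE is recorded in \cite{EZ}; I would simply invoke it. The substance of the lemma is therefore the two compatibility conditions, each of which I would check by evaluating both sides on a simple tensor $x\otimes y\otimes z\in H^{\otimes 3}$ and comparing, using only that $\Delta$ is an algebra homomorphism, that $S$ is an antihomomorphism, and the antipode and counit axioms recalled above.

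For the YI condition, I would first apply $R_H\otimes\mathbb 1$ and then $\mathbb 1\otimes R_H$ to $x\otimes y\otimes z$, bringing $y$ and then $z$ to the front, and finally contract the first two factors with $\mu\otimes\mathbb 1$; this yields $y^{(1)}z^{(1)}\otimes S(z^{(2)})S(y^{(2)})\,x\,y^{(3)}z^{(3)}$. The right-hand side of \eqref{eqn:YI} sends $x\otimes y\otimes z$ to $R_H(x\otimes yz)=(yz)^{(1)}\otimes S((yz)^{(2)})\,x\,(yz)^{(3)}$. Since $\Delta$ is an algebra map one has $(yz)^{(i)}=y^{(i)}z^{(i)}$, and since $S$ is an antihomomorphism one has $S(y^{(2)}z^{(2)})=S(z^{(2)})S(y^{(2)})$; substituting these reproduces the left-hand side, so \eqref{eqn:YI} holds.

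For the IY condition, I would instead apply $\mathbb 1\otimes R_H$ first and then $R_H\otimes\mathbb 1$. Here $z$ is comultiplied twice, so the bookkeeping is more delicate: after using coassociativity to relabel the Sweedler factors of $z$ consistently, contracting the last two tensor factors with $\mathbb 1\otimes\mu$ leads to an expression of the form $z_{1}\otimes S(z_{2})\,x\,z_{3}\,S(z_{4})\,y\,z_{5}$. The crucial simplification is the interior product $z_{3}S(z_{4})$, which collapses by the antipode axiom $z^{(1)}S(z^{(2)})=\epsilon(z)\mathbb 1$ together with the counit axiom, leaving $z^{(1)}\otimes S(z^{(2)})\,x\,y\,z^{(3)}$. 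This is precisely the image of $x\otimes y\otimes z$ under the right-hand side of \eqref{eqn:IY}, namely $R_H(xy\otimes z)$, so \eqref{eqn:IY} holds as well.

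I expect the IY computation to be the main obstacle, though purely as bookkeeping. In the YI case the two applications of $R_H$ attach their comultiplications to different tensor factors and never nest, so the Sweedler indices stay simple. In the IY case applying $R_H$ twice forces a nested comultiplication of $z$, and one must invoke coassociativity to relabel these factors before the antipode condition can cancel the interior $z_{3}S(z_{4})$. Once that relabeling is set up carefully the two verifications are routine diagram chases, and together with the cited YBE they establish that $(H,\mu,R_H)$ is a braided algebra.
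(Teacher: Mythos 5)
Your proposal is correct and takes essentially the same approach as the paper: the YBE for $R_H$ is cited from \cite{EZ}, and the compatibility conditions are verified on simple tensors using the Hopf algebra axioms, with your unnested computation producing $y^{(1)}z^{(1)}\otimes S(z^{(2)})S(y^{(2)})xy^{(3)}z^{(3)}$ being exactly the one displayed in the paper's proof. The only difference is one of completeness: the paper carries out just that single computation and dismisses the other condition as ``a similar procedure,'' whereas you also work through the nested case and correctly note that it is not merely similar, since it requires coassociative relabeling of the Sweedler factors of $z$ and the antipode--counit collapse of the interior product $z^{(3)}S(z^{(4)})$, tools the unnested case never invokes.
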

\begin{proof}
	We show that Equation~\eqref{eqn:IY} 
	holds. 
	A similar procedure can be performed for Equation~\eqref{eqn:YI}. 
	For the left hand side of Equation~\eqref{eqn:IY} 
	we have
	\begin{eqnarray*}
	(\mu\otimes \mathbb 1)(\mathbb 1\otimes R_H)(R_H\otimes \mathbb 1)(x\otimes y\otimes z) &=&
	(\mu\otimes \mathbb 1)(\mathbb 1\otimes R_H)(y^{(1)}\otimes S(y^{(2)})xy^{(3)}\otimes z)\\
	&=& (\mu\otimes \mathbb 1)(y^{(1)}\otimes z^{(1)}\otimes S(z^{(2)})S(y^{(2)})xy^{(3)}z^{(3)})\\
	&=& y^{(1)}z^{(1)}\otimes S(z^{(2)})S(y^{(2)})xy^{(3)}z^{(3)}
	\end{eqnarray*}
while for the right hand side of Equation~\eqref{eqn:IY} 
we have
\begin{eqnarray*}
R_H(\mathbb 1\otimes \mu)(x\otimes y\otimes z) &=& R_H(x\otimes yz)\\
&=& (yz)^{(1)}\otimes S((yz)^{(2)})x(yz)^{(3)},
\end{eqnarray*}
 The two expressions are easily seen to coincide, using the fact that $\mu$ and $\Delta$ satisfy the Hopf algebra axioms, and the fact that $S$ is a antihomomorphism of algebras. 
\end{proof}

\subsection{Braided Frobenius algebras from (co-)commutative Hopf algebras}

In \cite{SZbrfrob},  {\it braided Frobenius algebras} (i.e. a class of braided algebras) were constructed from 
   commutative and cocommutative Hopf algebras. 
   This is a Hopf algebra version of the heap construction described above.
   We briefly review the construction.

Let $(X, m,\eta,\Delta,\epsilon,S)$ be a commutative and cocommutative Hopf algebra,
where the listed data are multiplication, unit, comultiplication, counit, and the antipode,
respectively. 
Then $V=X \otimes X$ was given a braided Frobenius algebra structure, in particular braided algebra, 
with the following YB operator and the multiplication. 

The YB operator is defined as follows. First, introduce $T: X\otimes X\otimes X\rightarrow X$ by letting $T(x \otimes y \otimes z) $ be defined on simple tensors as
$T(x \otimes y \otimes z) :=x S(y) z$, where juxtaposition indicates, as before, Hopf algebra multiplication $m$. Note that this corresponds to the operation $xy^{-1}z$ for  group heaps.
Then the map $R: V^{\otimes 2} \rightarrow  V^{\otimes 2}$ is defined for 
simple tensors
 by 
$$ R ( ( x\otimes y)  \otimes ( z \otimes w)  ) := 
( z^{(1)} \otimes w^{(1)} ) \otimes 
T(x \otimes z^{(2)} \otimes w^{(2)} ) \otimes
T(y  \otimes z^{(3)} \otimes w^{(3)} ) .$$
It was shown that this $R$ indeed satisfies the YBE.

The multiplication $\mu$ on $V \otimes V$ was defined as follows.
There exists an integral $\lambda: V \rightarrow {\mathbb k}$ in a Hopf algebra in question, and 
the map $\cup: V \otimes V \rightarrow {\mathbb k}$ is defined by
$\cup:=\lambda m ( {\mathbb 1} \otimes S)$. 
Then the multiplication is defined by $\mu = {\mathbb 1} \otimes \cup \otimes {\mathbb 1} $.
It was shown in \cite{SZbrfrob} that these $R$ and multiplication give rise to braided algebras.

In summary, in this section,  definitions of YI condition (Equation~\eqref{eqn:YI}), IY condition (Equation~\eqref{eqn:IY}), braided algebras, and concrete examples of such structures
 have been
presented. In the following sections we propose (co)homology theories for these structures 
that unify both Hochschild and Yang-Baxter homology theories.

\section{Yang-Baxter Hochschild cohomology up to dimension 3 and deformations} \label{sec:deform}

In this section we propose low dimensional Yang-Baxter Hochschild cohomology from point of view of deformation theory, and prove the classification theorem for deformations up to equivalence. The main result of this section is Theorem~\ref{thm:classification}, which motivates the definition of Yang-Baxter Hochschild chomology of this article, as the algebraic tool controlling infinitesimal deformations.

\subsection{Hochschild cohomology}

In this section we briefly review deformation theoretic aspects of low dimensional 
Hochschild cohomology for the simplest case.
Let $(V, \mu)$ be an associative algebra over ${\mathbb k}$.
All homomorphism groups,  ${\rm Hom}_{\mathbb k}$ over ${\mathbb k}$, are denoted by 
${\rm Hom}$ for simplicity.
The cochain groups of Hochschild cohomology are defined to be  $C_{\rm H}^n(V,V)={\rm Hom}(V^{\otimes n},V)$ for $n\geq 0$. 

 The differential in degree zero is given by
	$$
	\delta^0_{\rm H}(s) = \mu(\mathbb 1\otimes s) - \mu(s\otimes \mathbb 1),
	$$
	for $s : \mathbb k \rightarrow V$.

The differentials $ \delta^1_{\rm H}$ and $\delta^2_{\rm H}$ are defined for $f \in C_{\rm H}^1(V,V)$ and $\psi \in C_{\rm H}^2(V,V)$ by 
\begin{eqnarray*}
\delta^1_{\rm H}(f)&=&
  \mu ( f \otimes {\mathbb 1}) + \mu  ( {\mathbb 1} \otimes  f ) 
  - f \mu , \\
\delta^2_{\rm H}(\psi)&=&  
\mu (\psi \otimes {\mathbb 1}) + \psi (\mu \otimes {\mathbb 1})
- \mu ( {\mathbb 1}\otimes  \psi ) - \psi ({\mathbb 1}\otimes   \mu ) .
\end{eqnarray*}
		We note that usual differential for Hochschild cohomology has the opposite sign of $\delta^2_{\rm H}$ written above. 
		This convention is for convenience of computations of deformations and diagrams that follow, and also similar to \cite{CCES-coalgebra}. 
		This does not have any implication in the results that follow.

Diagrammatic presentations of the differentials are depicted in Figures~\ref{Hochdiff1} and \ref{Hochdiff2}. 
An  array of 
 vertical $n$ edges represents $V^{\otimes n}$, and a  circle on an edge represents $f: V \rightarrow V$.
The diagrams are read from top to bottom, in the direction of a homomorphism.
A trivalent vertex  represents a multiplication $\mu: V ^{\otimes 2} \rightarrow V$, 
and a circled trivalent vertex represent $\psi$. 

\begin{figure}[htb]
\begin{center}
\includegraphics[width=1.5in]{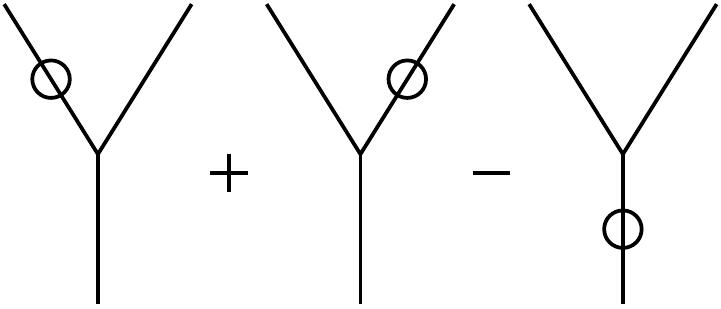}
\end{center}
\caption{}
\label{Hochdiff1}
\end{figure}

\begin{figure}[htb]
\begin{center}
\includegraphics[width=2.5in]{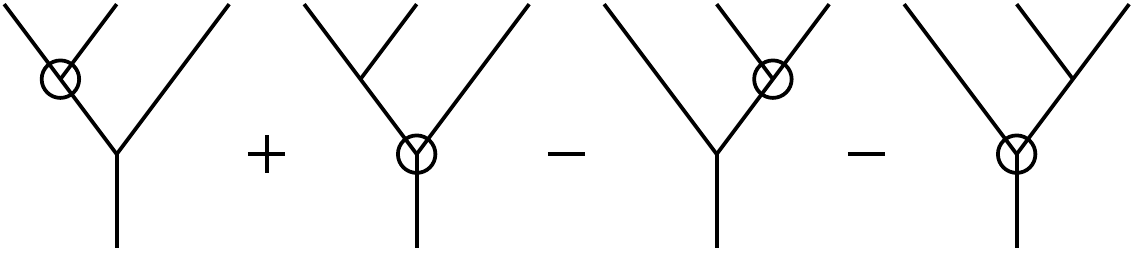}
\end{center}
\caption{}
\label{Hochdiff2}
\end{figure}

	Let $(V,\mu)$ be an algebra over $\mathbb k$.  
We say that an  algebra $(V',\mu')$ over the power series $\mathbb k[[\hbar]]$
 is a deformation of $V$ if the quotient algebra $(V' / (\hbar) V' , \mu')$ coincides with 
 $(V, \mu)$. 

	Let $(V,\mu)$ be an associative algebra, 
	$\tilde V=V \otimes_{\mathbb k} {\mathbb k}[[\hbar]]/(\hbar^2 ) \cong V \oplus \hbar V$,
	and  $\psi \in Z^2_{\rm H}(V,V)$. 
 Then, setting $\tilde \mu = \mu + \hbar \psi$,
  $(\tilde V,\tilde \mu)$ is an associative algebra if and only if the equations
	$\delta^2_{\rm H}( \psi)=0$ hold. 
 This is seen by computing the associativity  in $\tilde V$,
\begin{eqnarray*}
\tilde \mu (\tilde \mu \otimes {\mathbb 1} )& =&  
( \mu + \hbar \psi ) ( (\mu  + \hbar \psi  ) \otimes {\mathbb 1} ) \\
&=&
 \mu ( \mu \otimes {\mathbb 1} ) + \hbar [ \mu ( \psi   \otimes {\mathbb 1} )+
 \psi ( \mu  \otimes {\mathbb 1} ) ] , \\ 
\tilde \mu ({\mathbb 1}  \otimes \tilde \mu ) &=& 
 ( \mu + \hbar \psi ) ( {\mathbb 1}  \otimes  (\mu  + \hbar \psi  )) \\
 &=&
 \mu ({\mathbb 1}  \otimes  \mu ) + \hbar[ 
 \mu (  {\mathbb 1}   \otimes  \psi)+
  \psi (  {\mathbb 1}  \otimes \mu ) ] . 
  \end{eqnarray*}

In this case, we also have $\tilde \mu \equiv_{(\hbar)} \mu$, meaning that $\tilde \mu$ coincides with $\mu$ modulo $(\hbar)$, and we say that $(\tilde V, \tilde \mu)$ is an infinitesimal deformation   
of $(V, \mu)$. 
 Thus we say  that the primary obstruction to the {\it infinitesimal} deformation vanishes if and only if $\delta^2_{\rm H}(\psi)=0$.

\subsection{Yang-Baxter cohomology up to dimension 3}

In this section we define Yang-Baxter (deformation) cohomology.
Let $(V, R)$ be a ${\mathbb k}$-module with the YB operator $R : V^{\otimes 2} \rightarrow V^{\otimes 2}$.
The cochain groups are defined by 
$C^0_{\rm YB}(V,V)=0$ and 
$C^n_{\rm YB}(V,V)={\rm Hom} (V^{\otimes n},V^{\otimes n})$ for $n>0$.
We define the differentials for $f \in C^1_{\rm YB}(V,V) $
and $\phi \in C^2_{\rm YB}(V,V)$ by 
  \begin{eqnarray*}
 \delta^1_{\rm YB} (f) 
 &=&
 R ( f \otimes {\mathbb 1}) + R ( {\mathbb 1} \otimes  f ) 
 -  ( f \otimes {\mathbb 1}) R - ( {\mathbb 1} \otimes  f )  R ,\\
\delta^2_{\rm YB} (\phi) &=&
(R \otimes {\mathbb 1} ) ( {\mathbb 1}  \otimes R ) ( \phi \otimes  {\mathbb 1} )
+ (R \otimes {\mathbb 1} ) ( {\mathbb 1}  \otimes \phi ) ( R \otimes  {\mathbb 1} )
+  (\phi \otimes {\mathbb 1} ) ( {\mathbb 1}  \otimes R ) ( R \otimes  {\mathbb 1} ) \\
& & -  ( {\mathbb 1}  \otimes R ) ( R \otimes  {\mathbb 1} ) ( {\mathbb 1}  \otimes \phi ) 
- ( {\mathbb 1}  \otimes R ) ( \phi \otimes  {\mathbb 1} ) ( {\mathbb 1}  \otimes  R ) 
- ( {\mathbb 1}  \otimes \phi ) ( R \otimes  {\mathbb 1} ) ( {\mathbb 1}  \otimes  R ) .
\end{eqnarray*}
The differentials are depicted in Figures~\ref{YBdiff1} and \ref{YBdiff2}.
The cochains $f$ and $\phi$ are represented by circles on an edge and a crossing, 
respectively. 

\begin{figure}[htb]
\begin{center}
\includegraphics[width=2.2in]{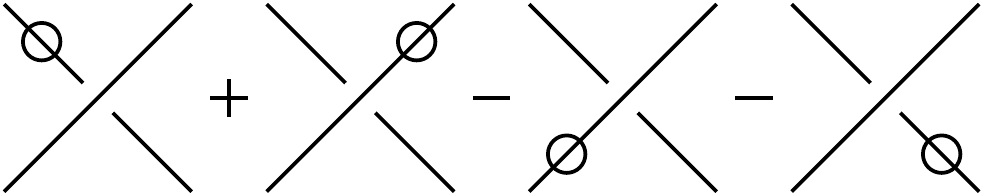}
\end{center}
\caption{}
\label{YBdiff1}
\end{figure}

\begin{figure}[htb]
\begin{center}
\includegraphics[width=3.5in]{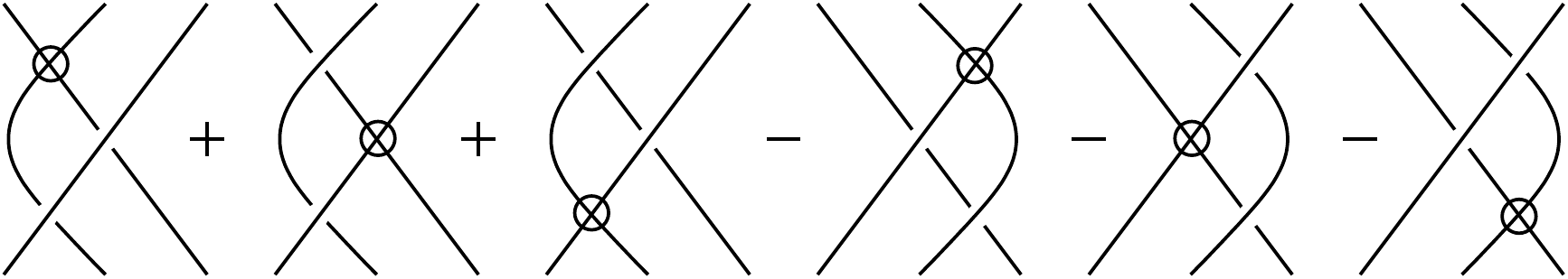}
\end{center}
\caption{}
\label{YBdiff2}
\end{figure}

\begin{lemma} \label{lem:R}
The sequence
$C^0_{\rm YB}(V,V) \rightarrow 
C^1_{\rm YB}(V,V) \stackrel{\delta^1_{\rm YB}}{\longrightarrow}
C^2_{\rm YB}(V,V) \stackrel{\delta^2_{\rm YB}}{\longrightarrow}
C^2_{\rm YB}(V,V)
$
defines a cochain complex.
\end{lemma}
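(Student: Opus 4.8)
The plan is to verify the two defining properties of a cochain complex: that consecutive differentials land in the correct groups, and that their composite vanishes. The degree bookkeeping is immediate, since $\delta^1_{\rm YB}$ sends $C^1_{\rm YB}(V,V)=\Hom(V,V)$ into $C^2_{\rm YB}(V,V)=\Hom(V^{\otimes 2},V^{\otimes 2})$, while each summand of $\delta^2_{\rm YB}(\phi)$ is a composite of maps $V^{\otimes 3}\to V^{\otimes 3}$, so $\delta^2_{\rm YB}$ maps $C^2_{\rm YB}(V,V)$ into $\Hom(V^{\otimes 3},V^{\otimes 3})=C^3_{\rm YB}(V,V)$, which is the correct target. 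Because $C^0_{\rm YB}(V,V)=0$, the only relation that requires proof is $\delta^2_{\rm YB}\circ\delta^1_{\rm YB}=0$.

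My preferred approach is conceptual rather than computational, exploiting the deformation-theoretic meaning of the two differentials. First I would set $D_f := f\otimes \mathbb 1 + \mathbb 1\otimes f \in {\rm End}(V^{\otimes 2})$ and rewrite the first differential as a commutator,
$$\delta^1_{\rm YB}(f) = R\,D_f - D_f\,R = [R,D_f].$$
Next I would observe that $\delta^2_{\rm YB}(\phi)$ is exactly the \emph{linearized} Yang--Baxter expression: expanding the YBE for $R+\hbar\phi$ modulo $\hbar^2$ and collecting the coefficient of $\hbar$, the three terms from the left side give the positive summands of $\delta^2_{\rm YB}(\phi)$ and the three from the right side give the negative summands. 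Thus $\delta^2_{\rm YB}(\phi)=0$ is equivalent to the assertion that $R+\hbar\phi$ satisfies the YBE to first order in $\hbar$.

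The key step is then a conjugation argument. Work in ${\rm End}(V)\otimes \mathbb k[\hbar]/(\hbar^2)$ and put $g=\mathbb 1+\hbar f$, which is invertible with $g^{-1}=\mathbb 1-\hbar f$; no analytic input is needed since this inverse is explicit over any commutative ring $\mathbb k$. Conjugation preserves the YBE, because $g^{\otimes 3}(R\otimes \mathbb 1)(g^{\otimes 3})^{-1}=R_\hbar\otimes \mathbb 1$ and $g^{\otimes 3}(\mathbb 1\otimes R)(g^{\otimes 3})^{-1}=\mathbb 1\otimes R_\hbar$, where $R_\hbar := (g\otimes g)\,R\,(g\otimes g)^{-1}$, so that $R_\hbar$ solves the YBE exactly (modulo $\hbar^2$). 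A first-order computation, using $g\otimes g = \mathbb 1^{\otimes 2}+\hbar D_f \pmod{\hbar^2}$, gives
$$R_\hbar = R - \hbar\,[R,D_f] = R - \hbar\,\delta^1_{\rm YB}(f) \pmod{\hbar^2}.$$
Hence $\phi=-\delta^1_{\rm YB}(f)$ satisfies the YBE to first order, which by the previous paragraph means $\delta^2_{\rm YB}\bigl(-\delta^1_{\rm YB}(f)\bigr)=0$; by linearity of $\delta^2_{\rm YB}$ this yields $\delta^2_{\rm YB}(\delta^1_{\rm YB}(f))=0$, as required.

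The main obstacle is really only to make the linearization precise, and that is purely bookkeeping through the formal parameter $\hbar$. A fully computational alternative would substitute $\phi=\delta^1_{\rm YB}(f)$ directly into $\delta^2_{\rm YB}$, producing twenty-four terms, and cancel them in pairs using the YBE for $R$ together with the interchange relations for maps supported on disjoint tensor factors, such as $(R\otimes \mathbb 1)(\mathbb 1\otimes \mathbb 1\otimes f)=(\mathbb 1\otimes \mathbb 1\otimes f)(R\otimes \mathbb 1)$. That route is routine but error-prone, which is why I would present the conjugation argument and relegate the term-by-term cancellation to a diagrammatic check as in Figures~\ref{YBdiff1} and \ref{YBdiff2}.
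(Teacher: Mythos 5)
Your proof is correct, but it takes a genuinely different route from the paper. The paper proves $\delta^2_{\rm YB}\circ\delta^1_{\rm YB}=0$ by the direct, diagrammatic route you relegate to a footnote: substituting $\phi=\delta^1_{\rm YB}(f)$ into $\delta^2_{\rm YB}$, representing each resulting term by a circle (for $f$) placed on one of the edges adjacent to a crossing, and cancelling terms in pairs --- interior circles cancel because each edge is shared by two crossings, and circles on end edges cancel after applying the YBE. Your argument instead packages the coboundary as a commutator $\delta^1_{\rm YB}(f)=[R,D_f]$ and observes that the conjugate $R_\hbar=(g\otimes g)R(g\otimes g)^{-1}$, $g=\mathbb 1+\hbar f$, solves the YBE exactly over $\mathbb k[\hbar]/(\hbar^2)$ because conjugation by $g^{\otimes 3}$ preserves the YBE, while its first-order term is $-\delta^1_{\rm YB}(f)$; vanishing of the $\hbar$-coefficient then gives $\delta^2_{\rm YB}(\delta^1_{\rm YB}(f))=0$ by linearity. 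All the steps check out: the identification $g^{\otimes 3}(R\otimes\mathbb 1)(g^{\otimes 3})^{-1}=R_\hbar\otimes\mathbb 1$ is valid since $g$ cancels against $g^{-1}$ on the factor carrying the identity, and the equivalence ``$\delta^2_{\rm YB}(\phi)=0$ iff $R+\hbar\phi$ satisfies the YBE modulo $\hbar^2$'' is exactly the linearization the paper itself records in the remark of Section~\ref{sec:infinitesimal_deform}. What your approach buys is conceptual clarity: it explains $\delta^2\circ\delta^1=0$ as the statement that infinitesimally trivial (gauge) deformations are deformations, which is the same mechanism the paper later exploits in Theorem~\ref{thm:classification} with the map $\tilde f=\mathbb 1+\hbar f$; it also avoids the twenty-four-term bookkeeping entirely. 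What the paper's approach buys is the diagrammatic cancellation machinery itself (circles migrating across crossings, edges shared by two vertices), which is not a throwaway device but the engine reused for the harder mixed differentials in Proposition~\ref{pro:cochain} and the third-cohomology computations in the appendices, where no clean conjugation argument is available. One cosmetic slip: you write ``${\rm End}(V)\otimes\mathbb k[\hbar]/(\hbar^2)$'' where you need endomorphisms of the various tensor powers $V^{\otimes n}$ extended to $\mathbb k[\hbar]/(\hbar^2)$, and equality of coefficients there uses that $\mathbb k[\hbar]/(\hbar^2)$ is free over $\mathbb k$ with basis $\{1,\hbar\}$; this is routine but worth stating since $\mathbb k$ is only assumed to be a commutative ring.
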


\begin{proof}
Let $f$ be a YB $1$-cochain. Then, the lemma
is proved by showing that $\delta^2_{\rm YB}(\phi)=0$ when $\phi=\delta^1_{\rm YB}(f)$. 
Diagrammatically, when $\delta^1_{\rm YB}(f)$ is substituted in $\phi$, 
the circles crossing representing $\phi$ are replaced by four terms with circles placed at the four edges 
adjacent to the crossing. For example, the first term of $\delta^2_{\rm YB}(\phi)$ is shown in 
Figure~\ref{YBdiff1diff2} left top, and the replacement representing the substitution
is depicted in the right of the top row. 
Similar diagrams are shown in the middle row for the second term of $\delta^2_{\rm YB}(\phi)$,
and  the bottom row in Figure 6 comes from the fifth summand in Figure 5. 
One sees that the terms labeled (B) and (C) in the top and the middle row cancel with opposite signs. 
This cancelation principle is explained by the fact that each edge is shared by two crossings.
In Figure~\ref{YBdiff1diff2}, the terms labeled (A) in the top row and (D) in the bottom row
have  the circles for $f$  placed at the end edge (top).
They are canceled by opposite signs after applying the YBE (in diagrammatic form).
Hence in both cases (the circle placed inside or end edges), 
two terms cancel with opposite signs, and we obtain  $\delta^2_{\rm YB}(\phi)=0$.
\end{proof}

\begin{figure}[htb]
\begin{center}
\includegraphics[width=3in]{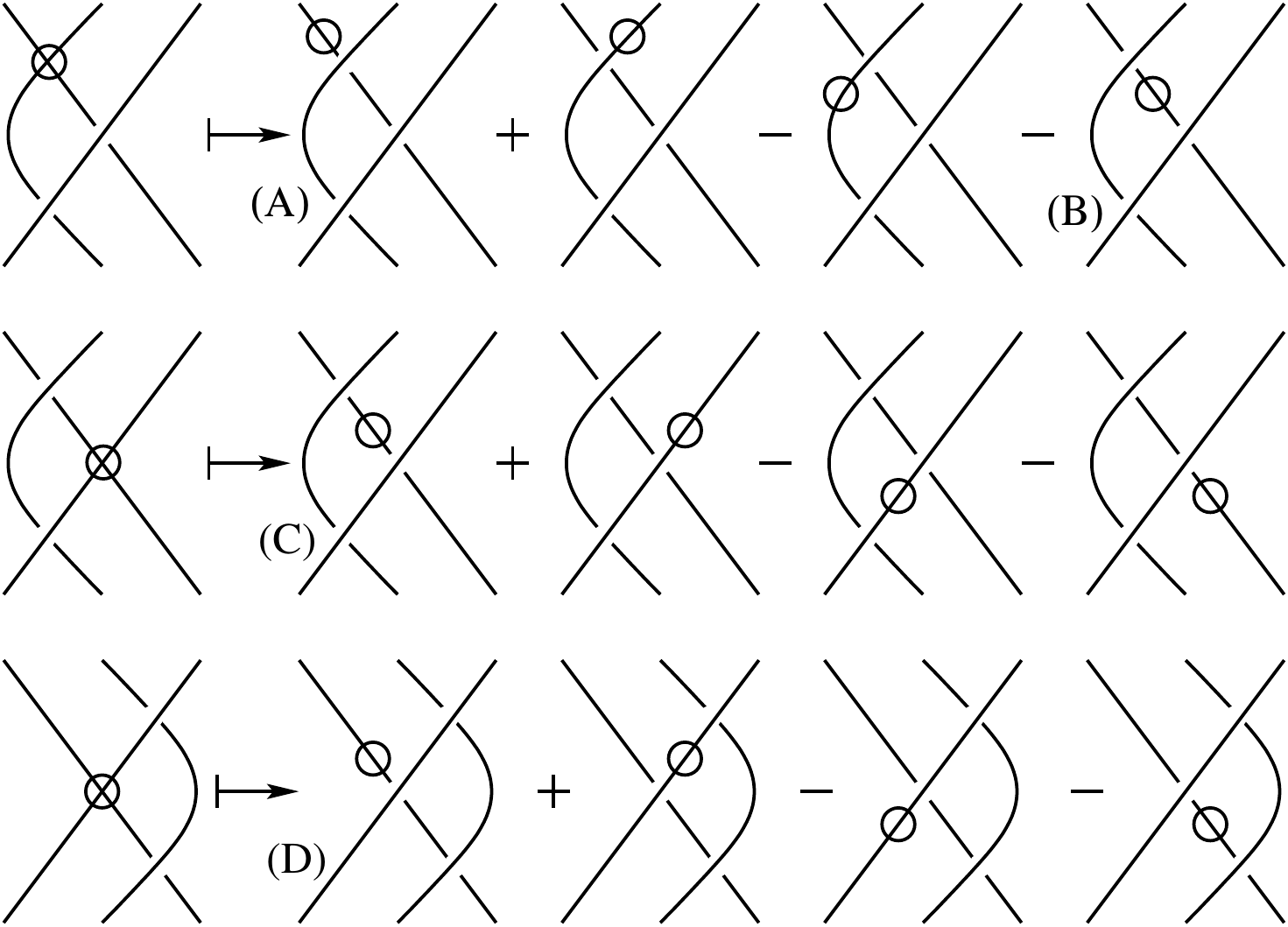}
\end{center}
\caption{}
\label{YBdiff1diff2}
\end{figure}

\begin{remark} 
{\rm

A deformation cohomology theory for Yang-Baxter operators was developed further to all higher dimensions  in \cite{SZ-BC}
extending those in this paper, using the diagrammatic method  developed above.

This YB cohomology theory    is different from those  in  \cite{Eisermann};
the  differentials in  \cite{Eisermann}
differ from ours by compositions of YB operators.
For example, the 2-cochain $c'$ in \cite{Eisermann} is written as $Rc$ for a 2-cochain $c$ in the cohomology considered in this article.
We adopt our definition and diagrammatics 
along the line of \cite{CCES-coalgebra,CCES-adjoint}
for the purpose of unifying Yang-Baxter cohomology with algebra cohomology.

}
\end{remark}

%
%
%

\subsection{Yang-Baxter Hochschild cohomology up to dimension $3$}

Let $V$ be a braided algebra with coefficient unital ring ${\mathbb k}$.
We define the cochain groups for a braided algebra $V$ with coefficients in itself up to degree $3$ as follows. 
We set $C_{\rm YBH}^0(V,V) =0$,
and  
$C^{n,k}_{\rm YBH}(V,V) = {\rm Hom}(V^{\otimes n},V^{\otimes k}) $
for $n , k >0$.
We also use different subscripts 
$$C^{n,k}_{\rm YI}(V,V) =  {\rm Hom}(V^{\otimes n},V^{\otimes k})
= C^{n,k}_{\rm IY}(V,V)$$
to distinguish different isomorphic direct summands. 
Define  
\begin{eqnarray*}
C_{\rm YBH}^1(V,V) &=& C^{1,1}_{\rm YBH}(V,V) ={\rm Hom}(V,V) , \\
  C^{2}_{\rm YBH}(V,V) &=& C^{2,2}_{\rm YBH}(V,V) \oplus C^{2,1}_{\rm YBH}(V,V), \quad {\rm and} \\
C^{3}_{\rm YBH}(V,V)  &=& C^{3,3}_{\rm YBH}(V,V)  \oplus
 C^{3,2}_{\rm YI}(V,V)  \oplus C^{3,2}_{\rm IY}(V,V) 
 \oplus  
 C^{3,1}_{\rm YBH}(V,V) . 
 \end{eqnarray*}

 We define differentials as follows.
 We set $\delta^1_{\rm YBH}$ to be the direct sum $\delta^1_{\rm YB}\oplus \delta^1_{\rm H}$.

   The second differential $\delta^2_{\rm YBH}$ is defined to be the direct sum of four terms $\delta^2_{\rm YB}\oplus \delta^2_{\rm YI}\oplus \delta^2_{\rm IY}\oplus \delta^2_{\rm H}$ where $\delta^2_{\rm YB}$ and $\delta^2_{\rm H}$ map in the first ($C^{3,3}_{\rm YBH}(V,V) ={\rm Hom}(V^{\otimes 3},V^{\otimes 3})$) 
  and last $(C^{3,1}_{\rm YBH}(V,V) = {\rm Hom}(V^{\otimes 3},V)$) direct summands of $C^3_{\rm YBH}(V,V)$, respectively,
  while $\delta^2_{\rm YI}$ and $\delta^2_{\rm IY}$ map to the middle  
two direct summands $C^{3,2}_{\rm YI}(V,V)$ and $C^{3,2}_{\rm IY}(V,V)$, respectively.
  Each differential is defined as follows.
  \begin{eqnarray*}
\delta^2_{\rm YI} (\phi\oplus \psi) &=&
 (\mathbb 1\otimes \psi)(R\otimes \mathbb 1)(\mathbb 1 \otimes R) + (\mathbb 1 \otimes \mu)(\phi\otimes \mathbb 1)(\mathbb 1\otimes R) \\
 & &
 + (\mathbb 1 \otimes \mu)(R\otimes \mathbb 1 )(\mathbb 1 \otimes \phi) 
-  R (\psi\otimes \mathbb 1 ) - \phi (\mu\otimes \mathbb 1). \\
\delta^2_{\rm IY}(\phi\oplus \psi) &=& 
(\psi\otimes \mathbb 1)(\mathbb 1\otimes R)(R\otimes \mathbb 1) + (\mu\otimes \mathbb 1)(\mathbb 1\otimes \phi)(R\otimes \mathbb 1) \\
& &  
+ (\mu\otimes \mathbb 1)(\mathbb 1 \otimes R)(\phi\otimes \mathbb 1)
- R(\mathbb 1 \otimes \psi)  - \phi (\mathbb 1\otimes \mu) .
\end{eqnarray*}
The differential $\delta^2_{\rm IY}(\phi\oplus \psi)$ is represented diagrammatically in Figure~\ref{IY2cocy}, where 2-cochains $\phi \in C^{2,2}_{\rm YBH}(V,V)$ and $\psi\in C^{3,1}_{\rm YBH}(V,V)$ are represented by 4-valent (resp. 3-valent) vertices 
with circles. The YI and IY components of the cochain complex above are included to enforce the coherence axioms between deformed algebra structure, and deformed YB operator. In other words, they ensure that YBH $2$-cocycles satisfy Equation~\eqref{eqn:YI} and Equation~\eqref{eqn:IY}, respectively.

\begin{figure}[htb]
\begin{center}
\includegraphics[width=3in]{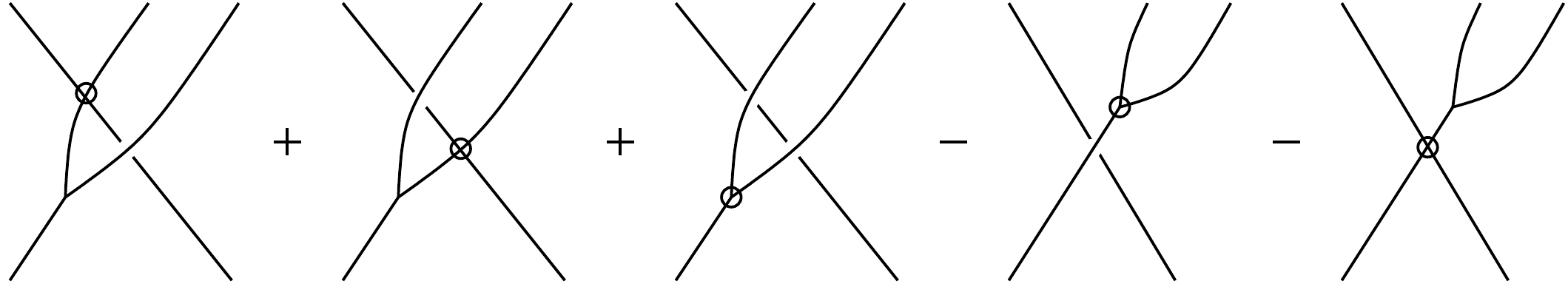}
\end{center}
\caption{}
\label{IY2cocy}
\end{figure}

\begin{proposition}\label{pro:cochain}
The sequence
$$ C_{\rm YBH}^0(V,V) \rightarrow C_{\rm YBH}^1(V,V) 
\stackrel{\delta^1_{\rm YBH}}{\longrightarrow}
C_{\rm YBH}^2(V,V) 
\stackrel{\delta^2_{\rm YBH}}{\longrightarrow}
 C_{\rm YBH}^3(V,V) 
 $$
 defines a cochain complex.
\end{proposition}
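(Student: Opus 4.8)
The plan is to verify the single nontrivial identity $\delta^2_{\rm YBH}\circ\delta^1_{\rm YBH}=0$. Since the only nonzero degree-$1$ cochain group is $C^{1,1}_{\rm YBH}(V,V)={\rm Hom}(V,V)$, I fix a cochain $f\in C^1_{\rm YBH}(V,V)$ and record
$$\phi:=\delta^1_{\rm YB}(f)\in C^{2,2}_{\rm YBH}(V,V),\qquad \psi:=\delta^1_{\rm H}(f)\in C^{2,1}_{\rm YBH}(V,V),$$
so that $\delta^1_{\rm YBH}(f)=\phi\oplus\psi$. The target $C^3_{\rm YBH}(V,V)$ is a direct sum of four summands, so $\delta^2_{\rm YBH}(\phi\oplus\psi)$ splits into four independent components, and it suffices to show each vanishes.

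Two of the four components are already available. The component landing in $C^{3,3}_{\rm YBH}(V,V)$ is precisely $\delta^2_{\rm YB}(\phi)=\delta^2_{\rm YB}(\delta^1_{\rm YB}(f))$, which is zero by Lemma~\ref{lem:R}. The component landing in $C^{3,1}_{\rm YBH}(V,V)$ is $\delta^2_{\rm H}(\psi)=\delta^2_{\rm H}(\delta^1_{\rm H}(f))$, and $\delta^2_{\rm H}\delta^1_{\rm H}=0$ is the classical Hochschild identity, provable by a direct expansion that uses only associativity of $\mu$ (the sign convention adopted here does not affect it). Thus the genuinely new content is the vanishing of the two mixed components $\delta^2_{\rm YI}(\phi\oplus\psi)$ and $\delta^2_{\rm IY}(\phi\oplus\psi)$, which land in $C^{3,2}_{\rm YI}(V,V)$ and $C^{3,2}_{\rm IY}(V,V)$ respectively.

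For the YI component I would substitute the defining formulas for $\phi$ and $\psi$ into the five terms of $\delta^2_{\rm YI}(\phi\oplus\psi)$ and expand. Because $\phi$ and $\psi$ are each linear in $f$ and every summand of $\delta^2_{\rm YI}$ contains exactly one occurrence of $\phi$ or $\psi$, the expansion is a sum of terms each carrying a single circle for $f$ sitting on one of the input or internal strands; organizing by the strand on which $f$ sits is the bookkeeping device, exactly as in the proof of Lemma~\ref{lem:R}. Terms in which $f$ is carried along an edge shared by two consecutive structure maps cancel in pairs with opposite signs, while the terms in which $f$ survives on a free strand are eliminated using the YI relation Equation~\eqref{eqn:YI} and the Yang-Baxter equation for the undeformed pair $(\mu,R)$, with associativity of $\mu$ invoked to pair the contributions coming from $\psi$. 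I would carry this out diagrammatically by decorating the diagram of Figure~\ref{IY2cocy} with a circle. The IY component follows by the mirror-image argument, interchanging the two tensor factors and replacing Equation~\eqref{eqn:YI} by Equation~\eqref{eqn:IY}, so it requires no separate computation.

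The conceptual reason behind all four cancellations is that $\phi\oplus\psi$ is the first-order variation of the braided-algebra structure $(\mu,R)$ under the gauge transformation $\mathbb 1+\hbar f$, and transporting $(\mu,R)$ along this isomorphism again yields a braided algebra; the four vanishing identities are exactly the linearizations of the YBE, associativity, and the YI and IY conditions, all of which are preserved. I expect the main obstacle to be the mixed YI computation (equivalently IY): unlike the pure Yang-Baxter and Hochschild pieces it genuinely couples $\mu$ and $R$, so the cancellation must invoke Equation~\eqref{eqn:YI} and the Yang-Baxter equation simultaneously, and the signs across the five terms must be tracked carefully to confirm that the boundary contributions of $f$ annihilate.
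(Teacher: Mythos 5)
Your proposal is correct and takes essentially the same route as the paper: the paper's proof likewise reduces the problem to the two mixed components (treating the Yang--Baxter and Hochschild components as in Lemma~\ref{lem:R} and the classical theory), and establishes $\delta^2_{\rm YI}(\phi\oplus\psi)=\delta^2_{\rm IY}(\phi\oplus\psi)=0$ by exactly the diagrammatic bookkeeping you describe — substituting $\delta^1(f)$ at each circled vertex, cancelling in pairs the terms where $f$ sits on an edge shared by two structure maps, and cancelling the boundary-edge terms against the decorated terms of the opposite sign using the structural equations. One minor refinement: for the boundary cancellation in each mixed component only the corresponding compatibility relation (Equation~\eqref{eqn:YI}, resp.\ Equation~\eqref{eqn:IY}) of the undeformed pair $(\mu,R)$ is actually required, so your additional appeal to the Yang--Baxter equation and associativity there is harmless but not needed.
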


\begin{proof}
Let $f$ be a YBH $1$-cochain.
Setting $\psi=\delta^1_{\rm H}(f)$ and $\phi=\delta^1_{\rm YB}(f)$, we show that 
$\delta^2_i ( \phi \oplus \psi ) =0$ for $i={\rm YI}, {\rm IY}$.
The procedure showing that  $\delta^2_{\rm IY} ( \phi  \oplus  \psi )=0$  is depicted in Figures~\ref{IYder1} and~\ref{IYder2}, and similar to the proof of Lemma~\ref{lem:R}. 
In the right column of Figure~\ref{IYder1}, the positive terms of  $\delta^2_{\rm  IY} ( \phi \oplus
\psi )$ depicted in Figure~\ref{IY2cocy} are shown.
Circled vertices represent $\psi$ (3-valent vertices) and $\phi $ (4-valent ones). 
Under the assumption $\psi=\delta^1_{\rm H}(f)$ and $\phi=\delta^1_{\rm YB}(f)$,
each vertex is replaced by a linear combination of the diagrams with circles (representing $f$) on edges, according to the definition of $\delta^1(f)$ (Figure~\ref{Hochdiff1}). 
Such linear combinations for each term in the left column of Figure~\ref{IYder1}
are depicted in the right column of the figure. Thus $\psi=\delta^1_{\rm H}(f)$ and $\phi=\delta^1_{\rm YB}(f)$ is represented by replacing the positive terms in Figure~\ref{IY2cocy} by the right columns 
in Figure~\ref{IYder1}. The negative terms are similarly replaced by 
the right columns in Figure~\ref{IYder2}. With signs one sees that all terms cancel,
indicating $\delta^2_{\rm IY} ( \phi  \oplus  \psi)=0$.
The case $\delta^2_{\rm YI} ( \phi \oplus  \psi)=0$ is similar.
The other differentials for YBE and Hochschild are similar as well.
\end{proof}

\begin{figure}[htb]
\begin{center}
\includegraphics[width=3.2in]{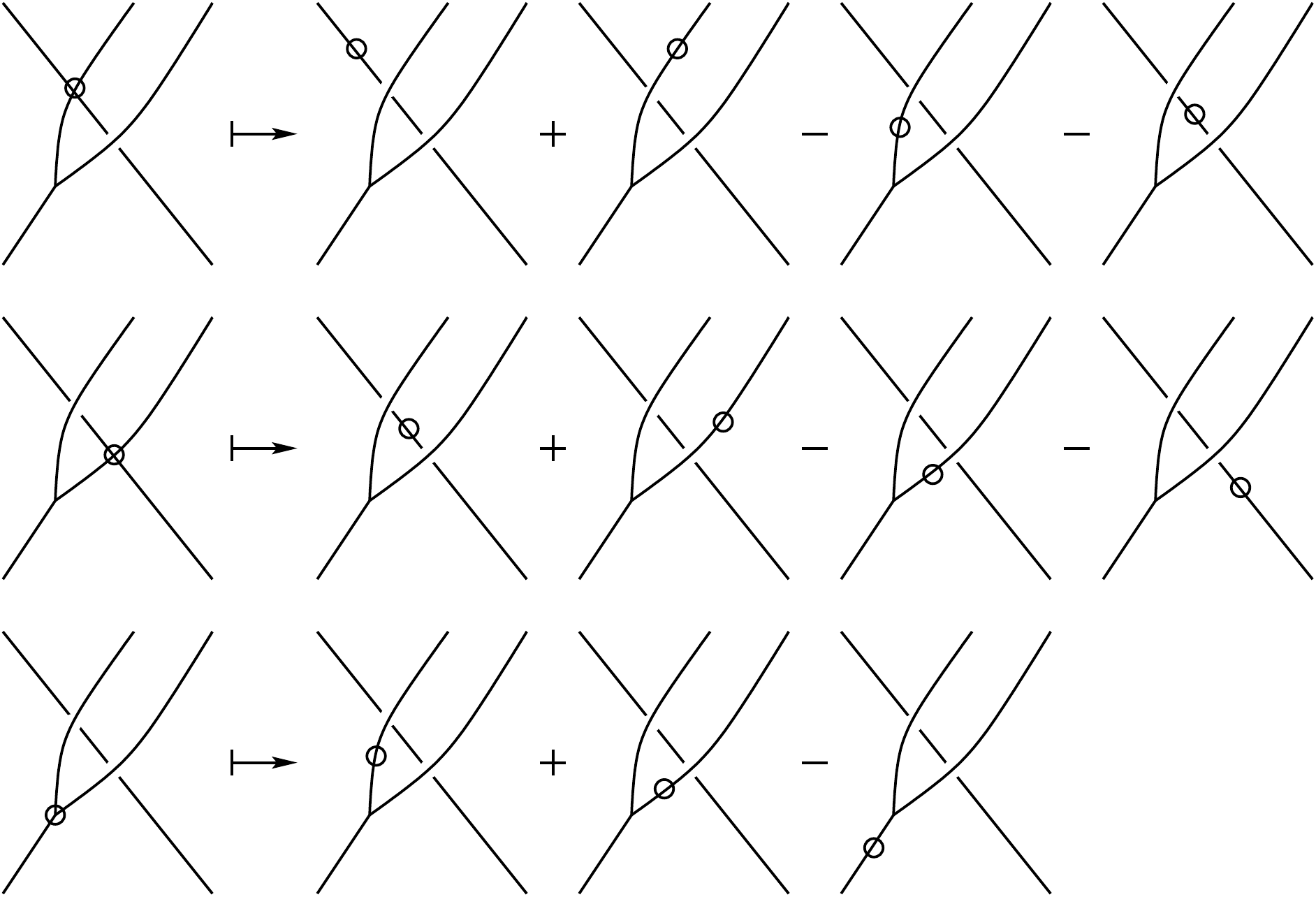}
\end{center}
\caption{}
\label{IYder1}
\end{figure}

\begin{figure}[htb]
\begin{center}
\includegraphics[width=3.5in]{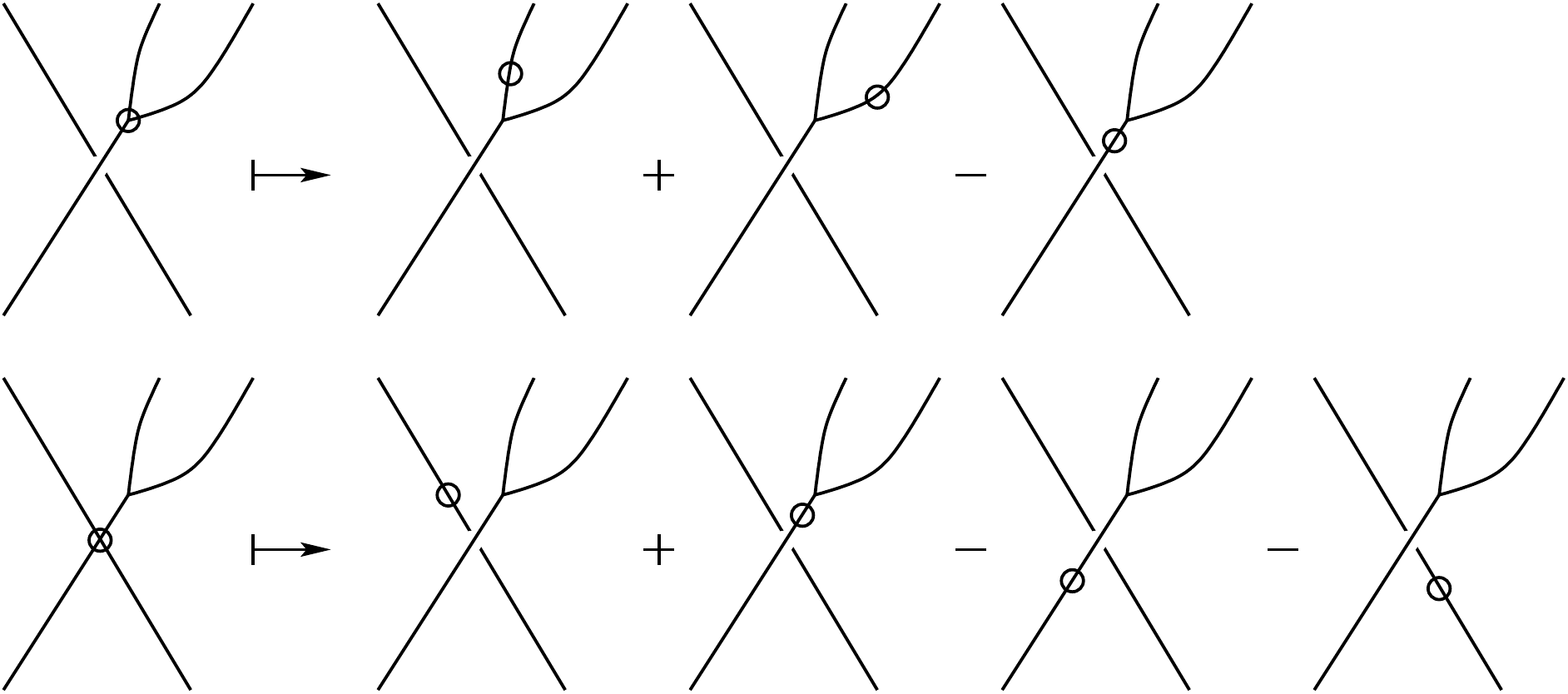}
\end{center}
\caption{}
\label{IYder2}
\end{figure}

\begin{remark}
{\rm
The YBH cohomology is further generalized in \cite{SZ-BC} to all higher dimensions
in a multicomplex form, that extends the definition for low dimensions presented in this paper.

}
\end{remark}

\subsection{Infinitesimal deformations of braided algebras}\label{sec:infinitesimal_deform}

	Let $(V,\mu,R)$ be a braided algebra over $\mathbb k$.  Let us consider the power series ring $\mathbb k[[\hbar]]$ over the formal variable $\hbar$, and let $(\hbar )$ denote the 
	ideal of $\mathbb k[[\hbar]]$ generated by $\hbar$.



\begin{definition} \label{def:inf_deform} 
	{\rm 

	Let $(V, \mu, R)$ be a braided algebra over ${\mathbb k}$. Extend $\mu$ and $R$ 
	to $\tilde V=V \otimes_{\mathbb k} {\mathbb k}[[\hbar]] / (\hbar^2 ) \cong V \oplus \hbar V$ by linearly extending on $\hbar V$  and use the same notation $\mu$ and $R$ on $\tilde V$. 	We say that $(\tilde V, \tilde \mu , \tilde R)$ is an {\it infinitesimal deformation} of $(V, \mu, R)$ 
	if ${\rm Im }(\tilde \mu - \mu) \subset \hbar  V$ and 
	${\rm Im }(\tilde R - R ) \subset \hbar  ( V \otimes  V)$.
Similar definitions hold for YI algebras and IY algebras. 

	Let $(V, \mu, R)$ be a braided algebra, and let $(\tilde V, \tilde \mu, \tilde R)$ and $(\hat V, \hat \mu, \hat R)$
	be two infinitesimal deformations of $V$. Let $\tilde f: \tilde V \rightarrow \hat V$ be a homomorphism of braided
	algebras (Definition~\ref{def:braided_hom}). Then, we say that $f$ is a {\it homomorphism of infinitesimal deformations} if $\tilde f |_{V}  : V \rightarrow V$
	is the identity map $\mathbb 1_V$. A homomorphism of infinitesimal deformations that is invertible through a
	homomorphism of infinitesimal deformations is said to be an {\it isomorphism of infinitesimal deformations}. 

}
\end{definition}

If $(\tilde V, \tilde \mu , \tilde R)$ is an infinitesimal deformation of $(V, \mu, R)$, then we can write
\begin{eqnarray*}
			\tilde \mu &=& \mu + \hbar \psi\\
			\tilde R  &=& R + \hbar \phi,
		\end{eqnarray*}
		where $\psi : V\otimes V\rightarrow V$ and $\phi: V\otimes V \rightarrow V\otimes V$.

\begin{remark}
	{\rm 
		Recall that in order for $\tilde \mu$ to satisfy the associativity condition, $\psi$ needs to be a Hochschild $2$-cocycle, while $\tilde R$ is a YB operator if and only if $\phi$ is a YB $2$-cocycle. 	
		
		Since the latter fact is not found as commonly in the literature as the former, we give a brief proof of it. See also \cite{Eisermann,Eisermann1}. We set $\phi_0 := R$ and $\phi_1 := \phi$ for notational convenience. The YBE for $\tilde R$ then gives us for the left hand side (modulo terms of quadratic order or higher in $\hbar$)
		\begin{eqnarray*}
				(\tilde R\otimes \mathbb 1)(\mathbb 1\otimes \tilde R)(\tilde R\otimes \mathbb 1) &=& (\phi_0\otimes \mathbb 1)(\mathbb 1\otimes \phi_0)(\phi_0\otimes \mathbb 1) + \hbar \sum_{i+j+k=1} (\phi_i\otimes \mathbb 1)(\mathbb 1\otimes \phi_j)(\phi_k\otimes \mathbb 1).
		\end{eqnarray*}
		Similarly, for the right hand side of the YBE, modulo terms at least quadratic in $\hbar$ we have
		\begin{eqnarray*}
				(\mathbb 1\otimes \tilde R)(\tilde R\otimes \mathbb 1)(\mathbb 1\otimes \tilde R) &=& (\mathbb 1\otimes \phi_0)(\phi_0\otimes \mathbb 1)(\mathbb 1\otimes \phi_0) + \hbar \sum_{i+j+k=1} (\mathbb 1\otimes \phi_i)(\phi_j\otimes \mathbb 1)(\mathbb 1\otimes \phi_k).
		\end{eqnarray*}
		Therefore, the YBE holds for $\tilde R$ if and only if
		\begin{eqnarray*}
			0 &=&	\sum_{i+j+k=1} (\phi_i\otimes \mathbb 1)(\mathbb 1\otimes \phi_j)(\phi_k\otimes \mathbb 1) - \sum_{i+j+k=1} (\mathbb 1\otimes \phi_i)(\phi_j\otimes \mathbb 1)(\mathbb 1\otimes \phi_k)\\
			&=& (\phi_1\otimes \mathbb 1)(\mathbb 1\otimes \phi_0)(\phi_0\otimes \mathbb 1) + (\phi_0\otimes \mathbb 1)(\mathbb 1\otimes \phi_1)(\phi_0\otimes \mathbb 1) + (\phi_0\otimes \mathbb 1)(\mathbb 1\otimes \phi_0)(\phi_1\otimes \mathbb 1) \\
			&& - (\mathbb 1\otimes \phi_1)(\phi_0\otimes \mathbb 1)(\mathbb 1\otimes \phi_0) - (\mathbb 1\otimes \phi_0)(\phi_1\otimes \mathbb 1)(\mathbb 1\otimes \phi_0) - (\mathbb 1\otimes \phi_0)(\phi_0\otimes \mathbb 1)(\mathbb 1\otimes \phi_1) 
		\end{eqnarray*}
		which is exactly the $2$-cocycle condition for YB cohomology.
	}
\end{remark}

\begin{lemma}\label{lem:IYIdeform}
	Let $(V,\mu,R)$ be a braided algebra, and let $\psi$ and $\phi$ denote an algebra $2$-cocycle and a YB $2$-cocycle, respectively. Then, setting $\tilde \mu = \mu + \hbar \psi$ and $\tilde R = R + \hbar \phi$, $(\tilde V,\tilde \mu, \tilde R)$ is an infinitesimal braided algebra deformation of $(V,\mu,R)$ if and only if 
	$\delta^2_{\rm YBH}(\phi\oplus \psi)=0$ hold.
\end{lemma}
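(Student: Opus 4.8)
The plan is to expand each of the four defining relations of a braided-algebra structure for the deformed data $(\tilde\mu,\tilde R)=(\mu+\hbar\psi,\,R+\hbar\phi)$ to first order in $\hbar$ inside $\tilde V\cong V\oplus\hbar V$, where $\hbar^2=0$, and to read off that the four resulting first-order obstructions are exactly the four summands of $\delta^2_{\rm YBH}(\phi\oplus\psi)$. By definition $(\tilde V,\tilde\mu,\tilde R)$ is an infinitesimal braided-algebra deformation precisely when $\tilde\mu$ is associative, $\tilde R$ satisfies the YBE, and the pair $(\tilde\mu,\tilde R)$ satisfies both the YI relation \eqref{eqn:YI} and the IY relation \eqref{eqn:IY}. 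Two of these are already on record: the associativity computation in the Hochschild subsection shows $\tilde\mu$ is associative iff $\delta^2_{\rm H}(\psi)=0$, and the remark immediately preceding this lemma shows $\tilde R$ is a YB operator iff $\delta^2_{\rm YB}(\phi)=0$; both hold by the cocycle hypotheses on $\psi$ and $\phi$. It therefore remains to analyze the YI and IY relations.

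For the YI relation \eqref{eqn:YI} I would substitute $\tilde\mu=\mu+\hbar\psi$ and $\tilde R=R+\hbar\phi$ into both sides and multiply out, discarding all terms of order $\hbar^2$. The $\hbar^0$ parts of the two sides are exactly the two sides of \eqref{eqn:YI} for the undeformed triple $(V,\mu,R)$, so they cancel because $(V,\mu,R)$ is assumed to be a braided algebra; this is the single step where the braided-algebra hypothesis is used, rather than merely the two separate cocycle conditions. Collecting the coefficient of $\hbar$ and moving everything to one side then yields, term by term, exactly the expression defining the cross-differential $\delta^2_{\rm IY}(\phi\oplus\psi)$ (up to the labelling convention for YI versus IY): the summand $(\psi\otimes\mathbb 1)(\mathbb 1\otimes R)(R\otimes\mathbb 1)$ paired with $-R(\mathbb 1\otimes\psi)$, and the two $\phi$-linear summands paired with $-\phi(\mathbb 1\otimes\mu)$. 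Since $\hbar^2=0$ in $\tilde V$, the deformed YI relation holds if and only if this $\hbar$-coefficient vanishes. The identical expansion applied to \eqref{eqn:IY} reproduces $\delta^2_{\rm YI}(\phi\oplus\psi)$ and disposes of the IY relation.

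Finally I would assemble the four equivalences: $(\tilde V,\tilde\mu,\tilde R)$ is an infinitesimal braided-algebra deformation iff $\delta^2_{\rm H}(\psi)=0$, $\delta^2_{\rm YB}(\phi)=0$, $\delta^2_{\rm YI}(\phi\oplus\psi)=0$, and $\delta^2_{\rm IY}(\phi\oplus\psi)=0$ hold simultaneously, which is by definition precisely $\delta^2_{\rm YBH}(\phi\oplus\psi)=0$, since $\delta^2_{\rm YBH}=\delta^2_{\rm YB}\oplus\delta^2_{\rm YI}\oplus\delta^2_{\rm IY}\oplus\delta^2_{\rm H}$. The whole argument is routine first-order bookkeeping of tensor compositions, so no genuine difficulty is expected. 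The only points demanding care are confirming that the constant terms genuinely cancel using the braided-algebra axioms, and correctly pairing each of \eqref{eqn:YI}, \eqref{eqn:IY} with the right one of $\delta^2_{\rm YI}$, $\delta^2_{\rm IY}$ with consistent signs, since the YI and IY labels are easy to transpose.
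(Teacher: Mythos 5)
Your proposal is correct and follows essentially the same route as the paper's proof: invoke the Hochschild and YB $2$-cocycle conditions to dispose of associativity and the YBE, then expand the YI and IY relations to first order in $\hbar$, cancel the $\hbar^0$ terms via the braided-algebra hypothesis, and identify the $\hbar$-coefficients with $\delta^2_{\rm IY}(\phi\oplus\psi)$ and $\delta^2_{\rm YI}(\phi\oplus\psi)$, respectively. Your cautionary remark about the YI/IY labelling is well taken, since the paper's own definitions pair Equation~\eqref{eqn:YI} with $\delta^2_{\rm IY}$ exactly as you state.
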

\begin{proof}
	Since $\psi$ and $\phi$ are algebra and YB $2$-cocycles, respectively, it follows that $\tilde \mu$ 
	is associative, and that $\tilde R$ is a YB operator. Now, we need to show that $\tilde \mu$ and $\tilde R$ satisfy the defining axioms of YI 
	and IY
	algebras if and only if they satisfy $\delta^2_{\rm YBH}(\phi\oplus \psi)=0$, as given in the statement. Let us consider the equation  $(\tilde \mu\otimes \mathbb 1)(\mathbb 1\otimes \tilde R)(\tilde R\otimes \mathbb 1) = \tilde R(\mathbb 1\otimes \tilde \mu)$
in $\tilde V=V \otimes {\mathbb k}[[\hbar]]/(\hbar^2)$. 
	We have for Equation~\eqref{eqn:IY}
	\begin{eqnarray*}
	(\tilde \mu\otimes \mathbb 1)(\mathbb 1\otimes \tilde R)(\tilde R\otimes \mathbb 1) &=& 
	(\mu\otimes \mathbb 1)(\mathbb 1\otimes R)(R\otimes \mathbb 1) +
	\hbar (\psi \otimes \mathbb 1)(\mathbb 1\otimes R)(R\otimes \mathbb 1) + \\
	&& \hbar (\mu \otimes \mathbb 1)(\mathbb 1\otimes \phi)(R\otimes \mathbb 1) +
	\hbar (\mu \otimes \mathbb 1)(\mathbb 1\otimes R)(\phi \otimes \mathbb 1) 
	\end{eqnarray*}
	and also
	\begin{eqnarray*}
	\tilde R(\mathbb 1\otimes \tilde \mu) &=&  R(\mathbb 1\otimes \mu) +
	\hbar \phi (\mathbb 1\otimes \mu) + \hbar R(\mathbb 1\otimes \psi) .
	\end{eqnarray*}
 Using the fact that $(V, \mu, R)$ is an IY algebra, 
  we find that $(\tilde \mu\otimes \mathbb 1)(\mathbb 1\otimes \tilde R)(\tilde R\otimes \mathbb 1) = \tilde R(\mathbb 1\otimes \tilde \mu)$ holds true if and only if 
$\delta^2_{\rm IY} (\phi\oplus\psi)=0$ 
is satisfied. One can then proceed analogously to show that the equation $(\mathbb 1\otimes \tilde \mu)(\tilde R\otimes \mathbb 1)(\mathbb 1\otimes \tilde R) = \tilde R (\tilde \mu \otimes \mathbb 1)$ if and only if 
$\delta^2_{\rm YI} (\phi\oplus\psi)=0$
 holds, up to higher order terms in $\hbar$. 
	\end{proof}

	\begin{remark}
		{\rm 
		Results analogous to the one given in Lemma~\ref{lem:IYIdeform} hold for YI and IY algebras, where just one of the mixed differentials $\delta^2_{\rm YI}$ and $\delta^2_{\rm IY} $ map to zero, respectively. 
	}
	\end{remark}

\begin{theorem}\label{thm:classification}
	Let $(V,\mu,R)$ be a braided algebra. Then the Yang-Baxter Hochschild second cohomology group classifies the infinitesimal deformations of $(V,\mu,R)$. 
\end{theorem}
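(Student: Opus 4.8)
The plan is to promote the cocycle-level correspondence of Lemma~\ref{lem:IYIdeform} to a bijection on equivalence classes, by matching the notion of isomorphism of infinitesimal deformations (Definition~\ref{def:inf_deform}) with the coboundary relation in the complex of Proposition~\ref{pro:cochain}. By Lemma~\ref{lem:IYIdeform}, writing $\tilde\mu=\mu+\hbar\psi$ and $\tilde R=R+\hbar\phi$ sets up a bijection between infinitesimal deformations $(\tilde V,\tilde\mu,\tilde R)$ of $(V,\mu,R)$ and $2$-cocycles $\phi\oplus\psi\in Z^2_{\rm YBH}(V,V)$. So it remains to show that two such deformations are isomorphic as infinitesimal deformations precisely when their associated $2$-cocycles differ by an element of ${\rm Im}(\delta^1_{\rm YBH})$.

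First I would pin down the shape of an isomorphism of infinitesimal deformations. If $\tilde f:\tilde V\to\hat V$ restricts to $\mathbb 1_V$ on $V$ and is $\mathbb k[[\hbar]]/(\hbar^2)$-linear, then necessarily $\tilde f=\mathbb 1+\hbar\theta$ for a unique $\theta\in{\rm Hom}(V,V)=C^1_{\rm YBH}(V,V)$; conversely every such $\theta$ gives an invertible map with inverse $\mathbb 1-\hbar\theta$ modulo $\hbar^2$. Thus homomorphisms of infinitesimal deformations are parametrized by $1$-cochains $\theta$, and the whole problem reduces to reading off the constraints imposed on $\theta$, $\phi\oplus\psi$, $\hat\phi\oplus\hat\psi$ by the requirement that $\tilde f$ be a homomorphism of braided algebras in the sense of Definition~\ref{def:braided_hom}.

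Next I would expand the two defining conditions to first order in $\hbar$. The algebra-homomorphism condition $\tilde f\tilde\mu=\hat\mu(\tilde f\otimes\tilde f)$ yields, at order $\hbar$, the identity $\psi+\theta\mu=\hat\psi+\mu(\theta\otimes\mathbb 1)+\mu(\mathbb 1\otimes\theta)$, that is $\hat\psi-\psi=-\delta^1_{\rm H}(\theta)$. The braiding-compatibility condition $\hat R(\tilde f\otimes\tilde f)=(\tilde f\otimes\tilde f)\tilde R$ yields, at order $\hbar$, $\hat\phi-\phi=(\theta\otimes\mathbb 1)R+(\mathbb 1\otimes\theta)R-R(\theta\otimes\mathbb 1)-R(\mathbb 1\otimes\theta)=-\delta^1_{\rm YB}(\theta)$. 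Hence $(\hat\phi\oplus\hat\psi)-(\phi\oplus\psi)=-\delta^1_{\rm YBH}(\theta)$, so the two deformations carry cohomologous cocycles. The converse is immediate: given a cocycle $\phi\oplus\psi$ and any $\theta$, the cochain $(\phi-\delta^1_{\rm YB}\theta)\oplus(\psi-\delta^1_{\rm H}\theta)$ is again a cocycle (coboundaries are cocycles, since $\delta^2_{\rm YBH}\delta^1_{\rm YBH}=0$ by Proposition~\ref{pro:cochain}), hence a deformation by Lemma~\ref{lem:IYIdeform}, and $\mathbb 1+\hbar\theta$ realizes an isomorphism between the two. Passing to equivalence classes then gives the desired bijection with $H^2_{\rm YBH}(V,V)=Z^2_{\rm YBH}/B^2_{\rm YBH}$.

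The main point to verify carefully is that requiring $\tilde f$ to respect both $\mu$ and $R$ simultaneously produces exactly the two coboundary conditions and no extra constraint: a priori one might fear that compatibility with the braiding together with the YI and IY axioms forces further relations among $\theta$, $\phi$, $\psi$. This does not happen, because both $(\tilde V,\tilde\mu,\tilde R)$ and $(\hat V,\hat\mu,\hat R)$ are already assumed to be braided-algebra deformations, so the YI and IY conditions are encoded in the cocycle condition $\delta^2_{\rm YBH}(\phi\oplus\psi)=0$ rather than in the isomorphism; the only genuine content of Definition~\ref{def:braided_hom} at order $\hbar$ is the pair of coboundary equations above. Confirming this separation---essentially that the mixed $C^{3,2}$ components enter only through the cocycle condition and play no role in the equivalence relation---is the one step that demands care rather than routine computation.
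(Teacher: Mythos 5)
Your proposal is correct and follows essentially the same route as the paper: it uses Lemma~\ref{lem:IYIdeform} for the cocycle--deformation correspondence, writes any isomorphism of infinitesimal deformations as $\mathbb 1 + \hbar f$ with inverse $\mathbb 1 - \hbar f$, and extracts the order-$\hbar$ terms of the two homomorphism conditions of Definition~\ref{def:braided_hom} to identify isomorphism of deformations with the coboundary relation (the only cosmetic difference being that you compare two arbitrary cohomologous cocycles directly, whereas the paper first treats the case of a cocycle cobounded to zero and then the general isomorphism case, which amounts to the same computation). Your closing observation---that the YI/IY components impose no additional constraint on the isomorphism because they are already absorbed into the cocycle condition---is exactly the separation implicit in the paper's argument.
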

\begin{proof}
	To prove the theorem, we need to show two facts. First, that Yang-Baxter Hochschild $2$-cocycles define infinitesimal deformations, and that cobounded $2$-cocycles produce equivalent 
	deformations. Second, that each infinitesimal deformation arises from a $2$-cocycle, and that if two deformations are isomorphic, then the corresponding $2$-cocycles are cobounded in Yang-Baxter Hochschild  cohomology.
The first part, that a Yang-Baxter Hochschild  $2$-cocycle 
	gives a Yang-Baxter Hochschild infinitesimal deformation,
	follows from Lemma~\ref{lem:IYIdeform}.
	
	Suppose that $(\phi,\psi)$ is cobounded in the Yang-Baxter Hochschild cohomology, where $\phi: V\otimes V\rightarrow V\otimes V$ and $\psi: V\otimes V\rightarrow V$. 
    We show that the induced infinitesimal deformation is equivalent to the original.
	By definition of first differential, the assumption  means that 
	$\phi = \delta_{\rm YB} f$ and $\psi = \delta_{\rm H} f$ for some $f:V\rightarrow V$. Then, $\tilde R$ is the trivial YB deformation and $\tilde \mu$ is the trivial algebra deformation. Then, we can construct the map $\tilde f = \mathbb 1 + \hbar f$. This is invertible in $V \otimes \mathbb k[[\hbar]]/(\hbar^2)$ with inverse $\tilde f^{-1} = \mathbb 1 - \hbar f$.  Also, we now show that both $\tilde f$ and $\tilde f^{-1}$ are braided algebra homomorphisms between the undeformed braided algebra 
	$(V, \mu, R )$
	 and the deformed braided algebra 
	 $(\tilde V, \tilde \mu, \tilde R )$.
	We show that 	$(\tilde f\otimes \tilde f) R = \tilde R(\tilde f\otimes \tilde f)$.
	For the left hand side of the previous equation we have, up to terms of higher order in $\hbar$,
	$$
	(\tilde f\otimes \tilde f) R 
	 =	R  + \hbar (f\otimes \mathbb 1)R +  \hbar (\mathbb 1\otimes f)R,
	$$
	while for the right hand side we obtain
	$$
	\tilde R(\tilde f\otimes \tilde f) = R + \hbar \phi+ \hbar R(f\otimes \mathbb1) + \hbar R(\mathbb 1\otimes f).
	$$
	The two terms are seen to be equal since $\delta^1_{\rm YB} f = \phi$. A similar inspection of the homomorphism condition for associative algebras shows also that $\tilde f \tilde \mu = \mu (\tilde f\otimes \tilde f)$. Analogously, one shows that the inverse of $\tilde f$ is a braided algebra homomorphism, completing the first part of the proof.   
	
	The statement that  if $\tilde R$ and $\tilde \mu$ produce an infinitesimal deformation of braided algebras, then $(\phi, \psi)$ is a Yang-Baxter Hochschild $2$-cocycle,  was shown in Lemma~\ref{lem:IYIdeform}.

	To complete the proof, we only need to show that if $(\phi_1, \psi_1)$ and $(\phi_2, \psi_2)$ give rise to equivalent deformations, then  they are cohomologous. Let $\tilde f$ be a map that gives the isomorphism between the braided algebra structures associated to $(\phi_1, \psi_1)$ and $(\phi_2, \psi_2)$. Then, since $\tilde f$ is an isomorphism that fixes the degree zero braided algebra structure, it follows that $\tilde f = \mathbb 1 + \hbar f$ for some $f$. Now, up to degrees higher than $1$ in $\hbar$ we have that $\tilde f^{-1} = {\mathbb 1} - \hbar f$. Since $\tilde f$ is a homomorphism of braided algebra structures it follows that 
	\begin{eqnarray}\label{eqn:cob_YB}
	\tilde R_1 &=&  (\tilde f^{-1}\otimes \tilde f^{-1}) \tilde R_2(\tilde f\otimes \tilde f),
	\end{eqnarray}
	essentially following the same steps as in the first part of the proof backward, where $\tilde R_1$ and $\tilde R_2$ are the YB operators deformed by $\psi_1$ and $\psi_2$, respectively. Moreover, we have
	\begin{eqnarray}\label{eqn:cob_Hoch}
	\tilde \mu_1 &=& \tilde f^{-1}\tilde \mu_2(\tilde f\otimes \tilde f),
	\end{eqnarray}
	where $\tilde \mu_1$ and $\tilde \mu_2$ are the associative multiplications induced by $\phi_1$ and $\phi_2$, respectively. From Equation~\eqref{eqn:cob_YB}, considering only terms up to degree $1$ in $\hbar$ we find that $\delta^1_{\rm YB} f = \psi_1 - \psi_2$. From Equation~\eqref{eqn:cob_Hoch}, considering terms up to degree $1$ we find that $\delta^1_{\rm H}f = \phi_1-\phi_2$. This completes the proof.
\end{proof}

\section{Braided multiplication and deformation cohomology}\label{sec:braidedmulti}

In this  and the next section, we discuss nontriviality of the second Yang-Baxter Hochschild cohomology groups. The third YBH cohomology groups are defined in Section~\ref{sec:3coh}
and independent from these two sections. The reader who would like to proceed to the third cohomology groups can 
go directly  to Section~\ref{sec:3coh}.

Let $(V, \mu, R)$ be a braided algebra.
In \cite{Baez} it is pointed out that the {\it braided multiplication}
$\mu R$ provides an associative multiplication, hence $(V, \mu R)$ is an associative algebra.
The diagrammatic proof is reproduced in Figure~\ref{XYassoc}. 
Furthermore, it follows that $(V, \mu R, R)$ is a braided algebra.
In this section we provide a monomorphism in second cohomology between them.
For short we denote $(V, \mu, R)$ by simply $V$ and refer $(V, \mu R, R)$ to $V_R$. Moreover, for a given braided algebra $V$ we denote by $V_{R^n}$ the braided algebra where multiplication is defined as $\mu_{R^n} := \mu \circ R^n$ and the YB operator is the same as for $V$, i.e. $R$.

\begin{figure}[htb]
\begin{center}
\includegraphics[width=3.5in]{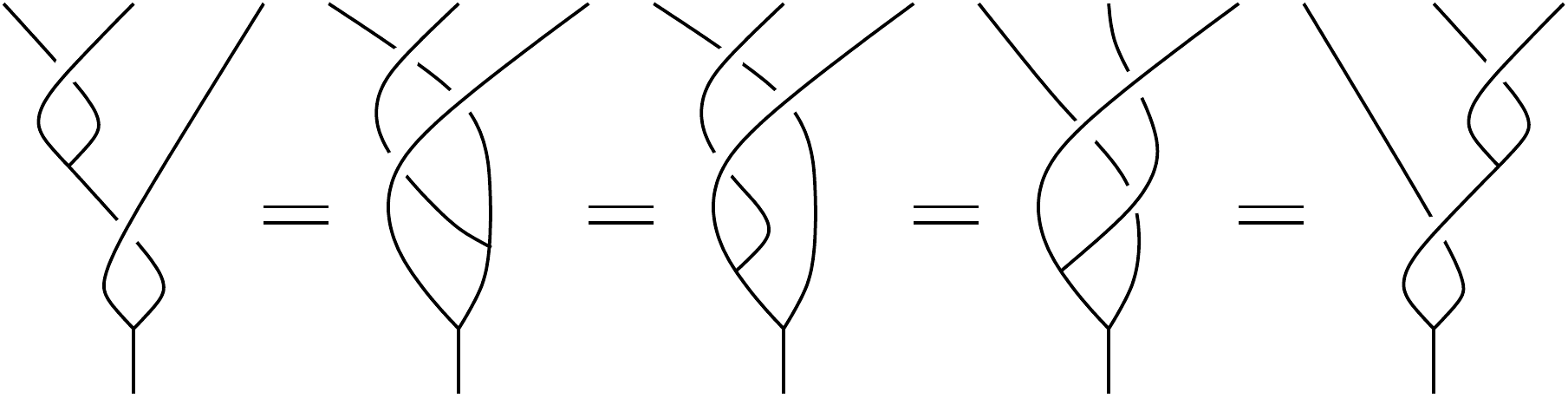}
\end{center}
\caption{}
\label{XYassoc}
\end{figure}

\begin{theorem}
There is a monomorphism
$\iota_R: H^2_{\rm YBH}(V, V) \rightarrow H^2_{\rm YBH}(V_R, V_R)$.
\end{theorem}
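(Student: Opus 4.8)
The plan is to construct an explicit map $\iota_R$ at the cochain level, show it commutes with the differentials (so that it descends to cohomology), and then verify injectivity on cohomology classes.

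First I would describe the cochain map. A YBH $2$-cocycle for $V=(V,\mu,R)$ is a pair $(\phi\oplus\psi)$ with $\phi\in C^{2,2}_{\rm YBH}(V,V)=\Hom(V^{\otimes 2},V^{\otimes 2})$ and $\psi\in C^{2,1}_{\rm YBH}(V,V)=\Hom(V^{\otimes 2},V)$. Since the target braided algebra $V_R=(V,\mu R,R)$ has the \emph{same} underlying module and the \emph{same} YB operator $R$, the natural candidate is to leave the YB-part unchanged and adjust the algebra-part to match the new multiplication $\mu_R=\mu R$. Concretely I would set
$$
\iota_R(\phi\oplus\psi) := \phi \oplus (\psi R + \mu\,\phi),
$$
or a closely related formula; the guiding principle is that an infinitesimal deformation $\tilde\mu=\mu+\hbar\psi$, $\tilde R=R+\hbar\phi$ of $V$ should be carried to the deformation $\tilde\mu\tilde R=\mu R+\hbar(\psi R+\mu\phi)$, $\tilde R=R+\hbar\phi$ of $V_R$, since $\tilde\mu\tilde R = \mu R + \hbar(\psi R + \mu \phi)$ modulo $\hbar^2$. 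This deformation-theoretic derivation both produces the correct formula and, via Theorem~\ref{thm:classification}, essentially guarantees that $\iota_R$ is well-defined on cohomology: a genuine deformation of $V$ maps to a genuine deformation of $V_R$, and equivalent deformations map to equivalent ones.

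Next I would verify the two required properties directly. For \textbf{well-definedness}, I must check that $\iota_R$ sends YBH $2$-cocycles to YBH $2$-cocycles and $2$-coboundaries to $2$-coboundaries. The cocycle condition for $V_R$ decomposes as $\delta^2_{\rm YB}$, $\delta^2_{\rm YI}$, $\delta^2_{\rm IY}$, $\delta^2_{\rm H}$ each vanishing; the YB component is automatic since $\phi$ and $R$ are unchanged, while the Hochschild and mixed YI/IY components must be checked using associativity of $\mu R$ (Figure~\ref{XYassoc}) together with the YI/IY relations for the original $V$. For the coboundary statement I would compute $\iota_R(\delta^1_{\rm YBH} f)$ for $f\in\Hom(V,V)$ and show it equals $\delta^1_{\rm YBH}f$ computed in $V_R$, which amounts to the identity $\mu\,\delta^1_{\rm YB}(f) + \delta^1_{\rm H}(f)\,R = \delta^{1,V_R}_{\rm H}(f)$ where the right side uses $\mu R$; this is a short algebraic manipulation.

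For \textbf{injectivity}, suppose $\iota_R(\phi\oplus\psi)$ is a coboundary in $H^2_{\rm YBH}(V_R,V_R)$, i.e. $\phi=\delta^1_{\rm YB}f$ and $\psi R+\mu\phi=\delta^{1,V_R}_{\rm H}f$ for some $f$. From the YB-part I immediately get $\phi=\delta^1_{\rm YB}f$, and substituting this into the algebra-part and using the coboundary identity just established, I can solve back to conclude $\psi=\delta^1_{\rm H}f$, so $(\phi\oplus\psi)$ was already a coboundary in $H^2_{\rm YBH}(V,V)$. The step I expect to be the main obstacle is isolating $\psi$ cleanly in this last deduction: because $\mu R$ (rather than $\mu$) appears, I must be careful that the operator $R$ is invertible — which it is, being a YB operator in the sense of the paper — so that the relation can be rearranged to recover $\psi=\delta^1_{\rm H}f$ without ambiguity. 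I would make this invertibility of $R$ explicit as the key point that powers the injectivity argument.
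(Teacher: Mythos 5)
Your proposal is correct and follows essentially the same route as the paper: the identical cochain formula $\phi_R=\phi$, $\psi_R=\mu\phi+\psi R$, justified by the same deformation-theoretic observation that $\mu_R+\hbar\psi_R\equiv(\mu+\hbar\psi)(R+\hbar\phi)$ modulo $(\hbar^2)$, the same coboundary computation for descent to cohomology, and the same injectivity argument via cancellation of the invertible $R$. No meaningful differences to report.
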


\begin{proof}
For $( \phi, \psi) \in C^2_{\rm YBH}(V,V)$ 
of $(V, \mu, R)$, 
define 2-cochains $(\phi_R, \psi_R)$ of $(V, \mu R, R)$ by 
$\psi_R 
:= \mu \phi + \psi R$ and 
$\phi_R 
:=\phi$, $(\psi_R, \phi_R)\in C^2_{\rm YBH}(V_R, V_R)$, as well as  
$f_R \in C^1_{\rm YBH}(V_R,V_R)$ by $f_R:=f$ for 
$f \in C^1_{\rm YBH}(V,V)$.
The map $\iota_R$  is defined by these assignments. 

First we show that $(\phi_R , \psi_R) \in Z^2_{\rm YBH}(V_R, V_R)$. 
Each of associativity, Equation~\eqref{eqn:YI}, and Equation~\eqref{eqn:IY} for $V_R$ follow from those of $V$, as shown in Figure~\eqref{XYassoc}. 
Analogously, in the infinitesimal deformation $\tilde V$  of $V$, $(\mu + \hbar \psi, R + \hbar \phi)$ satisfies these relations, from the 2-cocycle relations of $(\phi , \psi)$.
In order to show that $(\phi_R , \psi_R)$ satisfies the 2-cocycle conditions, it is sufficient to show that the infinitesimal deformation $\tilde V_R$ of $V_R$,
$(\mu_R + \hbar \psi_R, R + \hbar \phi_R)$ satisfies the associativity condition, Equation~\eqref{eqn:YI} and Equation~\eqref{eqn:IY}.
We note that 
$$\mu_R + \hbar \psi_R =\mu R + \hbar (  \mu \phi + \psi R)  \equiv (\mu + \hbar \psi) (R + \hbar \phi)
\quad {\rm modulo } \quad (\hbar^2), $$ 
and $R + \hbar \phi_R=R + \hbar \phi $.
Therefore the fact that the infinitesimal deformation $(\mu_R + \hbar \psi_R, R + \hbar\psi_R)$ giving a braided algebra 
modulo $(\hbar^2)$ follows from the assumption that $(\mu + \hbar \psi, R + \hbar \phi)$
giving the infinitesimal deformation that is a braided algebra.
Hence the claim follows.

Alternatively, it can be directly verified that 2-cocycle conditions for  $(\phi , \psi)$
implies those for  $(\phi_R , \psi_R)$ diagrammatically. For example, such computations for 
the Hochschild (associative) 2-cocycle condition are depicted in Figure~\ref{XYHoch}.
In the figure, the equalities $(A)$, $(B)$, $(C)$, $(D)$ are, respectively,
the YI, Hochschild, Yang-Baxter, and IY 2-cocycle conditions. 

To show that $\iota_R$ descends to cohomology, for 
$(\phi , \psi) =( \delta^1_{\rm YB}(f), \delta^1_{\rm H}(f))$, 
we  compute
\begin{eqnarray*}
\psi_R
&=&
\mu \delta^1_{\rm YB} (f) + \delta^1_{\rm H} (f) R \\ 
&=& [ \mu R ( f \otimes {\mathbb 1} ) + \mu R ( {\mathbb 1} \otimes f ) - 
\mu (  f \otimes {\mathbb 1} )  R - \mu  ( {\mathbb 1} \otimes f ) R ] \\
& & + [  \mu (  f \otimes {\mathbb 1} )  R + \mu  ( {\mathbb 1} \otimes f ) R - f \mu R ] \\
&=&  \mu R ( f \otimes {\mathbb 1} ) + \mu R ( {\mathbb 1} \otimes f )  - f \mu R \\
&=& \delta^1_{\rm YBH}( f ) 
\end{eqnarray*}
as desired. It is also clear that $\iota_R$ is a $\mathbb k$-module homomorphism.
To show injectivity, assume that $(\phi_R, \psi_R)$ is a coboundary, that is, 
$\psi_R=\delta^1_{\rm H}(f)$ and $\phi_R=\delta^1_{\rm YB}(f)$.
Since $\phi_R=\phi$, we have $\phi=\delta^1_{\rm YB}(f)$.
From $\psi_R=\delta^1_{\rm H}(f)$ we obtain
$\mu\phi + \psi R = \mu R ( f \otimes {\mathbb 1} ) + \mu R ( {\mathbb 1} \otimes f ) - 
 f \mu R$.
 Substituting $\phi=\delta^1_{\rm YB}(f)$ in the left hand side, we obtain 
$$  [ \mu R ( f \otimes {\mathbb 1} ) + \mu R ( {\mathbb 1} \otimes f ) - 
\mu (  f \otimes {\mathbb 1} )  R - \mu  ( {\mathbb 1} \otimes f ) R ] + \psi R =
 \mu R ( f \otimes {\mathbb 1} ) + \mu R ( {\mathbb 1} \otimes f ) - 
 f \mu R , $$ 
 which gives 
 $ \psi R = \mu ( f \otimes {\mathbb 1} ) R + \mu  ( {\mathbb 1} \otimes f ) R -  f \mu R$,
 and canceling the invertible $R$, we obtain $\psi=\delta^1_{\rm H}(f)$, thus
 $(\phi, \psi)$ is a coboundary.
\end{proof}

Inductively, we have the following.

\begin{corollary}\label{cor:VRn}
If $H^2_{\rm YBH} (V,V)\neq 0$, 
then for all $n >0$, we have $H^2_{\rm YBH} (V_{R^n},V_{R^n})\neq 0$
\end{corollary}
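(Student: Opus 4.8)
The plan is to prove the statement by induction on $n$, using the monomorphism $\iota_R$ from the preceding theorem as the inductive engine. The base case $n=1$ is exactly the theorem: since $\iota_R : H^2_{\rm YBH}(V,V) \rightarrow H^2_{\rm YBH}(V_R, V_R)$ is injective, nonvanishing of the source forces nonvanishing of the target.

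The key structural observation, which I would record first, is that the operation $V \mapsto V_R$ iterates correctly. By definition $V_{R^n}$ has multiplication $\mu R^n$ and YB operator $R$, so applying the $(-)_R$ construction to $V_{R^{n-1}} = (V, \mu R^{n-1}, R)$ multiplies its multiplication by $R$ on the right while leaving the YB operator unchanged, giving $(V_{R^{n-1}})_R = (V, \mu R^{n-1} R, R) = (V, \mu R^n, R) = V_{R^n}$. In particular each $V_{R^n}$ is again a braided algebra (this itself follows by the same induction from the fact that $V_R$ is braided whenever $V$ is), so the theorem applies with $V_{R^{n-1}}$ in place of $V$.

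For the inductive step I would assume $H^2_{\rm YBH}(V_{R^{n-1}}, V_{R^{n-1}}) \neq 0$ and invoke the theorem for the braided algebra $V_{R^{n-1}}$. This yields a monomorphism $\iota_R : H^2_{\rm YBH}(V_{R^{n-1}}, V_{R^{n-1}}) \rightarrow H^2_{\rm YBH}((V_{R^{n-1}})_R, (V_{R^{n-1}})_R) = H^2_{\rm YBH}(V_{R^n}, V_{R^n})$, and injectivity propagates nonvanishing from the $(n-1)$-st stage to the $n$-th. Composing these monomorphisms produces an injection $H^2_{\rm YBH}(V,V) \hookrightarrow H^2_{\rm YBH}(V_{R^n}, V_{R^n})$ for every $n > 0$, whence the latter group is nonzero.

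The only genuine point requiring care, and the one I would verify explicitly, is the identification $(V_{R^{n-1}})_R = V_{R^n}$, namely that the $(-)_R$ construction keeps the YB operator fixed at $R$ throughout the iteration rather than altering it; this is precisely what guarantees that the theorem is applied to the correct braided algebra at each step. Once this compatibility is confirmed, the remainder is a formal induction requiring no new cohomological computation.
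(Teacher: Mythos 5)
Your proposal is correct and matches the paper's intended argument: the paper derives this corollary simply by the word ``Inductively,'' meaning repeated application of the monomorphism $\iota_R$ from the preceding theorem, which is exactly your induction. Your explicit verification that $(V_{R^{n-1}})_R = V_{R^n}$ (the multiplication becomes $\mu R^{n-1}\circ R = \mu R^n$ while the YB operator stays $R$, and each stage remains a braided algebra) just spells out the detail the paper leaves implicit.
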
 

\begin{remark}
	{\rm 
			Corollary~\ref{cor:VRn} shows that whenever we find a braided algebra $V$ with nontrivial YBH cohomology (as we do in Section~\ref{sec:Hopf_cohomology}), there are infinitely many braided algebras $V_{R^n}$ whose second YBH cohomology group is nontrivial as well. 
	}
\end{remark}

\begin{figure}[htb]
\begin{center}
\includegraphics[width=3.5in]{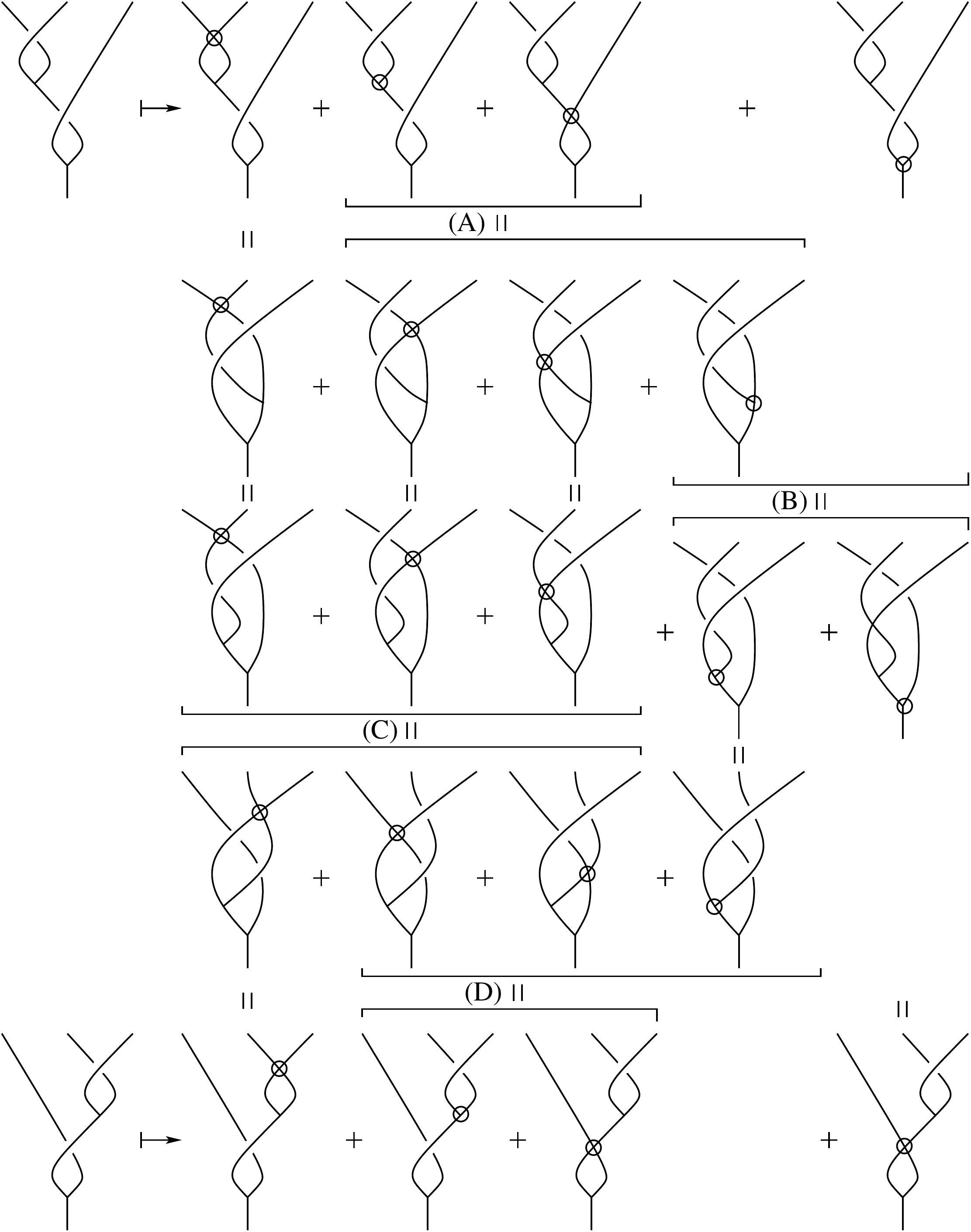}
\end{center}
\caption{}
\label{XYHoch}
\end{figure}

\section{Hopf algebra cohomology and braided algebra cohomology}\label{sec:Hopf_cohomology}

In this section we construct a homomorphism between the second cohomology group of Hopf algebras and the second YBH cohomology of their corresponding braided algebras.  Recall (\cite{ChariPressley}, Chapter 6) that the Hopf algebra second cohomology group with coefficients in $H$ characterizes Hopf algebra deformations. Roughly speaking, a $2$-cocycle for a Hopf algebra consists of a pair of cochains $(\xi, \zeta )$ such that $\xi$ deforms the algebra structure, $\zeta$ deforms the coalgebra structure, and such deformations are compatible. 

We adopt the symbols
$(\phi, \psi)$ for YBH 2-cocycles, and  
use
$(\xi, \zeta)$ for Hopf algebra 2-cocycles,
where 
$\xi \in {\rm Hom}(V^{\otimes 2}, V)$ is a deformation 2-cocycle of a Hopf algebra multiplication $\mu$, 
and $ \zeta \in {\rm Hom}(V, V^{\otimes 2})$ is a deformation 2-cocycle of a comultiplication $\Delta$.
Both multiplications for braided algebras and Hopf algebras share the same symbol $\mu$,
as it causes little confusion.

More specifically, Hopf algebra 2-cochain groups with coefficient group $H$
are defined as 
$$C^2_{\rm Hf}(H,H) = C^{2,1}_{\rm Hf}(H,H) \oplus C^{2,2}_{\rm Hf}(H,H),$$ 
where 
$C^{2,1}_{\rm Hf}(H,H)={\rm Hom}(H^{\otimes 2},H)$
and $C^{2,2}_{\rm Hf}(H,H)={\rm Hom}(H,H^{\otimes 2})$.
If $\mu$ and $\Delta$ denote the multiplication and comultiplication of a Hopf algebra $H$,
the 2-cocycles $\xi \in Z^{2,1}_{\rm Hf}(H,H)$ and $\zeta \in Z^{2,2}_{\rm Hf}(H,H)$ deform
$\mu$ and $\Delta$, respectively, so that 
$\tilde \mu=\mu + \hbar \xi$ and $\tilde \Delta = \Delta + \hbar \zeta$ defines 
a Hopf algebra structure on $H \otimes {\mathbb k}[[\hbar]]/(\hbar^2)$. 
This condition requires that $(\xi, \zeta)$ satisfies the compatibility condition 
needed to guarantee that $\tilde \Delta \tilde \mu = (\tilde \mu \otimes \tilde \mu)(\mathbb 1\otimes \tau \otimes \mathbb 1)(\tilde \Delta \otimes \tilde \Delta)$,
where $\tau$ indicates transposition of tensor factors as before.

Let $H$ be a Hopf algebra and let $(\xi, \zeta)$ 
be a $2$-cocycle with coefficients in $H$. The $2$-cocycle condition for $\xi$ with coefficients in $H$ takes the form (on simple tensors)
\begin{eqnarray*}\label{eqn:Hopf_2cocy}
\xi(x\otimes y)z + \xi(xy\otimes z) = x\xi(y\otimes z) + \xi(x\otimes yz).
\end{eqnarray*}

The morphism $\zeta : H\rightarrow H\otimes H$ is a  coalgebra cocycle (see \cite{Doi}). That is, for every $x \in H$, $\zeta$ satisfies the equation
\begin{eqnarray*}\label{eqn:Hopf_2cocy_coalgebra}
x^{(1)}\otimes \zeta(x^{(2)}) + (\mathbb 1\otimes \Delta)\zeta(x) = \zeta(x^{(1)})\otimes x^{(2)} + (\Delta\otimes \mathbb 1)\zeta(x).
\end{eqnarray*}
Additionally, from imposing that the infinitesimal deformations $\xi$ and $\zeta$ are compatible -- i.e. they define a bialgebra structure, one gets a compatibility condition involving both maps, namely
\begin{eqnarray*}\label{eqn:bialgebra}
\Delta(\xi(x\otimes y)) + \zeta(xy) = (\mu\otimes \xi + \xi\otimes \mu)\Delta^{13}(x) \Delta^{24}(y) + \zeta(x)\Delta(y) + \Delta(x)\zeta(y),
\end{eqnarray*}
where we have used the definition $\Delta^{ij}(x)$ to indicate the $4$ components tensor with $\Delta(x)$ at entries $i$ and $j$, and $1$ elsewhere. 

	Suppose that $(\xi,\zeta)$ is a Hopf algebra $2$-cocycle. It is known (see \cite{ChariPressley}) that any bialgebra deformation of $H$ is equivalent to a deformation where unit and counit are not deformed. 
	Thus, we will keep $\eta$ and $\epsilon$ to indicate unit and counit, respectively, without any 
	 deformation. 
	This 
	assumption is equivalent to that $(\xi,\zeta)$ is a normalized cocycle,
	  i.e. 
	a cocycle such that 
	$$\xi (1\otimes x) = \xi(x\otimes 1) = 0
	\quad {\rm and} \quad 
	(\mathbb 1\otimes \epsilon)\zeta(x) = (\epsilon \otimes \mathbb 1)\zeta(x) = 0$$
	for all $x\in H$. Denote by $\hat{Z}_{\rm Hf}^2 (H,H)$ the group of normalized Hopf algebra 2-cocycles.

	Below we observe that for any normalized 2-cocycle $(\xi,\zeta)$, there is a unique deformation of the antipode 
 	$\tilde S = S + \hbar S'$ that gives rise to an infinitesimal deformation of a Hopf algebra.
 	The degree 1 term for the antipode condition is as follows:

	\begin{eqnarray}\label{eqn:hexagon}
			\xi(\mathbb 1\otimes S)\Delta + \mu(\mathbb 1\otimes S)\zeta + \mu (\mathbb 1 \otimes S') \Delta =  	\xi( S \otimes \mathbb 1)\Delta + \mu(S \otimes \mathbb 1)\zeta + \mu (S' \otimes \mathbb 1) \Delta =  0.
	\end{eqnarray}
	We set $S'(x) = -S(x^{(1)})\xi(x^{(2)}\otimes S(x^{(3)}) )
	- S(x^{(1)})\mu((\mathbb 1\otimes S)\zeta(x^{(2)}) ) $. 
	Using this definition in Equation~(\ref{eqn:hexagon}) we obtain
	\begin{eqnarray*}
			\lefteqn{\xi(\mathbb 1\otimes S)\Delta(x) + \mu(\mathbb 1\otimes S)\zeta(x) + \mu (\mathbb 1 \otimes S') \Delta(x)}\\
			&=& \xi(x^{(1)}\otimes S(x^{(2)})) +  \mu(\mathbb 1\otimes S)\zeta(x) - x^{(1)}S(x^{(2)})\xi(x^{(3)}\otimes S(x^{(4)}) ) \\
			&& \hspace{55mm}  - \mu(x^{(1)}\otimes S(x^{(2)})\mu((\mathbb 1\otimes S) \zeta(x^{(3)}) ) \\
			&=&  \xi(x^{(1)}\otimes S(x^{(2)})) + \mu(\mathbb 1\otimes S)\zeta(x) - \epsilon(x^{(1)})\xi(x^{(2)} \otimes S(x^{(3)})) - \epsilon(x^{(1)}) \mu((\mathbb 1\otimes S)\zeta(x^{(2)}) \\
			&=& 0. 
	\end{eqnarray*}
	Applying the $2$-cocycle condition for $\xi$ on (the sum in Sweedler notation of terms) $S(x^{(1)}), x^{(2)}, S(x^{(3)}) $, and using the fact that $\xi$ is normalized, we find that 
	$$\xi(S(x^{(1)})\otimes x^{(2)})S(x^{(3)}) = S(x^{(1)})\xi(x^{(2)} \otimes S(x^{(3)}) ) .$$ 
	Similarly, using the $2$-cocycle condition and normalization for $\zeta$ we also find that 
	$$S(x^{(1)})\mu((\mathbb 1\otimes S)\zeta(x^{(2)}) ) = \mu((S\otimes \mathbb 1)\zeta(x^{(1)}) ) S(x^{(2)}).$$ 
	Therefore, we can rewrite $S'$ using these symmetries, and proceed in a way analogous to the previous result to obtain that $\xi( S \otimes \mathbb 1)\Delta + \mu(S \otimes \mathbb 1)\zeta + \mu (S' \otimes \mathbb 1) \Delta =  0$ as well, which completely shows that $S'$ satisfies Equation~\eqref{eqn:hexagon}. The fact that $S'$ is unique, follows from uniqueness of antipodes in Hopf algebras. 
	
	We will denote the normalized second cohomology group of $H$ with coefficients in $H$ with the symbol $\hat H^2_{\rm Hf}(H,H)  := \hat{Z}_{\rm Hf}^2 (H,H) /  ( B^1_{\rm Hf}(H,H) \cap \hat{Z}_{\rm Hf}^2 (H,H) )$, where $B^1_{\rm Hf}(H,H)$ denotes the coboundaries in Hopf cohomology. We recall that a coboundary is a pair $(\delta^{1,1}_{\rm Hf}(f),\delta^{1,2}_{\rm Hf}(f))$ where
	\begin{eqnarray*}
			\delta^{1,1}_{\rm Hf}(f) &:=& \mu(\mathbb 1\otimes f) + \mu(f\otimes \mathbb 1) - f\mu\\
			\delta^{2,1}_{\rm Hf}(f) &:=& (f\otimes \mathbb 1+\mathbb 1\otimes f)\Delta - \Delta f, 
	\end{eqnarray*}
	for some $f:H \longrightarrow H$. 

	For ease of notation, we will denote $\zeta(x)$ in Sweedler notation as for $\Delta$, the only difference being that we will use lower scripts for $\zeta$, while upper scripts for $\Delta$. Therefore, we will have $\zeta(x) = x_{(1)}\otimes x_{(2)}$. Since $(\Delta\otimes \mathbb 1)\zeta$ and $(\zeta\otimes \mathbb 1)\Delta$ are in general not the same, we introduce further a notation to distinguish the consecutive application of $\Delta$ and $\zeta$. We set 
	$$(\mathbb 1\otimes \Delta)\zeta(x) = (\mathbb 1\otimes \Delta)(x_{(1)}\otimes x_{(2)}) = x_{(1)}\otimes (x_{(2)})^{(1)}\otimes (x_{(2)})^{(2)} =: \prescript{}{(1)}x\otimes \prescript{}{(2)}x \otimes \prescript{}{(3)}x, $$
	 and similarly we set 
 	$(\zeta \otimes \mathbb 1 )\Delta(x) =: \prescript{(1)}{}x\otimes \prescript{(2)}{}x\otimes \prescript{(3)}{}x$.
	This notation allows us to forget the brackets that determine whether the internal index is lower or upper -- i.e. whether we have applied $\Delta$ or $\zeta$ first.

Let $H$ denote a Hopf algebra and let $R_H$ be the YB operator defined through 
the adjoint map 
as in Section~\ref{sec:Hopf_example}. Let $(\xi,\zeta)$ denote a Hopf algebra $2$-cocycle as described above.
We define the cochain $\Psi_{\zeta}(\xi)  : H\otimes H \rightarrow H\otimes H$ as 
\begin{eqnarray*}
	\Psi_\zeta(\xi)(x\otimes y) &=& y^{(1)}\otimes S(y^{(2)})\xi(x\otimes y^{(3)}) + y^{(1)}\otimes \xi(S(y^{(2)})\otimes xy^{(3)})\\
	&&+\prescript{}{(1)}{y} \otimes S(\prescript{}{(2)}{y})x\prescript{}{(3)}{y} + \prescript{(1)}{}{y} \otimes S(\prescript{(2)}{}{y})x\prescript{(3)}{}{y} +  y^{(1)} \otimes S'(y^{(2)})xy^{(3)}. 
\end{eqnarray*}
Observe that since $S'$ is uniquely associated to the pair $(\xi,\zeta)$, we do not indicate explicitly the dependence of $\Psi$ on $S'$, 
but rather only express its dependence on $\xi$ and $\zeta$.

%

\begin{theorem}\label{thm:Hopf_mono}
	The map $\Psi$ defined above induces a well defined homomorphism $\Psi: \hat H^2_{\rm Hf}(H,H) \rightarrow H^2_{\rm YBH}(H,H)$, $(\xi,\zeta) \mapsto (\Psi_\zeta (\xi)), \xi)$. Moreover, if $[(\xi,\zeta)] \in H^2_{\rm Hf}(H,H)$ is such that $0 \neq [\xi] \in H^2_{\rm H}(H,H)$, then $\Psi(\xi,\zeta)$ is the nonzero class in $H^2_{\rm YBH}(H,H)$.
\end{theorem}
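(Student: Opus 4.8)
The plan is to verify the two assertions of Theorem~\ref{thm:Hopf_mono} separately: first that $\Psi$ is a well-defined homomorphism on normalized cohomology, and second that it preserves a certain nontriviality condition. I would begin with the well-definedness, which itself splits into two checks. The first check is that for a normalized Hopf algebra $2$-cocycle $(\xi,\zeta)$, the pair $(\Psi_\zeta(\xi),\xi)$ is genuinely a YBH $2$-cocycle, i.e. that $\delta^2_{\rm YBH}(\Psi_\zeta(\xi)\oplus\xi)=0$. By the definition of $\delta^2_{\rm YBH}$ as the direct sum $\delta^2_{\rm YB}\oplus\delta^2_{\rm YI}\oplus\delta^2_{\rm IY}\oplus\delta^2_{\rm H}$, this amounts to four conditions. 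The $\delta^2_{\rm H}(\xi)=0$ component is immediate, since $\xi$ is by hypothesis a Hopf algebra algebra-cocycle, which is exactly the Hochschild condition displayed in the excerpt. For the remaining three components I would appeal to Lemma~\ref{lem:IYIdeform}: it suffices to show that $\tilde\mu=\mu+\hbar\xi$ together with $\tilde R=R_H+\hbar\Psi_\zeta(\xi)$ forms an infinitesimal \emph{braided} algebra deformation of $(H,\mu,R_H)$. The natural strategy is to produce this deformation conceptually rather than by brute-force diagram chasing: deform the Hopf algebra structure by $(\xi,\zeta)$ (and the induced antipode deformation $\tilde S=S+\hbar S'$ constructed earlier in the excerpt), obtain a deformed Hopf algebra $\tilde H$ over $\mathbb k[[\hbar]]/(\hbar^2)$, and then apply the construction of Lemma~\ref{lem:BA} to $\tilde H$ to get a braided algebra $(\tilde H,\tilde\mu,R_{\tilde H})$. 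The key computation is then to check that the linear term of $R_{\tilde H}=R_H+\hbar(\,\cdot\,)$ is exactly $\Psi_\zeta(\xi)$, which is where the precise five-term formula for $\Psi_\zeta(\xi)$ comes from.

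Concretely, the term $\hbar$-linearization of $R_{\tilde H}(x\otimes y)=\tilde y^{(1)}\otimes\tilde S(\tilde y^{(2)})\,x\,\tilde y^{(3)}$, where the tilde decorations mean the deformed comultiplication $\tilde\Delta=\Delta+\hbar\zeta$, deformed antipode $\tilde S=S+\hbar S'$, and deformed product $\tilde\mu=\mu+\hbar\xi$ are used, should reproduce the five summands of $\Psi_\zeta(\xi)$: the $\xi$-deformations of the two products give the first two terms, the $\zeta$-deformation inserted in either of the two iterated comultiplications gives the $\prescript{}{(1)}{y}$ and $\prescript{(1)}{}{y}$ terms (here the careful bookkeeping notation for $(\mathbb 1\otimes\Delta)\zeta$ versus $(\mathbb 1\otimes\zeta)\Delta$ introduced just before the theorem is essential), and the $S'$-deformation gives the last term. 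Granting this match, Lemma~\ref{lem:BA} applied to $\tilde H$ guarantees the deformed triple is a braided algebra modulo $(\hbar^2)$, and Lemma~\ref{lem:IYIdeform} then forces $(\Psi_\zeta(\xi),\xi)$ to satisfy all of $\delta^2_{\rm YB}$, $\delta^2_{\rm YI}$, $\delta^2_{\rm IY}$ vanishing. The second check for well-definedness is that a normalized Hopf coboundary maps into a YBH coboundary: if $(\xi,\zeta)=\delta^1_{\rm Hf}(g)$ for some $g:H\to H$, one expects $(\Psi_\zeta(\xi),\xi)=(\delta^1_{\rm YB}(g),\delta^1_{\rm H}(g))$. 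This again follows cleanly from the conceptual picture: a Hopf coboundary corresponds to deforming by an inner automorphism $\mathbb 1+\hbar g$, so the induced braided-algebra deformation is trivial (gauge-equivalent to the undeformed one via $\tilde f=\mathbb 1+\hbar g$), and by Theorem~\ref{thm:classification} triviality of the deformation is equivalent to the YBH $2$-cocycle being a coboundary. Linearity of $\Psi$ and hence the homomorphism property are then routine.

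For the final nontriviality statement, the hypothesis is that $[\xi]\neq0$ in $H^2_{\rm H}(H,H)$, and the goal is that $\Psi(\xi,\zeta)=(\Psi_\zeta(\xi),\xi)$ is nonzero in $H^2_{\rm YBH}(H,H)$. The natural argument is to invoke the projection from YBH cochains onto their Hochschild component: the second coordinate of the YBH $2$-cocycle is precisely $\xi$, and the $\delta^1_{\rm YBH}=\delta^1_{\rm YB}\oplus\delta^1_{\rm H}$ structure means that if $(\Psi_\zeta(\xi),\xi)$ were a YBH coboundary $\delta^1_{\rm YBH}(f)$, then in particular $\xi=\delta^1_{\rm H}(f)$ would be a Hochschild coboundary, contradicting $[\xi]\neq0$. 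In other words, the forgetful map $H^2_{\rm YBH}(H,H)\to H^2_{\rm H}(H,H)$ sending a class to its $C^{2,1}$-component is well defined (because the YBH coboundary map restricts to the Hochschild coboundary map on that component), and $\Psi(\xi,\zeta)$ maps under it to $[\xi]\neq0$, whence $\Psi(\xi,\zeta)\neq0$.

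I expect the main obstacle to be the explicit $\hbar$-linearization establishing that $R_{\tilde H}$ has linear part exactly $\Psi_\zeta(\xi)$, since it requires disentangling the noncommuting first-order contributions of $\tilde\Delta$, $\tilde S$, and $\tilde\mu$ and reconciling them with the two distinct iterated-coproduct conventions $\prescript{}{(i)}{y}$ and $\prescript{(i)}{}{y}$. In particular one must be careful that the deformed antipode $S'$ appearing in the fifth term is \emph{the same} $S'$ constructed from $(\xi,\zeta)$ earlier in the excerpt, so that $R_{\tilde H}$ uses a consistent deformed Hopf structure; this is the point where the uniqueness of $S'$ (from uniqueness of antipodes) is silently used. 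Once this identification is secured, everything else follows structurally from Lemmas~\ref{lem:BA} and \ref{lem:IYIdeform} and Theorem~\ref{thm:classification} without further hand computation.
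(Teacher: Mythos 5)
Your proposal is correct, and for two of its three components it follows the paper's proof essentially verbatim: the cocycle property of $(\Psi_\zeta(\xi),\xi)$ is obtained, exactly as in the paper, by deforming the Hopf structure by $(\xi,\zeta)$ together with the unique antipode deformation $S'$, applying Lemma~\ref{lem:BA} to the deformed Hopf algebra, identifying the $\hbar$-linear term of $R_{\tilde H}$ with $\Psi_\zeta(\xi)$, and invoking Lemma~\ref{lem:IYIdeform}; and the final nontriviality claim is proved by the same observation that a YBH coboundary has Hochschild-cobounded second component. Where you genuinely diverge is the descent to cohomology. The paper handles this by an explicit Sweedler-notation computation: it evaluates $(\mathbb 1\otimes\Delta)\zeta$ and $(\mathbb 1\otimes\zeta)\Delta$ for a cobounded $\zeta$, proves the identity $Sf-fS+S'=0$, and then expands the five terms of $\Psi_\zeta(\xi)$ to exhibit it explicitly as $\delta^1_{\rm YB}(-f)$. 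You instead argue structurally: a cobounded normalized Hopf $2$-cocycle yields a Hopf deformation isomorphic to the trivial one via $\tilde f=\mathbb 1+\hbar f$; since a bialgebra isomorphism of Hopf algebras automatically intertwines antipodes, it intertwines the adjoint YB operators, so $\tilde f$ is an isomorphism of infinitesimal braided algebra deformations, and the classification in Theorem~\ref{thm:classification} then forces $(\Psi_\zeta(\xi),\xi)$ to be a YBH coboundary. Your route is shorter and makes transparent \emph{why} the descent holds (functoriality of $H\mapsto(H,\mu,R_H)$ combined with the classification theorem), but it leans on two facts you should state rather than leave implicit: that cobounded Hopf cocycles correspond to trivial Hopf deformations (the standard Gerstenhaber--Schack/Chari--Pressley fact the paper cites), and that bialgebra morphisms between Hopf algebras commute with antipodes. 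The paper's computation, by contrast, is self-contained within its own conventions and produces the explicit cobounding cochain together with the formula $S'=fS-Sf$, which the structural argument never exhibits.
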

\begin{proof}
We note  that a $2$-cochain $\phi: H\otimes H\rightarrow H\otimes H$
 is a YB $2$-cocycle for $R_H$ if and only if it is an infinitesimal deformation, i.e. if and only if $\tilde R = R_H + \hbar \phi$
 is a YB operator for $H \otimes \mathbb k[[\hbar]]/(\hbar^2)$. 
Let us consider the deformed structures $\tilde \mu = \mu + \hbar \xi$ and $\tilde \Delta = \Delta + \hbar \zeta$. They define a Hopf algebra structure (the infinitesimal deformation of $H$) since $(\xi,\zeta)$ is a Hopf $2$-cocycle by definition. 
Let  $R_{\tilde H}$ be the YB operator defined through 
the adjoint map 
as in Section~\ref{sec:Hopf_example} for the deformation $\tilde H$.
Since $R_{\tilde H}$ can be written as $R_{\tilde H} = R_H + \hbar \Psi_{\zeta}(\xi)$ 
we have that $\Psi_{\zeta}(\xi)$ satisfies the YB $2$-cocycle condition. Since $\xi$ is a Hochschild $2$-cocycle by definition of Hopf cohomology, to show that $(\Psi_\zeta(\xi),\xi)$ is a braided algebra $2$-cocycle, we need to show that the YI and IY $2$-cocycle conditions also hold. 

We apply the same argument as above to show that $(\Psi_{\zeta}(\xi), \xi)$ satisfy the YI and IY $2$-cocycle conditions.
By Lemma~\ref{lem:IYIdeform}, the deformations $\tilde \mu = \mu + \hbar \xi$ and $\tilde R = R + \hbar \Psi_\zeta (\xi)$ define a braided algebra if and only if 
$(\Psi_\zeta (\xi), \xi)$ is a YI and IY 2-cocycle.
By Lemma~\ref{lem:BA}, if $\tilde \mu$ and $\tilde \Delta$ define a (infinitesimal) Hopf algebra, 
then $\tilde\mu$ and the adjoint YB operator $\tilde R$ define a braided algebra.
Hence $(\Psi_\zeta (\xi), \xi)$ is a YI and IY 2-cocycle.

We have therefore shown that $(\Psi_\zeta(\xi),\xi) \in Z^2_{\rm YBH}(H,H)$. We now need to show that this map descends to cohomology. To do this, we need to prove that if $(\xi,\zeta) = (\delta^{1,1}_{\rm Hf}(f), \delta^{1,2}_{\rm Hf}(f))$ is cobounded and normalized, then $(\Psi_\zeta(\xi), \xi)$ is cobounded in YBH cohomology as well. Using the definition of Hochschild cohomology and YBH cohomology, we have that it is enough to show that $\Psi_\zeta(\xi) = \delta^2_{\rm YB}(f)$ for some $1$-cochain $f: H\rightarrow H$. 

We compute first $\prescript{}{(1)}x\otimes \prescript{}{(2)}x \otimes \prescript{}{(3)}x$ and $\prescript{(1)}{}x\otimes \prescript{(2)}{}x\otimes \prescript{(3)}{}x$ for such a cobounded $\zeta$. We have
\begin{eqnarray*}
	\prescript{}{(1)}x\otimes \prescript{}{(2)}x \otimes \prescript{}{(3)}x &=& (\mathbb 1\otimes \Delta)\delta_{\rm Hf}^{1,2}(f)(x)\\
	&=& -f(x)^{(1)}\otimes f(x)^{(2)} \otimes f(x)^{(3)} + f(x^{(1)}) \otimes x^{(2)} \otimes x^{(3)}\\
	&& + x^{(1)} \otimes f(x^{(2)})^{(1)} \otimes f(x^{(2)})^{(2)}
\end{eqnarray*}
and 
\begin{eqnarray*}
	\prescript{(1)}{}x\otimes \prescript{(2)}{}x\otimes \prescript{(3)}{}x &=& (\mathbb 1\otimes \delta_{\rm Hf}^{1,2}(f))\Delta x\\
	&=& -x^{(1)}\otimes f(x^{(2)})^{(1)}\otimes f(x^{(2)})^{(2)} + x^{(1)}\otimes f(x^{(2)})\otimes x^{(3)}\\
	&& + x^{(1)} \otimes x^{(2)} \otimes f(x^{(3)}).
\end{eqnarray*}
Before proving that $\Psi_\zeta(\xi)$ is a YB coboundary, we also show that for $(\xi,\zeta) = (\delta^{1,1}_{\rm Hf}(f), \delta^{1,2}_{\rm Hf}(f))$, we have $Sf -fS + S' = 0$, where $S'$ is the degree 1 term of the unique antipode deformation associated with $(\xi,\zeta)$. In fact, we have
\begin{eqnarray*}
		S'(x) &=& -S(x^{(1)})\xi(x^{(2)}\otimes S(x^{(3)}) ) - S(x^{(1)})\mu((\mathbb 1\otimes S)\zeta(x^{(2)}) ) \\
		&=&f(S(x)) - S(f(x)) + \epsilon(f(x^{(2)})) S(x^{(1)}) \\
		&=& f(S(x)) - S(f(x)), 
\end{eqnarray*}
where we have used the fact that $\epsilon(f(x^{(2)})) S(x^{(1)}) = 0$ 
because 
$\delta^1_{\rm Hf}(f)$ is normalized.

Now, we use these results to compute $\Psi_\zeta(\xi)$:
\begin{eqnarray*}
	\Psi_\zeta(\xi)(x\otimes y) &=& y^{(1)}\otimes S(y^{(2)})f(xy^{(2)}) - y^{(1)}\otimes S(y^{(2)})xf(y^{(3)}) - y^{(1)}\otimes S(y^{(2)})f(x)y^{(3)}\\
	&& + y^{(1)}\otimes f(S(y^{(2)})xy^{(3)}) - y^{(1)}\otimes S(y^{(2)})f(xy^{(3)}) - y^{(1)}\otimes f(S(y^{(2)}))xy^{(3)}\\
	&& - f(y)^{(1)}\otimes S(f(y)^{(2)})xf(y)^{(3)} + f(y^{(1)}) \otimes S(y^{(2)})xy^{(3)}\\
	&& + y^{(1)}\otimes S(f(y^{(2)})^{(1)})xf(y^{(2)})^{(2)} - y^{(1)}\otimes S(f(y^{(2)})^{(1)})xf(y^{(2)})^{(2)}\\
	&& + y^{(1)}\otimes S(f(y^{(2)}))xy^{(3)} + y^{(1)} \otimes S(y^{(2)})xf(y^{(3)}) + y^{(1)}\otimes S'(y^{(2)})xy^{(3)}\\
	&=& -\delta_{\rm YBH}^1(f)(x\otimes y) - y^{(1)} \otimes f(S(y^{(2)}))xy^{(3)}\\
	&& + y^{(1)}\otimes S(f(y^{(2)}))xy^{(3)} +  y^{(1)}\otimes S'(y^{(2)})xy^{(3)}\\
	&=& \delta_{\rm YB}^1(-f)(x\otimes y),
\end{eqnarray*}
where in the last step we have used the fact that $Sf(x) - f(S(x)) + S'(x) = 0$. This shows that the map $\Psi$ induced by the correspondence $\Psi_\zeta(\xi)$ is well defined in cohomology.

To complete the proof of the theorem, we need to show that if $(\xi,\zeta)$ is a nontrivial deformation as an algebra, then its corresponding braided algebra deformation is nontrivial as well. This comes from the fact that if $(\Psi_\zeta(\xi), \xi)$ is cobounded in YBH cohomology, by definition it follows that $\xi$ is cobounded in Hochschild cohomology.
\end{proof}

\begin{example}\label{ex:group_algebra}
	{\rm 
	By Theorem~\ref{thm:classification},
	if a cocycle deformation of a braided algebra by 
	a pair of 2-cocycles
	$(\phi, \psi)$ defines a nontrivial deformation, then 
	$(\phi, \psi)$  is a non-coboundary.
		While it is known that over fields of characteristic zero there are no nontrivial 
		deformations for the group algebra \cite{ChariPressley}, for group algebras over more general rings there exist nontrivial deformations of the algebra structure 
		\cite{SiegelWith}. Therefore, the corresponding braided  algebras are deformed nontrivially.
		Hence  in such cases the  second cohomology group of YBH cohomology  is nontrivial. 
	}
\end{example}

\begin{example}\label{ex:Hopf_algebra}
	{\rm 
	Let us now consider a more general setting than that of Example~\ref{ex:group_algebra} above. For an algebraic group $G$, let $\mathcal F(G)$ denote the algebra of regular functions. Then it is known (see \cite{ChariPressley}) that 
	there are nontrivial deformations of $\mathcal F(G)$ as  a Hopf algebra,
	though the coalgebra structure has trivial deformation.
	Applying Theorem~\ref{thm:Hopf_mono}  we obtain nontrivial deformations (and therefore nontrivial elements in the braided algebra second cohomology group according to Theorem~\ref{thm:classification}) of the associated braided  algebra. 
}
\end{example}

\section{Third cohomology and quadratic deformations} \label{sec:3coh} 

In this section we consider the problem of extending an infinitesimal deformation to higher degrees in $\hbar$. In the case of associative algebras \cite{Gerst} and Lie algebras \cite{Nij-Rich}, it is well known that the obstruction to extending an infinitesimal deformation to higher degrees lies in the third cohomology group of the algebra, therefore giving a simple criterion for the existence of such extensions. In fact, if the third cohomology group is trivial, it follows that any infinitesimal deformation can be extended. In this situation it is also said that the deformation is {\it integrable}. This holds true also when dealing with YB cohomology \cite{Eisermann}. 

First we define the third cohomology groups for braided algebras,
and then give their interpretation as obstructions to deformations.

\subsection{The third differentials}

Let $V$ be a
braided
 algebra.
 We define
 $$ C^4_{\rm YI}(V,V) =
 C^{4,4}_{\rm YBH}(V,V)  \oplus 
C^{4,3}_{\rm YI}(V,V)  \oplus 
C^{4,2}_{\rm YI}(V,V)  \oplus 
 C^{4,2}_{\rm IY}(V,V) \oplus
C^{4,1}_{\rm YBH}(V,V)  .
$$
We define $\delta^3_{\rm YI} (V,V) : C^3_{\rm YI} (V,V) \rightarrow  C^4_{\rm YI} (V,V)$ by 
	$\delta^3_{\rm YI} = \delta^3_{\rm YB} \oplus \delta^{3,1}_{\rm YI}  \oplus  \delta^{3,2}_{\rm YI}
	\oplus \delta^{3,3}_{\rm YI}   \oplus \delta^3_{\rm H}$, where each direct summand of the differential is defined as follows.
For $\alpha \in  C^{3,2}_{\rm YI}(V,V)$ and $\beta \in C^{3,3}_{\rm YBH}(V,V)$, 
define 
$\delta^{3,1}_{\rm YI} : C^{3,3}_{\rm YBH}(V,V)  \oplus C^{3,2}_{\rm YI}(V,V)
\rightarrow C^{4,3}_{\rm YI}(V,V)   $
by
\begin{eqnarray*}
\delta^{3,1}_{\rm YI}(\beta \oplus \alpha  ) 
&=& 
( {\mathbb 1} \otimes R ) ( \alpha \otimes {\mathbb 1} ) ( {\mathbb 1}^{\otimes 2}  \otimes R )
+( {\mathbb 1} \otimes \alpha ) ( R \otimes {\mathbb 1}^{\otimes 2} ) (  {\mathbb 1} \otimes R \otimes {\mathbb 1} ) ( {\mathbb 1}^{\otimes 2}  \otimes  R ) 
\\ & & 
+
( {\mathbb 1}^{\otimes 2}  \otimes   \mu ) (    {\mathbb 1} \otimes R \otimes {\mathbb 1} ) 
( R \otimes {\mathbb 1}^{\otimes 2} ) ( {\mathbb 1} \otimes \beta) 
+ ( {\mathbb 1}^{\otimes 2}  \otimes   \mu ) ( \beta \otimes  {\mathbb 1} )
( {\mathbb 1}^{\otimes 2}  \otimes   R ) (   {\mathbb 1} \otimes R \otimes {\mathbb 1} ) \\
& & - 
\beta ( \mu \otimes  {\mathbb 1}^{\otimes 2} ) 
-
  ({\mathbb 1} \otimes R )( R \otimes     {\mathbb 1} ) (\alpha \otimes {\mathbb 1} ) - (R\otimes \mathbb 1)(\mathbb 1\otimes \alpha)(R\otimes \mathbb 1^{\otimes 2})(\mathbb 1\otimes R\otimes \mathbb 1) .
 \end{eqnarray*}
Define 
 $C^{3,2}_{\rm YI}(V,V) \oplus C^{3,2}_{\rm IY}(V,V)
\rightarrow C^{4,2}_{\rm IY}(V,V)
$
for $\alpha \in C^{3,2}_{\rm YI}(V,V)$ and $\alpha ' \in C^{3,2}_{\rm IY}(V,V)$ 
by
 \begin{eqnarray*}
\delta^{3,2}_{\rm YI}( \alpha \oplus \alpha ') 
&=& 
\alpha ' ( \mu \otimes     {\mathbb 1}^{\otimes 2} ) 
+ 
( \mu \otimes  {\mathbb 1} )  (   {\mathbb 1} \otimes R ) (\alpha  \otimes     {\mathbb 1} )
+ 
(  {\mathbb 1} \otimes \alpha ) ( \mu \otimes     {\mathbb 1} )
(   R \otimes {\mathbb 1}^{\otimes 2} ) 
(   {\mathbb 1} \otimes R \otimes {\mathbb 1} ) \\
& & 
- \alpha (   {\mathbb 1}^{\otimes 2} \otimes     \mu ) 
- 
(  {\mathbb 1} \otimes \mu ) ( R \otimes  {\mathbb 1} ) (  {\mathbb 1} \otimes \alpha '   ) 
-
( \alpha '  \otimes {\mathbb 1} ) (  {\mathbb 1} \otimes \mu )
(  {\mathbb 1}^{\otimes 2}  \otimes R )
(    {\mathbb 1} \otimes R \otimes {\mathbb 1} ) . 
 \end{eqnarray*}
Define further
 $\delta^{3,3}_{\rm YI} : C^{3,2}_{\rm YBH}(V,V)  \oplus C^{3,1}_{\rm YBH}(V,V)
\rightarrow C^{4,2}_{\rm YBH}(V,V)   $
for $\alpha \in C^{3,2}_{\rm YBH}(V,V) $ and $\gamma \in  C^{3,1}_{\rm YBH}(V,V)$
by
\begin{eqnarray*}
\delta^{3,3}_{\rm YI}(\alpha \oplus \gamma) 
&=& R (\gamma \otimes {\mathbb 1}) 
+\alpha  ( {\mathbb 1} \otimes \mu \otimes  {\mathbb 1}  )  + ( {\mathbb 1} \otimes  \mu )(R \otimes  {\mathbb 1}) ( {\mathbb 1} \otimes \alpha ) \\
& & 
-\alpha (\mu \otimes  {\mathbb 1}^{\otimes 2} ) 
- ({\mathbb 1} \otimes \mu)  (\alpha \otimes {\mathbb 1})  ( {\mathbb 1}^{\otimes 2} \otimes R)
- ( {\mathbb 1} \otimes \gamma) (R \otimes  {\mathbb 1}^{\otimes 2} ) (  {\mathbb 1} \otimes R \otimes {\mathbb 1}) ( {\mathbb 1}^{\otimes 2} \otimes R) 
.
\end{eqnarray*}

The differentials $\delta^{3,i}_{\rm YI}$, $i=1,2,3$, have  diagrammatic representations depicted in 
Figures~\ref{YII},  \ref{YYI}, and \ref{IYY}, respectively.
In Figure~\ref{YII}, from the top initial diagram labeled (I) to the bottom terminal figure labeled 
(T) there are two sequences of equalities that use Equation~\eqref{eqn:YI} . 
The two sequences are labeled for left hand side and right hand side by $(L1)$, $(L2)$, $(L3)$ and $(R1)$, $(R2)$, 
respectively.
Between (I) and (L1), a trivalent vertex goes through an over-arc at a crossing, introducing two 
crossings in (L1). At the moment when the trivalent vertex and a crossing merge, 
a 5-valent vertex appears as depicted in (LD1) marked by a small circle.
This circled 5-valent vertex with top 3 edges and bottom 2 segments represents 
$\alpha \in  C^{3,2}_{\rm YBH}(V,V)={\rm Hom}_{\mathbb k}(V^{\otimes 3},V^{\otimes 2})$.
When the diagram of (LD1) is read from top to bottom, there is a crossing at the top with two parallel lines to its left. This represents the first factor $( {\mathbb 1} \otimes R )$
in the first term of 
$\delta^{3,1}_{\rm YI}(\beta \oplus \alpha) $.
The second factor $( \alpha \otimes {\mathbb 1} ) $ represents the 5-valent vertex with one string to the right,
and the last factor $(  {\mathbb 1}^{\otimes 2}  \otimes R )$
represents the crossing at the bottom of (LD1). 
The four positive terms in $\delta^{3,1}_{\rm YI}(\beta \oplus \alpha) $
are represented by (LD$n$) for $n=1,2,3,4$ in this order, and the negative terms are represented by 
the right hand side diagrams (RD$n$) for $n=1,2$.

\begin{figure}[htb]
\begin{center}
\includegraphics[width=3in]{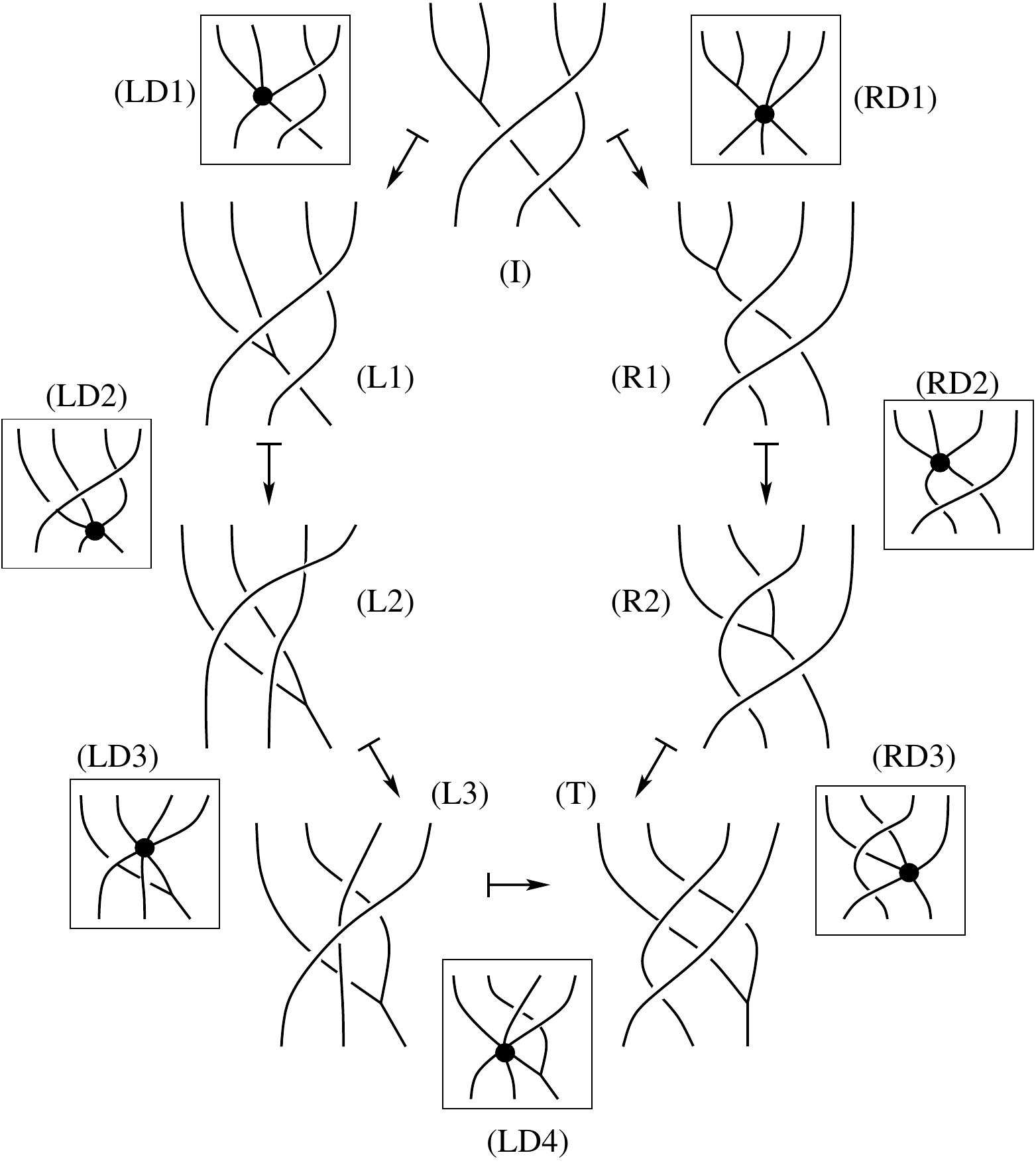}
\end{center}
\caption{}
\label{YII}
\end{figure}

\begin{figure}[htb]
\begin{center}
\includegraphics[width=3in]{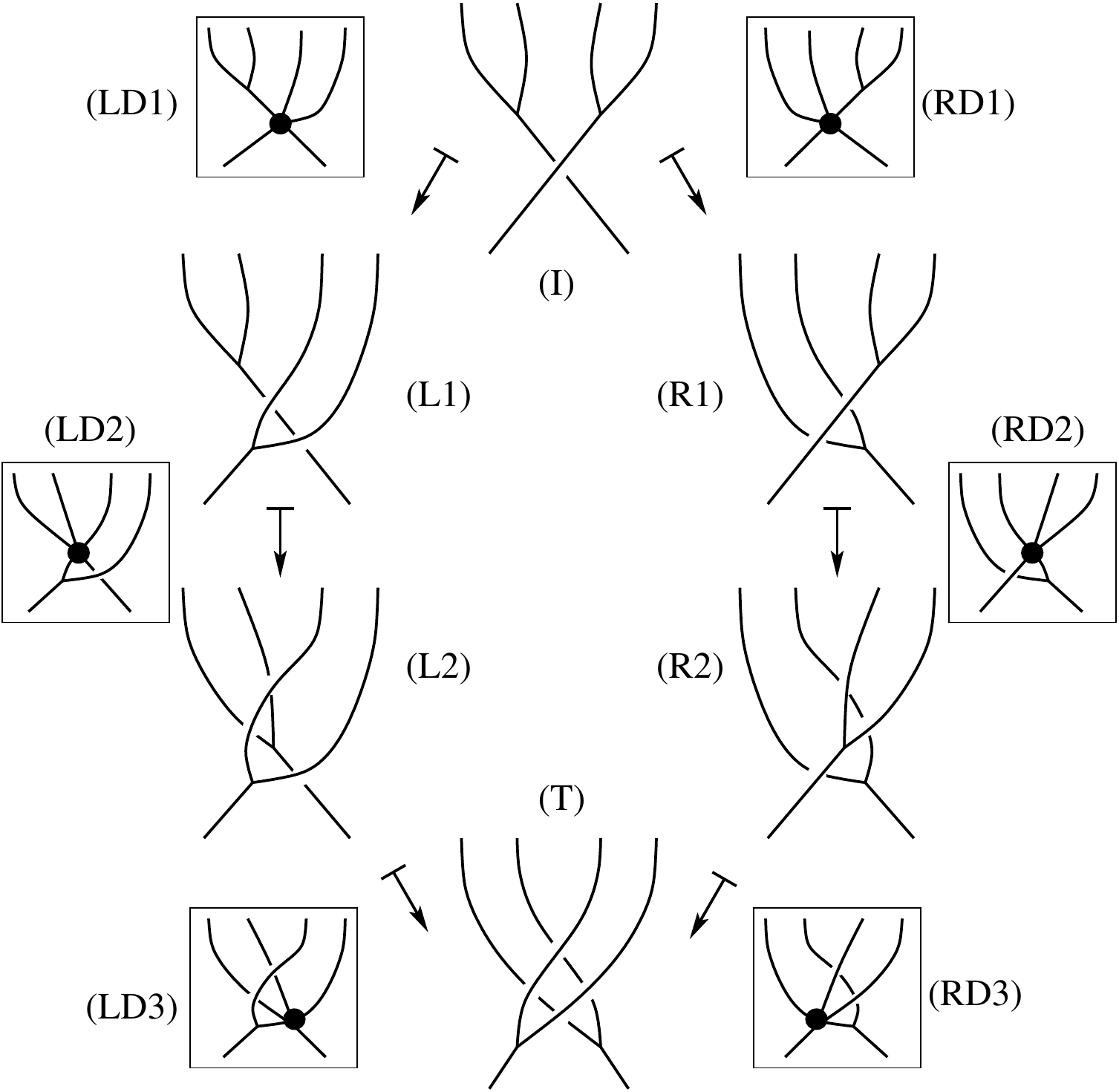}
\end{center}
\caption{}
\label{YYI}
\end{figure}

\begin{figure}[htb]
\begin{center}
\includegraphics[width=3in]{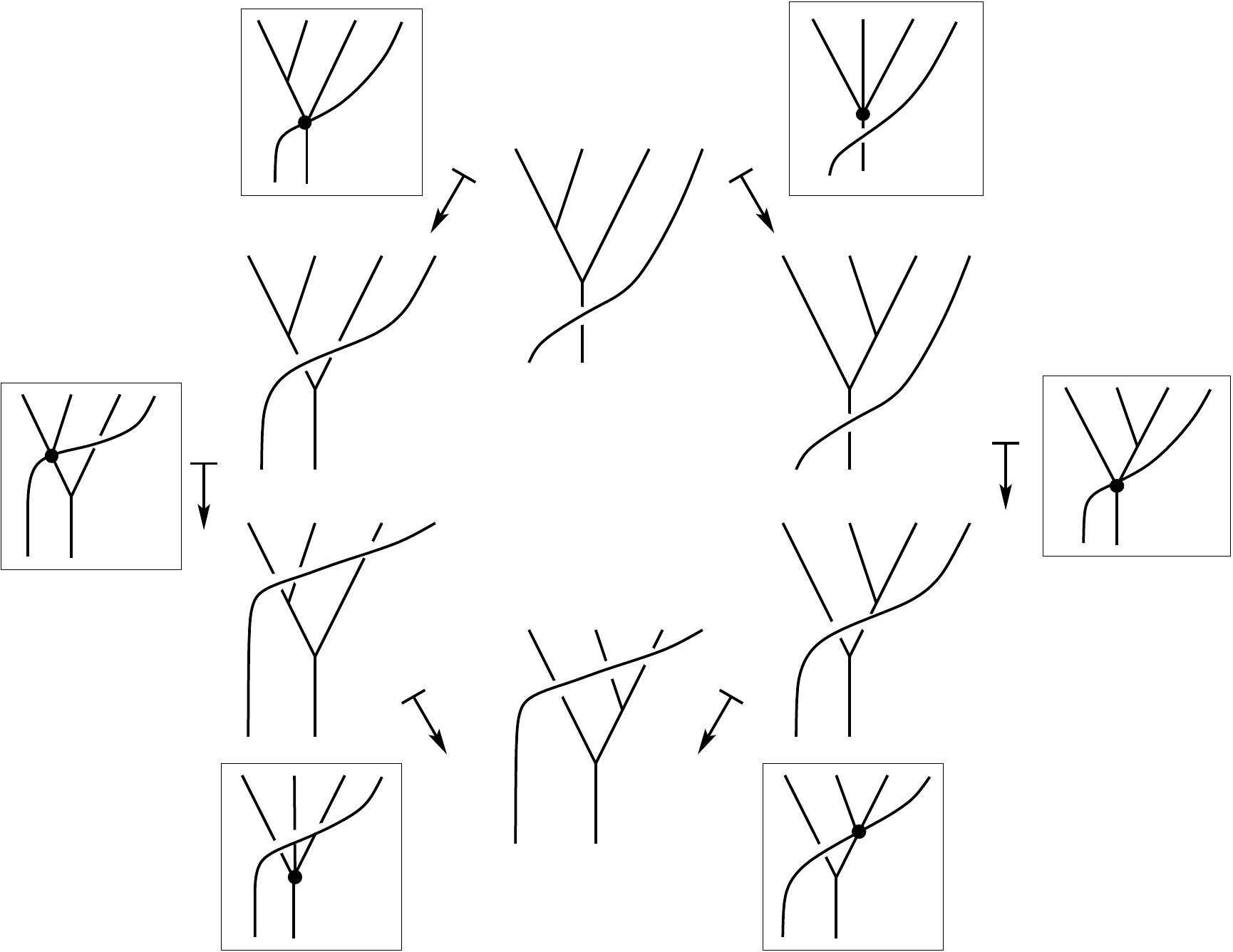}
\end{center}
\caption{}
\label{IYY}
\end{figure}

\begin{proposition}
The sequence
$$  C_{\rm YBH}^2(V,V) 
\stackrel{\delta^2_{\rm YBH}}{\longrightarrow}
C_{\rm YBH}^3(V,V) 
\stackrel{\delta^3_{\rm YI}}{\longrightarrow}
 C_{\rm YI}^4(V,V) 
 $$
 forms a cochain complex.
\end{proposition}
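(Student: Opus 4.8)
The goal is to prove $\delta^3_{\rm YI}\circ\delta^2_{\rm YBH}=0$, that is, that every element in the image of $\delta^2_{\rm YBH}$ is a $3$-cocycle for $\delta^3_{\rm YI}$. My plan is to exploit the direct-sum decomposition of both differentials and reduce the identity to independent verifications, one for each summand of $C^4_{\rm YI}(V,V)$. Writing a general $2$-cochain as $c=(\phi\oplus\psi)$ with $\phi\in C^{2,2}_{\rm YBH}(V,V)$ and $\psi\in C^{2,1}_{\rm YBH}(V,V)$, the four components of $\delta^2_{\rm YBH}(c)$ are $\delta^2_{\rm YB}(\phi)$, $\delta^2_{\rm YI}(\phi\oplus\psi)$, $\delta^2_{\rm IY}(\phi\oplus\psi)$ and $\delta^2_{\rm H}(\psi)$. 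Tracking which of these feed into each summand $\delta^3_{\rm YB}$, $\delta^{3,1}_{\rm YI}$, $\delta^{3,2}_{\rm YI}$, $\delta^{3,3}_{\rm YI}$, $\delta^3_{\rm H}$ shows that the $C^{4,4}_{\rm YBH}$-component of the composite is $\delta^3_{\rm YB}\delta^2_{\rm YB}(\phi)$ and the $C^{4,1}_{\rm YBH}$-component is $\delta^3_{\rm H}\delta^2_{\rm H}(\psi)$; these vanish by the classical fact that pure Hochschild cohomology \cite{Gerst} and pure Yang-Baxter deformation cohomology \cite{Eisermann} are cochain complexes (our $\delta_{\rm YB}$ agreeing with Eisermann's up to the reindexing noted in the remark following Lemma~\ref{lem:R}). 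So the content is entirely in the three mixed components involving $\delta^{3,1}_{\rm YI}$, $\delta^{3,2}_{\rm YI}$ and $\delta^{3,3}_{\rm YI}$.

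For the three mixed components I would argue diagrammatically, exactly in the style of Lemma~\ref{lem:R} and Proposition~\ref{pro:cochain}. In the diagrams for $\delta^{3,1}_{\rm YI}$, $\delta^{3,2}_{\rm YI}$, $\delta^{3,3}_{\rm YI}$ (Figures~\ref{YII},~\ref{YYI},~\ref{IYY}) each circled $4$- or $5$-valent vertex stands for one of the second-differential outputs $\beta=\delta^2_{\rm YB}(\phi)$, $\alpha=\delta^2_{\rm YI}(\phi\oplus\psi)$, $\alpha'=\delta^2_{\rm IY}(\phi\oplus\psi)$, $\gamma=\delta^2_{\rm H}(\psi)$. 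Substituting the defining expression of the relevant $\delta^2$ replaces each circled vertex by its linear combination of diagrams carrying a single circled $\phi$- or $\psi$-vertex on a crossing or a trivalent vertex. Expanding, one then matches the resulting diagrams in pairs: as in the earlier lemmas, each internal edge of a composite is shared by two of the higher terms, producing two identical diagrams with opposite signs, while terms carrying a vertex on an outermost strand are paired after a single application of the Yang-Baxter equation, Equation~\eqref{eqn:YI}, or Equation~\eqref{eqn:IY}. Because the whole identity is symmetric under the YI$\leftrightarrow$IY exchange, it suffices to carry out the cancellation for, say, $\delta^{3,1}_{\rm YI}$ and $\delta^{3,3}_{\rm YI}$ and then invoke this symmetry for the remaining summand.

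As an organizing principle and independent consistency check, I would keep in mind the deformation-theoretic meaning of the maps: by Lemma~\ref{lem:IYIdeform} the four components of $\delta^2$ are precisely the order-$\hbar$ coefficients of the associativity, YBE, YI and IY axioms for $(\mu+\hbar\psi,\,R+\hbar\phi)$, while the summands of $\delta^3_{\rm YI}$ are the linear parts of the corresponding order-$\hbar^2$ obstruction equations (the same expressions appearing in Theorem~\ref{lem:higher}). From this viewpoint $\delta^3_{\rm YI}\delta^2_{\rm YBH}=0$ is a Bianchi-type identity asserting that the order-$\hbar$ failures automatically satisfy the linearized second-order constraints; it can be derived uniformly from the structure equations for $\mu$ and $R$ via a Gerstenhaber-bracket computation of the schematic form $D^2=\tfrac12[[m,m],-]=0$. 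Since the paper commits to explicit low-dimensional formulas rather than the full bracket formalism, however, the diagrammatic cancellation above is the cleaner rigorous route, and the bracket picture serves only to predict that the cancellation must close.

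I expect the main obstacle to be the bookkeeping and signs in the $\delta^{3,1}_{\rm YI}$ component. It is the most involved of the three because its input mixes a genuine Yang-Baxter output $\beta=\delta^2_{\rm YB}(\phi)$ with a mixed output $\alpha=\delta^2_{\rm YI}(\phi\oplus\psi)$, so after substitution one must separately account for the cross-terms coupling the algebra deformation $\psi$ to the braiding deformation $\phi$. Ensuring that these cross-terms cancel, rather than the purely-$\phi$ or purely-$\psi$ terms (which already cancel by the pure YB and Hochschild identities), requires care in matching the seven summands of $\delta^{3,1}_{\rm YI}$ against the expansions, and in checking that each appeal to Equation~\eqref{eqn:YI} is legitimate for the deformed strand configuration appearing in Figure~\ref{YII}.
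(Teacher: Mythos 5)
Your overall architecture coincides with the paper's: decompose both differentials along their direct summands, dispose of the pure pieces $\delta^3_{\rm YB}\delta^2_{\rm YB}(\phi)$ and $\delta^3_{\rm H}\delta^2_{\rm H}(\psi)$ separately, and prove the mixed pieces by substituting the outputs of $\delta^2_{\rm YBH}$ into the summands of $\delta^3_{\rm YI}$ and cancelling diagrams in pairs, exactly in the style of Lemma~\ref{lem:R} and Proposition~\ref{pro:cochain}. Indeed the paper's own proof does precisely this, carrying out the cancellation explicitly for the summand $\delta^{3,2}_{\rm YI}$ (Figure~\ref{YYIdiff2}) and declaring the remaining cases similar; your handling of the two pure summands by citing the classical Hochschild and Yang--Baxter facts is a legitimate substitute, provided you also check that the $R$-twist identification of cochains mentioned in the remark after Lemma~\ref{lem:R} intertwines the degree~$2$ and degree~$3$ differentials.

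There is, however, a genuine gap in your execution plan: the ${\rm YI}\leftrightarrow{\rm IY}$ symmetry does not do what you claim. That exchange is implemented by reversing the order of tensor factors, which sends the braided algebra $(V,\mu,R)$ to its opposite and sends each summand of $\delta^3_{\rm YI}$ to the corresponding summand of the \emph{other} differential $\delta^3_{\rm IY}$ --- which is not part of the statement being proved. In particular it cannot convert your verifications for $\delta^{3,1}_{\rm YI}$ and $\delta^{3,3}_{\rm YI}$ into the identity for the skipped summand $\delta^{3,2}_{\rm YI}$: reversal preserves the bidegree of a cochain, and the three mixed summands have inputs of pairwise different types, namely $C^{3,3}_{\rm YBH}\oplus C^{3,2}_{\rm YI}$ for $\delta^{3,1}_{\rm YI}$, $C^{3,2}_{\rm YBH}\oplus C^{3,1}_{\rm YBH}$ for $\delta^{3,3}_{\rm YI}$, but $C^{3,2}_{\rm YI}\oplus C^{3,2}_{\rm IY}$ for $\delta^{3,2}_{\rm YI}$, so no reflection can match one summand's identity with another's. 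The component your plan omits is therefore left unproven, and it is the most delicate one --- the only summand both of whose inputs are the mixed outputs $\delta^2_{\rm YI}(\phi\oplus\psi)$ and $\delta^2_{\rm IY}(\phi\oplus\psi)$, and exactly the case the paper chooses to verify in detail, where the cancellation requires matching terms coming from the YI expansion against terms coming from the IY expansion (such as the pair labelled (2-1) and (2-5) in Figure~\ref{YYIdiff2}). The repair is routine, since the substitution-and-cancellation method you describe applies verbatim to $\delta^{3,2}_{\rm YI}$, but as written your proof does not close.
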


\begin{proof}
We show that 
$\delta^{3,2}_{\rm YI}  (\alpha \oplus \alpha' ) = 0$,
$\alpha = \delta^2_{\rm YI} (\psi \oplus \phi)$ and  $\alpha'=\delta^2_{\rm IY} (\psi \oplus \phi)$. 
We use the diagrammatics in Figure~\ref{YYIdiff2}. 
The 3-cochain 
$\alpha ' $
 is represented by a circled 4-valent (3 in, 1 out) vertex in the diagram (1). 
The assumption
$\alpha ' = \delta^2_{\rm IY }(\psi \oplus \phi )$ 
 is represented by the right side of the figure with labels
(1-1), (1-2), (2-1), (2-2) and (2-3), where the linear combination of the terms 
(1-1) $ + $ (1-2) $ - $ (2-1) $ -$  (2-2) $ -$ (2-3) represents 
 the assumed 2-cocycle condition. 
Similarly (2) represents $\alpha$ and 
(2-4) $+$ (2-5) $-$ (3-1) $-$ (3-2) $-$ (3-3) represents $\alpha = \delta^2_{\rm YI}(\psi \oplus \phi)$.
One finds the canceling pair (2-1) and (2-5). 
One finds each circled 4-valent  vertex  appearing in the same position twice, they cancel 
in pairs, indicating 
$\delta^{3,2}_{\rm IY}  (\alpha \oplus \alpha ' ) = 0 $.
The other cases are similar.
\end{proof}

Similar definitions where the IY condition is used instead of the YI condition give a differential $\delta^3_{\rm IY}$. As it will be proved below, they contain the obstructions to higher order deformations corresponding to Equation~\eqref{eqn:YI} and Equation~\eqref{eqn:IY}, respectively.

\begin{figure}[htb]
\begin{center}
\includegraphics[width=3in]{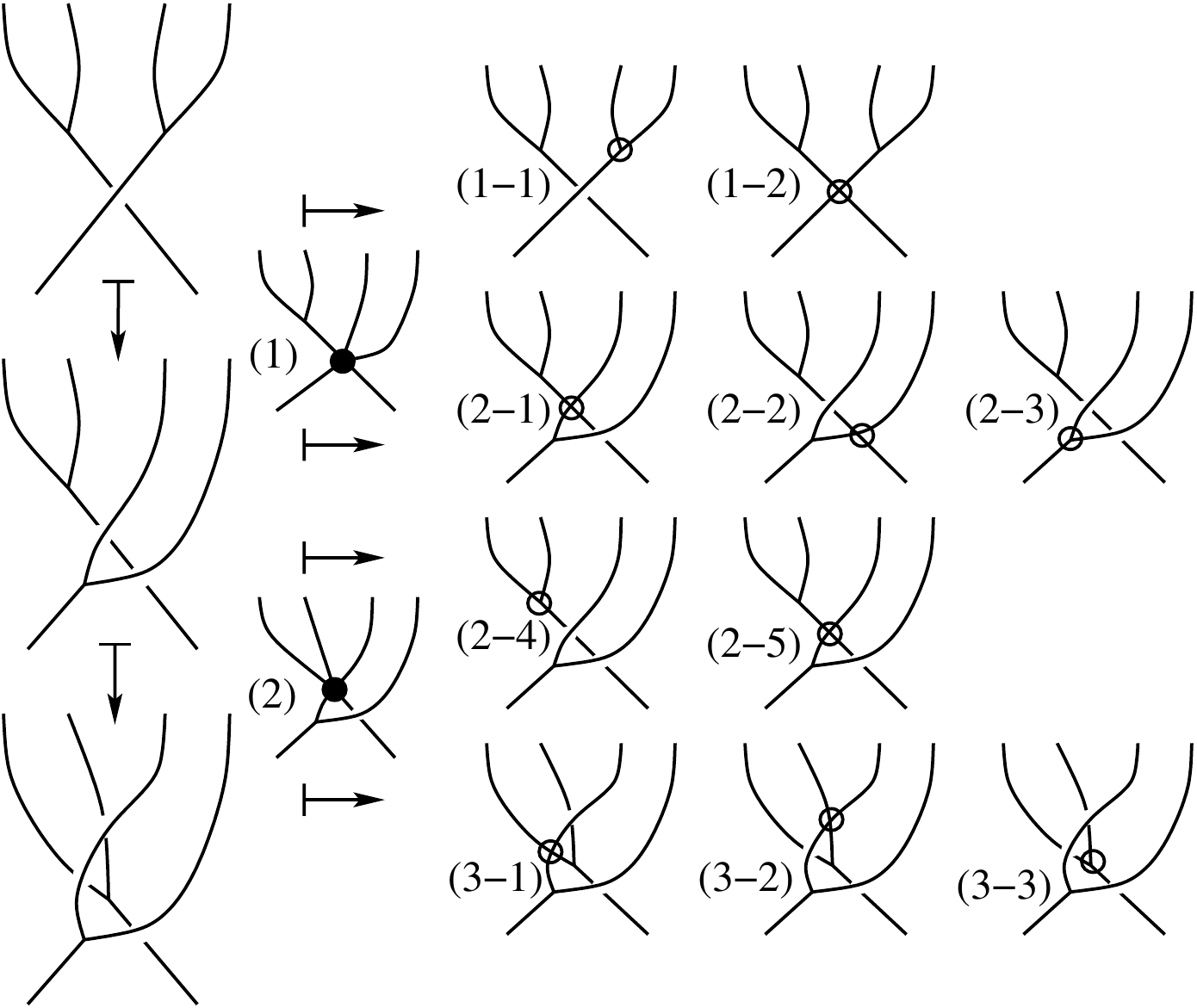}
\end{center}
\caption{}
\label{YYIdiff2}
\end{figure}

\subsection{Quadratic deformation for Hochschild cohomology}

In this section we review the second degree deformations and the third cohomology group 
of associative algebras \cite{Gerst}. 
These methods  are applied in the following section to develop an analogous theory for Yang-Baxter Hochschild cohomology. We provide a diagrammatic proof of a known result   in Appendix~\ref{AppendixA}, 
which will be used to prove  the results of Section~\ref{sec:quadratic_deform}.

	Let $(V,\mu)$ be an algebra over ${\mathbb k}$, and let $\psi=\psi_1$ denote a Hochschild $2$-cocycle,
	$\psi_1 \in Z^2_{\rm H}(V,V)$. 
As before we set  $(\tilde V, \tilde \mu)$, where $\tilde V= V \otimes {\mathbb k}[[\hbar]] /(\hbar^2)$
and $\tilde \mu = \mu + \hbar \psi_1$.
Next we consider $(\hat V, \hat \mu)$ where 
$\hat V= V \otimes {\mathbb k}[[\hbar]]/(\hbar^3)$ and 
 $\hat  \mu = \mu + \hbar \psi_1 + \hbar^2 \psi_2$.
	One computes the degree two term of $\hbar$ to be 
\begin{eqnarray*}
& & \hbar^2 [ \{  \mu (\psi_2 \otimes \mathbb{1} ) +  \psi_2 (\mu \otimes \mathbb{1} ) 
-  \mu (\mathbb{1}  \otimes \psi_2) -  \psi_2 (\mathbb{1}  \otimes \mu) ) \} 
+ \{ \psi_1 (\psi_1 \otimes \mathbb{1} ) - \psi_1 (\mathbb{1}  \otimes \psi_1) ) \} ]
  \\
& & = 
\hbar^2 [ \delta^2 (\psi_2) + \{  \psi_1 (\psi_1 \otimes \mathbb{1} ) - \psi_1 (\mathbb{1}  \otimes \psi_1) \} ] .
\end{eqnarray*}
Set $\Theta_2 (\psi_1) = \psi_1 (\psi_1 \otimes \mathbb{1} ) - \psi_1 (\mathbb{1}  \otimes \psi_1)$.
Then this computation  shows the following.

\begin{lemma} \label{lem:Hdeform2}
 The above defined $(\hat V, \hat \mu)$ is an algebra such that 
 $\hat \mu \equiv_{(\hbar^2)} \mu$
 if and only if 	$\delta^2 (\psi_2) +\Theta_2 (\psi_1) =0$. 
\end{lemma}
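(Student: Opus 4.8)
The plan is to verify this by a direct computation of the associativity constraint for $\hat\mu$ up to order $\hbar^2$, exactly mirroring the order-$\hbar^1$ computation that was done earlier in the excerpt for the infinitesimal case. The statement claims that $\hat\mu = \mu + \hbar\psi_1 + \hbar^2\psi_2$ is associative modulo $(\hbar^3)$ precisely when $\delta^2(\psi_2) + \Theta_2(\psi_1) = 0$. Since the degree-$\hbar^2$ coefficient of the associativity obstruction has already been computed in the lines immediately preceding the statement, the real content is simply to organize this computation into an if-and-only-if.

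First I would write out the associativity condition $\hat\mu(\hat\mu \otimes \mathbb 1) = \hat\mu(\mathbb 1 \otimes \hat\mu)$ and expand both sides, collecting powers of $\hbar$. The order-$\hbar^0$ term reproduces the associativity of $\mu$, which holds by hypothesis. The order-$\hbar^1$ term is precisely the Hochschild $2$-cocycle condition $\delta^2(\psi_1) = 0$, which holds since $\psi_1 \in Z^2_{\rm H}(V,V)$ is assumed to be a cocycle. Thus both of these lower-order terms vanish automatically, and the only remaining constraint, working modulo $(\hbar^3)$, is that the order-$\hbar^2$ coefficient vanishes.

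Next I would identify that order-$\hbar^2$ coefficient. Multiplying out $(\mu + \hbar\psi_1 + \hbar^2\psi_2)$ against itself in each composition and extracting the $\hbar^2$ part yields exactly the two bracketed groups displayed before the lemma: the terms linear in $\psi_2$ assemble into $\delta^2(\psi_2)$ (using the sign convention for $\delta^2_{\rm H}$ fixed earlier), while the terms quadratic in $\psi_1$ assemble into $\Theta_2(\psi_1) = \psi_1(\psi_1 \otimes \mathbb 1) - \psi_1(\mathbb 1 \otimes \psi_1)$. The condition that $\hat\mu$ be associative modulo $(\hbar^3)$ is therefore equivalent to $\delta^2(\psi_2) + \Theta_2(\psi_1) = 0$, and the condition $\hat\mu \equiv_{(\hbar^2)} \mu$ is built into the ansatz, giving the claimed equivalence in both directions.

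There is essentially no hard obstacle here, since the preceding display has already carried out the key algebraic expansion; the lemma is really a repackaging of that computation. The only point requiring a little care is bookkeeping of signs in the definition of $\delta^2_{\rm H}$, given the remark earlier that this paper uses the opposite sign convention from the usual Hochschild differential, and confirming that the terms genuinely regroup as $\delta^2(\psi_2)$ rather than its negative. Beyond that, the ``only if'' direction follows because each power of $\hbar$ must vanish independently in $\hbar[[\hbar]]/(\hbar^3)$, so associativity forces the $\hbar^2$ coefficient to be zero.
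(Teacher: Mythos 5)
Your proposal is correct and follows essentially the same route as the paper: the paper's own justification is precisely the displayed expansion of $\hat\mu(\hat\mu\otimes\mathbb 1)=\hat\mu(\mathbb 1\otimes\hat\mu)$ in powers of $\hbar$, with the degree-$0$ and degree-$1$ terms vanishing by associativity of $\mu$ and the cocycle condition on $\psi_1$, and the degree-$2$ coefficient identified as $\delta^2(\psi_2)+\Theta_2(\psi_1)$. Your additional care about the paper's sign convention for $\delta^2_{\rm H}$ is a reasonable check but does not change the argument.
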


 In this case we say that $(\hat V, \hat \mu)$ is a 
 degree 2 deformation of $(V, \mu)$.
Next, we recall the following.

\begin{lemma}[\cite{Gerst}] \label{lem:H3cocy}
$\delta^3_{\rm H}\Theta_2 (\psi_1) =0$. 
\end{lemma}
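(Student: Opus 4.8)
The plan is to identify $\Theta_2(\psi_1)$ with the Gerstenhaber self-composition of $\psi_1$ and then to deduce the vanishing of $\delta^3_{\rm H}\Theta_2(\psi_1)$ from the hypotheses that $\mu$ is associative and that $\psi_1$ is a $2$-cocycle. Recall the Gerstenhaber circle product, which inserts a cochain into each argument slot of another with the standard signs; for two arity-$2$ cochains $f,g$ it reads $f\circ g = f(g\otimes\mathbb 1)-f(\mathbb 1\otimes g)$. Comparing with the definition in the statement gives immediately
\[
\Theta_2(\psi_1)=\psi_1(\psi_1\otimes\mathbb 1)-\psi_1(\mathbb 1\otimes\psi_1)=\psi_1\circ\psi_1,
\]
and, since $\psi_1$ has (shifted) odd degree, the Gerstenhaber bracket satisfies $[\psi_1,\psi_1]=2\,\psi_1\circ\psi_1=2\,\Theta_2(\psi_1)$. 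Thus it is enough to prove $\delta^3_{\rm H}[\psi_1,\psi_1]=0$.

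For the core of the argument I would work on the shifted complex $\bigoplus_n{\rm Hom}(V^{\otimes n},V)$ equipped with the Gerstenhaber bracket. Under this structure the Hochschild differential is, up to an overall sign that plays no role here, bracketing with the multiplication, $\delta^\bullet_{\rm H}(-)=\pm[\mu,-]$. Associativity of $\mu$ then becomes exactly $[\mu,\mu]=0$, while the hypothesis $\psi_1\in Z^2_{\rm H}(V,V)$ becomes exactly $[\mu,\psi_1]=0$. Applying the graded Jacobi identity to the triple $(\mu,\psi_1,\psi_1)$, two of its three terms contain the inner bracket $[\mu,\psi_1]$ (using graded antisymmetry to rewrite $[\psi_1,\mu]$) and hence vanish, leaving $[\mu,[\psi_1,\psi_1]]=0$. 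Translating back, this is precisely $\delta^3_{\rm H}[\psi_1,\psi_1]=0$, and dividing by $2$ yields $\delta^3_{\rm H}\Theta_2(\psi_1)=0$.

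In keeping with the diagrammatic spirit of the paper, I would then also give a hands-on verification: expanding $\delta^3_{\rm H}\Theta_2(\psi_1)$ by the definition of $\delta^3_{\rm H}$ produces ten terms, each an outer application of $\psi_1$ to an argument carrying one nested $\psi_1$ together with at most one copy of $\mu$. Applying the $2$-cocycle identity $\mu(\psi_1\otimes\mathbb 1)+\psi_1(\mu\otimes\mathbb 1)=\mu(\mathbb 1\otimes\psi_1)+\psi_1(\mathbb 1\otimes\mu)$ to the nested $\psi_1$ in each term (with the appropriate composite inputs in the three slots) makes the summands cancel in pairs; the pairing is the one predicted by the pre-Lie identity for $\circ$, so the bracket computation of the previous paragraph serves as a bookkeeping guide.

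The main obstacle is entirely organizational rather than conceptual: managing the signs and the nested arguments so that the cancellations induced by the $2$-cocycle relation are transparent. I would rely on the Gerstenhaber-bracket argument to predict exactly which terms annihilate one another and carry out the explicit substitutions (or their diagrammatic analogues) only as confirmation. The single point needing care is that the paper's $\delta^2_{\rm H}$, and therefore $\delta^3_{\rm H}$, carries the opposite sign from the classical Hochschild differential; since the conclusion asserts that a cochain is zero, this global sign is immaterial, but it must be tracked consistently when matching explicit terms against the bracket calculation.
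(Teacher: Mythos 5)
Your route is genuinely different from the paper's: the paper proves this lemma purely diagrammatically (Appendix~A), by substituting circled vertices into the pentagon diagram that encodes the Hochschild $3$-cocycle condition and cancelling all resulting terms in pairs via the $2$-cocycle condition for $\psi_1$; you instead invoke the Gerstenhaber bracket, the identification $\delta_{\rm H}=\pm[\mu,-]$, and the graded Jacobi identity. Your framework is of course the classical one going back to Gerstenhaber, and it correctly predicts the cancellation pattern; the paper reproves the classical statement diagrammatically only because the same diagrammatic bookkeeping is then reused in Appendix~B for the braided (YBH) analogue, where no bracket formalism is available.

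However, your main argument has a concrete gap at the paper's level of generality. The paper works over an arbitrary unital commutative ring $\mathbb k$, and your chain of deductions is: Jacobi $\Rightarrow[\mu,[\psi_1,\psi_1]]=0$, then $[\psi_1,\psi_1]=2\,\psi_1\circ\psi_1=2\,\Theta_2(\psi_1)$, hence $2\,\delta^3_{\rm H}\Theta_2(\psi_1)=0$, and finally ``dividing by $2$.'' That last step fails unless $2$ is invertible (or at least a non-zero-divisor on ${\rm Hom}(V^{\otimes 4},V)$); in characteristic $2$ the identity $2\,\delta^3_{\rm H}\Theta_2(\psi_1)=0$ carries no information. (The same caveat affects your aside that associativity ``is'' $[\mu,\mu]=0$: associativity is $\mu\circ\mu=0$, which is strictly stronger in characteristic $2$, though this is not actually used in your argument.) The integral fix is to bypass the bracket and Jacobi and use instead Gerstenhaber's homotopy-commutativity formula, which measures the failure of $\delta_{\rm H}$ to be a derivation of $\circ$ by a cup-product commutator: for $f\in C^m_{\rm H}(V,V)$, $g\in C^n_{\rm H}(V,V)$,
$$\delta_{\rm H}(f\circ g)\;=\;(-1)^{n-1}(\delta_{\rm H}f)\circ g\;+\;f\circ(\delta_{\rm H}g)\;+\;(-1)^{n-1}\bigl(g\cup f-(-1)^{mn}f\cup g\bigr),$$
with signs as in Gerstenhaber's paper (their exact form is immaterial here). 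Taking $f=g=\psi_1$, so that $m=n=2$, $mn$ is even, and $\delta_{\rm H}\psi_1=0$, every term on the right vanishes and one gets $\delta^3_{\rm H}(\psi_1\circ\psi_1)=0$ with no division by $2$. Alternatively, the ``hands-on'' cancellation you sketch at the end is itself a complete proof valid over any $\mathbb k$ --- it is exactly what the paper does in diagrammatic form --- but in your write-up it is relegated to a consistency check rather than carrying the burden of proof.
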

 
By this lemma,  $\Theta_2 (\psi_1) $ is a Hochschild 3-cocycle, representing an element 
$[\Theta_2 (\psi_1) ] \in H^3_{\rm H}(V,V)$.
If  $\delta^2 (\psi_2) +\Theta_2 (\psi_1) =0$ as in Lemma~\ref{lem:Hdeform2},
then $\Theta_2 (\psi_1) $ is a coboundary, $[\Theta_2 (\psi_1) ] =0$, and the degree 1 deformation $(\tilde V, \tilde \mu)$
of $(V, \mu)$ further deforms to $(\hat V, \hat \mu)$.
In particular, if $H^3(V,V)=0$, then $(V, \mu)$  deforms to $(\hat V, \hat \mu)$.
Thus $H^3(V,V)$ can be regarded as obstruction to quadratic deformation.

We provide a diagrammatic proof of Lemma~\ref{lem:H3cocy} in Appendix~\ref{AppendixA}. We use similar techniques for the third Yang-Baxter Hochschild cohomology group in the following section.

	Lastly, we hereby recall the obstruction to higher degree deformations for associative algebras, as it motivates some of the results found below (see Theorem~\ref{lem:higher}). 
		In fact, suppose $(A,\mu)$ is an algebra and $\hat \mu = \sum_{i=0}^k \hbar^i\mu_i$ is a deformation of degree $k$ (here $\mu_0 = \mu$). Consider the degree $k+1$ deformation $\tilde \mu = \hat \mu + \hbar^{k+1}\mu_{k+1}$. Then, the associativity condition for $\tilde \mu$ is automatically satisfied in degree at most $k$, while in degree $k+1$ takes the form
	\begin{eqnarray*}
		\lefteqn{\sum_{t=1}^k \mu_t(\mu_{k+1-t}\otimes \mathbb 1) + \mu_0(\mu_{k+1}\otimes \mathbb 1) + \mu_{k+1}(\mu_0\otimes \mathbb 1)}\\
		&=&  \sum_{t=1}^k \mu_t(\mathbb 1\otimes \mu_{k+1-t}) + \mu_0(\mathbb 1\otimes \mu_{k+1})  + \mu_{k+1}(\mathbb 1\otimes \mu_0), 
	\end{eqnarray*}
	which can be rewritten as 
	$$
	\delta_H^2(\mu_{k+1}) + \Theta_{k+1} =0,
	$$
	where $\Theta_{k+1}$ contains terms not involving $\mu_{k+1}$. Similarly as above, it holds that $\delta^3_H(\Theta_{k+1}) = 0$, and the obstruction to extend a degree $k$ deformation to a degree $k+1$ deformation lies in the third cohomology group \cite{Gerst}.
	
	In this case we do not provide a diagrammatic proof of these classic results, as the analogue approach for braided algebras becomes increasingly complicated to handle. However, a different approach following the calculus of deviations of Markl-Stasheff, as described in Section~\ref{sec:quadratic_deform}, can be employed to extend the analogue of this result to the case of higher degree deformations for YBH cohomology as in the case of associative algebras. While the direct approach is not feasible at higher degrees, we still believe that it is valuable to provide a direct proof of these results of YBH cohomology at least at degree $2$. We will therefore due so, before providing the general construction. 

\subsection{Higher order deformations for Yang-Baxter Hochschild homology}\label{sec:quadratic_deform}

We  investigate the obstructions to higher deformations for braided  algebras, and obtain integrability criteria at least in some special cases of interest. 
This requires additional obstructions for the compatibility among higher degree deformation terms
lying in the third YBH cohomology groups.
First, we pose the following inductive definition, where Defintion~\ref{def:inf_deform} serves as the base case $n=1$.

\begin{definition}\label{def:higher_deform}
	{\rm 
	Let $(V,\mu, R)$ be a braided algebra, and let $(\tilde V, \tilde \mu, \tilde R)$ be a deformation of degree $n$. Then, we say that $(\hat V, \hat \mu, \hat R, )$ is a deformation of $(V,\mu, R)$ of order $n+1$ if it is a braided algebra over the ring $\mathbb k[[\hbar]]/(\hbar^{n+2})$ and, in addition, $\hat R \cong_{(\hbar^{n+1})} \tilde R$ and $\hat \mu \cong_{(\hbar^{n+1})} \tilde \mu$. 
}
\end{definition}

We will use the notation $R_n = \sum_{i=0}^n \hbar^i \phi_i$ and $\mu_n = \sum_{i=0}^n\hbar^i \psi_i$, where $\phi_0 := R$ and $\psi_0 := \mu$. For all $r\geq 2$ we also set the triples of integers
$$
\Gamma_r := \{(i,j,k)\ |\ i+j+k = r,\  i, j, k \geq 0, \ i,j,k\neq r \}. 
$$
Then, we define 
\begin{eqnarray*}
\Xi^{\rm YI}_r := \sum_{(i,j,k)\in \Gamma_r} (\psi_i\otimes \mathbb 1)(\mathbb 1\otimes \phi_j)(\phi_k\otimes 1), 
& &  \Omega^{\rm YI}_r := \sum_{(p,q,0)\in \Gamma_r} \phi_p(\mathbb 1\otimes \psi_q) , \\
  \Xi^{\rm IY}_r := \sum_{(i,j,k)\in \Gamma_r} (\mathbb 1\otimes \psi_i)(\phi_j\otimes \mathbb 1)(\mathbb 1\otimes \phi_k), & &  \Omega^{\rm IY}_r := \sum_{(p,q,0)\in \Gamma_r} \phi_p(\psi_q\otimes \mathbb 1).
  \end{eqnarray*}
  Lastly, we define 
  $$\Theta_r = \sum_{(i,j,k)\in \Gamma_r} (\phi_i\otimes \mathbb 1)(\mathbb 1\otimes \phi_j)(\phi_k\otimes \mathbb 1) - (\mathbb 1\otimes \phi_i)(\phi_j\otimes \mathbb 1)(\mathbb 1 \otimes \phi_k)$$ and $\Lambda_r = \sum_{(p,q,0)\in \Gamma_r} \mu_p(\mu_q\otimes \mathbb 1) - \mu_p(\mathbb 1\otimes \mu_q)$. 

\begin{theorem}\label{lem:higher}
	Let $(V,\mu, R)$ be a braided algebra, and let $(\tilde V, \mu_n, R_n)$ be a deformation of degree $n$. Then $(V,  \mu_{n+1}, R_{n+1})$ is a deformation of $(V,\mu, R)$ of order $n+1$ if and only if 
	\begin{center}
		$
	\begin{cases}
		\delta^2_{\rm YB}\phi_{n+1} + \Theta_{n+1} = 0\\
		\delta^2_{\rm YI}(\phi_{n+1},\psi_{n+1}) + \Xi^{\rm YI}_{n+1} - \Omega^{\rm YI}_{n+1} = 0\\
		\delta^2_{\rm IY}(\phi_{n+1},\psi_{n+1}) + \Xi^{\rm IY}_{n+1} - \Omega^{\rm IY}_{n+1} = 0\\
		\delta^2_{\rm H}\psi_{n+1} + \Lambda_{n+1} =0.
	\end{cases}
	$
	\end{center}
\end{theorem}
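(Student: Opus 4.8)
The plan is to substitute $\hat\mu = \mu_{n+1}$ and $\hat R = R_{n+1}$ into each of the four defining axioms of a braided algebra --- associativity of $\hat\mu$, the Yang-Baxter equation for $\hat R$, and the two mixed conditions \eqref{eqn:YI} and \eqref{eqn:IY} --- expand in powers of $\hbar$, and read off the coefficient of $\hbar^{n+1}$. By Definition~\ref{def:higher_deform}, $(V,\mu_{n+1},R_{n+1})$ is an order $n+1$ deformation exactly when all four axioms hold in $\mathbb k[[\hbar]]/(\hbar^{n+2})$. Since $(\tilde V,\mu_n,R_n)$ is already a deformation of degree $n$, each axiom holds modulo $(\hbar^{n+1})$; moreover $\mu_{n+1}\equiv\mu_n$ and $R_{n+1}\equiv R_n$ modulo $(\hbar^{n+1})$, so passing to the longer truncations leaves every coefficient of degree at most $n$ unchanged. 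Hence the only new conditions are the vanishing of the four degree $n+1$ coefficients, and it suffices to match each such coefficient with the corresponding line of the system.

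First I would dispose of the two classical axioms. Writing $\mu_{n+1}=\sum_{i=0}^{n+1}\hbar^i\psi_i$ and inserting into $\hat\mu(\hat\mu\otimes\mathbb 1)-\hat\mu(\mathbb 1\otimes\hat\mu)$, the degree $n+1$ coefficient is $\sum_{p+q=n+1}[\psi_p(\psi_q\otimes\mathbb 1)-\psi_p(\mathbb 1\otimes\psi_q)]$. The summands in which one index equals $n+1$ force the other to be $0$, so the remaining factor is $\mu=\psi_0$, and these assemble by definition into $\delta^2_{\rm H}\psi_{n+1}$; the summands with both indices at most $n$ are precisely those indexed by $\Gamma_{n+1}$ and form $\Lambda_{n+1}$, giving the last equation of the system. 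Running the identical separation on the Yang-Baxter equation $(\hat R\otimes\mathbb 1)(\mathbb 1\otimes\hat R)(\hat R\otimes\mathbb 1)=(\mathbb 1\otimes\hat R)(\hat R\otimes\mathbb 1)(\mathbb 1\otimes\hat R)$ isolates the six terms carrying a single $\phi_{n+1}$ flanked by two copies of $R=\phi_0$; these reproduce $\delta^2_{\rm YB}\phi_{n+1}$, while the sum over $\Gamma_{n+1}$ is $\Theta_{n+1}$, yielding the first equation.

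Next I would treat the two mixed axioms the same way. Substituting $\mu_{n+1}$ and $R_{n+1}$ into \eqref{eqn:YI} and extracting the coefficient of $\hbar^{n+1}$, the left side contributes $\sum_{i+j+k=n+1}(\psi_i\otimes\mathbb 1)(\mathbb 1\otimes\phi_j)(\phi_k\otimes\mathbb 1)$ and the right side $\sum_{p+q=n+1}\phi_p(\mathbb 1\otimes\psi_q)$. Once again the terms bearing a top coefficient $\psi_{n+1}$ or $\phi_{n+1}$ (all other factors then being the undeformed $\mu$ or $R$) collect, upon termwise comparison with the definition, into the relevant mixed differential applied to $\phi_{n+1}\oplus\psi_{n+1}$, while the terms indexed by $\Gamma_{n+1}$ are exactly $\Xi^{\rm YI}_{n+1}-\Omega^{\rm YI}_{n+1}$; this is the corresponding mixed equation. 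The axiom \eqref{eqn:IY} is handled verbatim and supplies the remaining mixed equation. Reversing each expansion shows conversely that the four equations force the four axioms at degree $n+1$, so the equivalence holds in both directions.

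The computation above is the essential content, and the one point needing care is the separation of each degree $n+1$ coefficient into a ``differential part'' and a ``$\Gamma_{n+1}$ remainder''. This split is forced rather than ad hoc: since the indices are nonnegative and sum to $n+1$, a summand contains a top-degree factor ($\phi_{n+1}$ or $\psi_{n+1}$) if and only if each of its other factors has degree zero, i.e. equals the undeformed $\mu$ or $R$ --- which is precisely the dichotomy encoded by the condition $i,j,k\neq r$ in the definition of $\Gamma_r$. I expect the main labor, and the only genuine risk of error, to be keeping the \eqref{eqn:YI}/\eqref{eqn:IY} labels and the six signs of each differential straight while verifying that the differential parts coincide termwise with $\delta^2_{\rm YB}$, $\delta^2_{\rm YI}$, $\delta^2_{\rm IY}$, and $\delta^2_{\rm H}$; the diagrammatic calculus of the previous sections furnishes an independent check of these matchings.
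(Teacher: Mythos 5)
Your proposal is correct and follows essentially the same route as the paper: substitute $\mu_{n+1}=\sum_i\hbar^i\psi_i$ and $R_{n+1}=\sum_i\hbar^i\phi_i$ into the four braided-algebra axioms, use the degree-$n$ hypothesis to reduce everything to the coefficient of $\hbar^{n+1}$, and split that coefficient into the terms carrying a top-degree factor (which assemble into the appropriate second differential) and the $\Gamma_{n+1}$-indexed remainder ($\Theta_{n+1}$, $\Xi_{n+1}-\Omega_{n+1}$, $\Lambda_{n+1}$). The paper carries this out explicitly only for one mixed equation and declares the rest analogous, so your uniform treatment of all four cases, including the observation that the split is forced by the index constraint defining $\Gamma_r$, matches and slightly elaborates its argument.
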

\begin{proof}
	The proof consists of some straightforward, albeit long, computations. We show some details regarding the third equation of the system. We set $R_{n+1} = \sum_{i=0}^{n+1}\hbar^i \phi_i$ and $\mu_{n+1} = \sum_{i=0}^{n+1}\hbar^i \mu_i$, as above. Since by hypothesis $(R_n,\mu_n)$ defines a braided algebra deformation of $(V,\mu, R)$, Equation~\eqref{eqn:IY} on $R_{n+1}$ and $\mu_{n+1}$ gives an obstruction only on terms of degree $n+1$ in $\hbar$. Then, writing only the terms of degree $n+1$ in $\hbar$ we have 
	\begin{eqnarray*}
	0 &=& (\psi_{n+1}\otimes \mathbb 1)(\mathbb 1\otimes \phi_0)(\phi_0\otimes \mathbb 1)
	+ (\psi_0\otimes \mathbb 1)(\mathbb 1\otimes \phi_{n+1})(\phi_0\otimes \mathbb 1) +(\psi_0\otimes \mathbb 1)(\mathbb 1\otimes \phi_0)(\phi_{n+1}\otimes \mathbb 1)\\
	&& + \sum_{(i,j,k)\in\Gamma_{n+1}} (\psi_i \otimes \mathbb 1)(\mathbb 1\otimes \phi_j)(\phi_k\otimes \mathbb 1) 
	- \sum_{(p,q,0)\in\Gamma_{n+1}}\phi_p(\mathbb 1\otimes \psi_q)\\
	&=& \delta^2_{\rm IY}(\phi,\psi) 
	+ \Xi^{\rm IY}_{n+1} 
	- \Omega^{\rm IY}_{n+1}, 
	\end{eqnarray*}
which is the third equation. Therefore,  $(V,  \mu_{n+1}, R_{n+1})$ satisfies Equation~\ref{eqn:IY} if and only if 
	$$\delta^2_{\rm IY}(\phi_{n+1},\psi_{n+1}) + \Xi^{\rm IY}_{n+1} - \Omega^{\rm IY}_{n+1} = 0$$
	 is satisfied. The other cases are treated analogously by direct inspection. 
\end{proof}

\begin{lemma} \label{lem:3cocy}
The obstruction to extending an infinitesimal deformation of braided algebras to a quadratic deformation lies in the third YBH cohomology group. More concisely, we have 
$$\delta_{\rm YBH}^3 (\Theta_2 ) =
\delta_{\rm YBH}^3 (\Xi^{\rm YI}_2 - \Omega^{\rm YI}_2 )=
		\delta_{\rm YBH}^3 ( \Xi^{\rm IY}_2 - \Omega^{\rm IY}_2 ) =
		\delta_{\rm YBH}^3 ( \Lambda_2 )=0. $$
\end{lemma}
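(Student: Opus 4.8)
The plan is to show that each of the four quantities $\Theta_2$, $\Xi^{\rm YI}_2-\Omega^{\rm YI}_2$, $\Xi^{\rm IY}_2-\Omega^{\rm IY}_2$, and $\Lambda_2$ is a cocycle under the appropriate component of $\delta^3_{\rm YBH}$, following the same pattern as the classical Gerstenhaber result recalled in Lemma~\ref{lem:H3cocy}. The key conceptual point is that these obstruction terms are, up to sign, exactly the degree-$2$ coefficients in $\hbar$ of the braided-algebra axioms (YBE, YI, IY, and associativity) applied to the formal deformations $R_2 = R + \hbar\phi_1 + \hbar^2\phi_2$ and $\mu_2 = \mu + \hbar\psi_1 + \hbar^2\psi_2$. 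Thus the statement $\delta^3_{\rm YBH}(\Theta_2) = 0$ and its analogues are a formal consequence of the fact that the \emph{full} expressions (obstruction plus the $\delta^2$-terms) must vanish identically once one expands a genuine axiom, so that the obstruction alone satisfies the linearized cocycle identity.

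Concretely, I would proceed component by component. For $\Theta_2$, I would invoke Lemma~\ref{lem:H3cocy} directly, since $\Theta_2(\phi_1) = (\phi_1\otimes\mathbb1)(\mathbb1\otimes\phi_0)(\phi_0\otimes\mathbb1)+\cdots$ has exactly the same shape as the Hochschild obstruction but with $R$-diagrammatics replacing $\mu$; the diagrammatic proof deferred to Appendix~\ref{AppendixA} carries over verbatim to the YB setting, with crossings in place of trivalent vertices. Likewise $\delta^3_{\rm H}(\Lambda_2)=0$ is literally the content of Lemma~\ref{lem:H3cocy}. The genuinely new computations are the two mixed terms $\Xi^{\rm YI}_2-\Omega^{\rm YI}_2$ and $\Xi^{\rm IY}_2-\Omega^{\rm IY}_2$, for which I would apply $\delta^{3,1}_{\rm YI}$ (respectively $\delta^{3,3}_{\rm YI}$ and the IY analogues) and check cancellation. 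The cleanest way to organize this is the substitution-and-cancellation method already used in the proof of Proposition~\ref{pro:cochain} and in the proof that $\delta^3_{\rm YI}$ is a differential: substitute the explicit sums defining $\Xi$ and $\Omega$ into the differential, and observe that each resulting term pairs with an oppositely-signed term, either directly or after a single application of Equation~\eqref{eqn:YI}, Equation~\eqref{eqn:IY}, or the YBE.

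The main obstacle I anticipate is bookkeeping in the mixed components: the YI and IY obstructions mingle the deformation terms $\phi_i$ of the YB operator with the terms $\psi_j$ of the multiplication, so the naive expansion of $\delta^{3,\bullet}_{\rm YI}$ applied to these sums produces many terms, and the cancellations are no longer between structurally identical diagrams but require using the defining braided-algebra relations to move a trivalent vertex past a crossing. I would therefore lean heavily on the diagrammatic calculus, representing $\Xi^{\rm YI}_2$ and $\Omega^{\rm YI}_2$ as superpositions of the diagrams in Figures~\ref{YII}--\ref{IYY} with circled vertices, and track cancellations visually as was done in Figure~\ref{YYIdiff2}. The most delicate check is ensuring that the terms involving a composite such as $(\phi_1\otimes\mathbb1)(\mathbb1\otimes\phi_1)$ (the genuinely quadratic-in-$\phi_1$ pieces) cancel correctly, since these are the only ones not controlled by a single coherence relation and instead require the full interplay of the YI/IY conditions with the YBE.

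An alternative and more conceptual route, which I would mention as a sanity check, is to prove all four identities simultaneously by the following formal argument. Consider the deformation ring $\mathbb k[[\hbar]]/(\hbar^3)$ and the universal situation in which $\phi_2,\psi_2$ are treated as formal indeterminates; Theorem~\ref{lem:higher} expresses the obstruction to a degree-$2$ deformation as $\delta^2_{\rm YBH}(\phi_2\oplus\psi_2) + (\Theta_2, \Xi^{\rm YI}_2-\Omega^{\rm YI}_2, \Xi^{\rm IY}_2-\Omega^{\rm IY}_2, \Lambda_2) = 0$. Since $\delta^3_{\rm YBH}\circ\delta^2_{\rm YBH}=0$ by the Proposition preceding this subsection, applying $\delta^3_{\rm YBH}$ to the full degree-$2$ axiom expansion and using that the axioms hold identically forces $\delta^3_{\rm YBH}$ of the obstruction tuple to vanish, which is precisely the claim. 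This argument is essentially the content of Gerstenhaber's original observation, and reduces the lemma to the already-established fact that $(\delta^3_{\rm YI})(\delta^2_{\rm YBH}) = 0$; I would use the explicit diagrammatic verification as the concrete backbone and cite this formal reasoning to explain \emph{why} the cancellations must occur.
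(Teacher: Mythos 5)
Your main line of attack---substituting the obstruction terms into the explicit third differentials and tracking cancellations diagrammatically, using the 2-cocycle conditions on $(\phi_1,\psi_1)$ together with the braided-algebra axioms for $(\mu,R)$---is exactly what the paper does in Appendix~\ref{AppendixB} (Figures~\ref{IIYdiff0} through \ref{IIY3ex}), and it mirrors the Appendix~\ref{AppendixA} treatment of Lemma~\ref{lem:H3cocy} just as you predict. Your division of labor also matches the paper's: $\Lambda_2$ is literally the Hochschild case, $\delta^3_{\rm YB}(\Theta_2)=0$ is the known Yang-Baxter analogue, and the genuinely new bookkeeping is in the mixed YI/IY components, which the paper likewise resolves by substitute-and-cancel on diagrams (the paper proves one mixed case in detail and notes the rest are ``similarly checked''). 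One caveat: the Appendix~\ref{AppendixA} argument does not carry over literally ``verbatim'' to $\Theta_2$, since the pentagon of parenthesizations is replaced by a cycle of Yang-Baxter moves, but the method transfers as you say.

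The ``alternative conceptual route'' you offer as a sanity check, however, is circular, and you should not cite it even as an explanation of why the cancellations occur. The equation from Theorem~\ref{lem:higher},
\begin{equation*}
\delta^2_{\rm YBH}(\phi_2\oplus\psi_2) + \bigl(\Theta_2,\ \Xi^{\rm YI}_2-\Omega^{\rm YI}_2,\ \Xi^{\rm IY}_2-\Omega^{\rm IY}_2,\ \Lambda_2\bigr) = 0,
\end{equation*}
is not an identity: it is the \emph{condition} for a quadratic extension to exist, and it holds for some choice of $(\phi_2,\psi_2)$ precisely when the obstruction tuple is a coboundary. Treating $\phi_2,\psi_2$ as formal indeterminates does not make the degree-2 coefficients of the axioms ``hold identically''---for arbitrary $(\phi_2,\psi_2)$ that coefficient \emph{is} the left-hand side above, which need not vanish. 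If you assume it vanishes, you have assumed the quadratic deformation exists, i.e.\ assumed the obstruction cobounds, in which case $\delta^3_{\rm YBH}$ of the obstruction vanishes trivially but the conclusion is empty: the entire content of Lemma~\ref{lem:3cocy} is that the obstruction is a cocycle \emph{whether or not} it cobounds, which is what makes $H^3_{\rm YBH}(V,V)$ an obstruction group (Corollary~\ref{cor:quadratic}). The cocycle property must therefore be derived, as Gerstenhaber did and as the paper does diagrammatically, from only the degree-zero axioms and the 2-cocycle condition on $(\phi_1,\psi_1)$; your diagrammatic computation is the actual proof, and the formal argument provides no independent justification for it.
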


\begin{proof}
The proof is given in Appendix~\ref{AppendixB}.
\end{proof}

The following result follows immediately from Lemma~\ref{lem:3cocy}.

\begin{corollary}\label{cor:quadratic}
	If $H_{\rm YBH}^3(V,V) = 0$, then any infinitesimal deformation can be extended to a quadratic deformation.
\end{corollary}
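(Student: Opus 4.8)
The plan is to deduce Corollary~\ref{cor:quadratic} directly from the combination of Theorem~\ref{lem:higher} (in the case $n=1$) and Lemma~\ref{lem:3cocy}, treating it as a cohomological vanishing criterion. First I would fix an arbitrary infinitesimal deformation of the braided algebra $(V,\mu,R)$, given by a YBH $2$-cocycle $(\phi_1,\psi_1)$, so that $\tilde\mu = \mu + \hbar\psi_1$ and $\tilde R = R + \hbar\phi_1$ satisfy $\delta^2_{\rm YBH}(\phi_1\oplus\psi_1)=0$. The goal is to produce degree-$2$ terms $\phi_2$ and $\psi_2$ so that $\mu_2 = \mu + \hbar\psi_1 + \hbar^2\psi_2$ and $R_2 = R + \hbar\phi_1 + \hbar^2\phi_2$ form a quadratic (order $2$) deformation. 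By Theorem~\ref{lem:higher} with $n=1$, this amounts to solving the four equations
\begin{eqnarray*}
\delta^2_{\rm YB}\phi_2 + \Theta_2 &=& 0,\\
\delta^2_{\rm YI}(\phi_2,\psi_2) + \Xi^{\rm YI}_2 - \Omega^{\rm YI}_2 &=& 0,\\
\delta^2_{\rm IY}(\phi_2,\psi_2) + \Xi^{\rm IY}_2 - \Omega^{\rm IY}_2 &=& 0,\\
\delta^2_{\rm H}\psi_2 + \Lambda_2 &=& 0,
\end{eqnarray*}
where $\Theta_2$, $\Xi^{\rm YI}_2$, $\Omega^{\rm YI}_2$, $\Xi^{\rm IY}_2$, $\Omega^{\rm IY}_2$, and $\Lambda_2$ are the explicit obstruction terms built from $\phi_1$ and $\psi_1$ alone.

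Next I would package these obstruction terms into a single element of $C^3_{\rm YBH}(V,V)$, namely the tuple $\mathcal{O} := (\Theta_2,\ \Xi^{\rm YI}_2 - \Omega^{\rm YI}_2,\ \Xi^{\rm IY}_2 - \Omega^{\rm IY}_2,\ \Lambda_2)$ sitting in the four direct summands $C^{3,3}_{\rm YBH}\oplus C^{3,2}_{\rm YI}\oplus C^{3,2}_{\rm IY}\oplus C^{3,1}_{\rm YBH}$. The four displayed equations say precisely that the pair $(\phi_2,\psi_2)$, viewed through the second differential, satisfies $\delta^2_{\rm YBH}(\phi_2\oplus\psi_2) = -\mathcal{O}$. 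Hence the system is solvable if and only if $\mathcal{O}$ lies in the image of $\delta^2_{\rm YBH}$, i.e.\ $\mathcal{O}$ is a YBH $3$-coboundary. By Lemma~\ref{lem:3cocy}, each component of $\mathcal{O}$ is a YBH $3$-cocycle, so $\delta^3_{\rm YBH}\mathcal{O} = 0$ and $\mathcal{O}$ determines a well-defined class $[\mathcal{O}] \in H^3_{\rm YBH}(V,V)$.

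Finally I would invoke the hypothesis $H^3_{\rm YBH}(V,V)=0$: since $\mathcal{O}$ is a cocycle and the third cohomology group vanishes, $[\mathcal{O}]=0$, so $\mathcal{O} = \delta^2_{\rm YBH}(\phi_2\oplus\psi_2)$ for some $(\phi_2,\psi_2)\in C^2_{\rm YBH}(V,V)$ (absorbing the sign into the choice of the preimage). With this choice, all four equations of Theorem~\ref{lem:higher} hold, and therefore $(V,\mu_2,R_2)$ is a quadratic deformation extending the given infinitesimal one. The main subtlety to watch is bookkeeping rather than any deep obstacle: one must verify that the four scalar equations of Theorem~\ref{lem:higher} really do assemble into the single coboundary equation $\delta^2_{\rm YBH}(\phi_2\oplus\psi_2) = -\mathcal{O}$ under the direct-sum decomposition of $\delta^2_{\rm YBH}$, matching each obstruction term $\Theta_2$, $\Xi^{\rm YI}_2-\Omega^{\rm YI}_2$, $\Xi^{\rm IY}_2-\Omega^{\rm IY}_2$, $\Lambda_2$ with its correct target summand. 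Granting the precise identification of summands and the cocycle statement of Lemma~\ref{lem:3cocy}, the corollary is then immediate, and in fact essentially a formal consequence requiring no further computation.
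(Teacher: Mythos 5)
Your proposal is correct and follows essentially the same route as the paper: apply Theorem~\ref{lem:higher} with $n=1$, use Lemma~\ref{lem:3cocy} to see that the obstruction tuple is a YBH $3$-cocycle, and invoke $H^3_{\rm YBH}(V,V)=0$ to write it as a coboundary $\delta^2_{\rm YBH}(\phi_2\oplus\psi_2)$, producing the quadratic extension. Your write-up is in fact somewhat more explicit than the paper's about assembling the four equations into the single coboundary equation under the direct-sum decomposition, but the underlying argument is identical.
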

\begin{proof}
		By Lemma~\ref{lem:3cocy}, 
		$\Theta_2$,
		$\Xi^{\rm YI}_2 - \Omega^{\rm YI}_2$, 
		$ \Xi^{\rm IY}_2 - \Omega^{\rm  IY}_2$,
		and $\Lambda_2$ are 3-cocycles, representing elements of 
		$H^3_{\rm YBH}(V,V)$.
		If  the equalities  in Theorem~\ref{lem:higher} hold, then these 3-cocycles are coboundaries. 
		In particular, if $H_{\rm YBH}^3(V,V)=0$, then $(V, \mu, R)$  deforms to $(\hat V, \hat \mu, \hat R)$.
		Thus, if $H^3_{\rm YBH}(V,V) = 0$ there is no obstruction to extending an infinitesimal deformation to a quadratic deformation. 
\end{proof}

Lemma~\ref{lem:3cocy} is generalized to the extension of any degree $n$ deformation to a degree $n+1$ deformation by considering Markl-Stasheff calculus of deviations. 

\begin{theorem}\label{thm:higher_YBH_deg_n}
	Let $\bar \psi = \sum_{i=0}^n \hbar^i \psi_i$ and $\bar \phi =  \sum_{i=0}^n \hbar^i \phi_i$ be a degree $n$ deformation of $\mu = \psi_0$ and $R = \phi_0$. Let $\psi_{n+1}$, $\phi_{n+1}$, $\Theta_{n+1}$, $\Xi^{\bullet}_{n+1}$, $\Omega^{\bullet}_{n+1}$, $\Lambda^{\bullet}_{n+1}$ be as in Lemma~\ref{lem:higher}. Then, we have 
	\begin{eqnarray*}
		\delta^3_{\rm YBH}(\Theta_{n+1}\oplus(\Xi^{\rm IY}_{n+1}-\Omega^{\rm IY}_{n+1})\oplus(\Xi^{\rm YI}_{n+1}-\Omega^{\rm YI}_{n+1})\oplus\Lambda_{n+1})  = 0.
	\end{eqnarray*}
	In other words, the obstruction to extending a degree $n$ braided algebra deformation to a degree $n+1$ deformation lies in the third YBH cohomology group. 
\end{theorem}

The proof of this result is based on the use of the diagrams in Figure~\ref{YII} and Figure~\ref{IYY}, following the same approach in \cite{SZ-BC}. In fact, the figures can be used to compute the deviations for higher degrees to hold, from which we can infer that the obstruction to extendind a deformation to a higher degree holds up to the third differentials. Details can be found in \cite{SZ-BC}. 

Then, Corollary~\ref{cor:quadratic} is immediately further generalized as follows. 

\begin{corollary}
			If $H^3_{\rm YBH}(V,V) = 0$, then any  infinitesimal deformation of $V$ can be extended to a full deformation of arbitrary degree. 
\end{corollary}
		
We include the proof of Lemma~\ref{lem:3cocy} in Appendix~\ref{AppendixB} below 
since
this proof is more diagrammatic and different from methods used in \cite{SZ-BC}. Moreover, the graphs used in the present proof were used
in \cite{SZ-BC} in a different manner.


\appendix

\section{Proof of Lemma~\ref{lem:H3cocy}}\label{AppendixA}

This is a well known result in the literature, and the innovative component of this proof is a  purely diagrammatic approach, which is employed extensively in the proofs in Appendix~\ref{AppendixB}.

In Figure~\ref{hoch3}, the pentagonal diagram is depicted  that represents 
the Hochschild 3-cocycle condition. The leftmost tree represents 
the parenthesis structure $((xy)z)w$ for algebra monomials $x,y,z,w$, and the vertices of the pentagon list all possibilities of parentheses structures. 
Applying associativity corresponds to each directed edge, which represents 
a 3-cocycle $\alpha$ as depicted. The arguments involved in the associativity,
$x,y,z$, are substituted in the 3-cocycle as depicted.
The figure indicates that the Hochschild 3-cocycle condition corresponds to this figure.

\begin{figure}[htb]
	\begin{center}
		\begin{overpic}[width=2.5in]{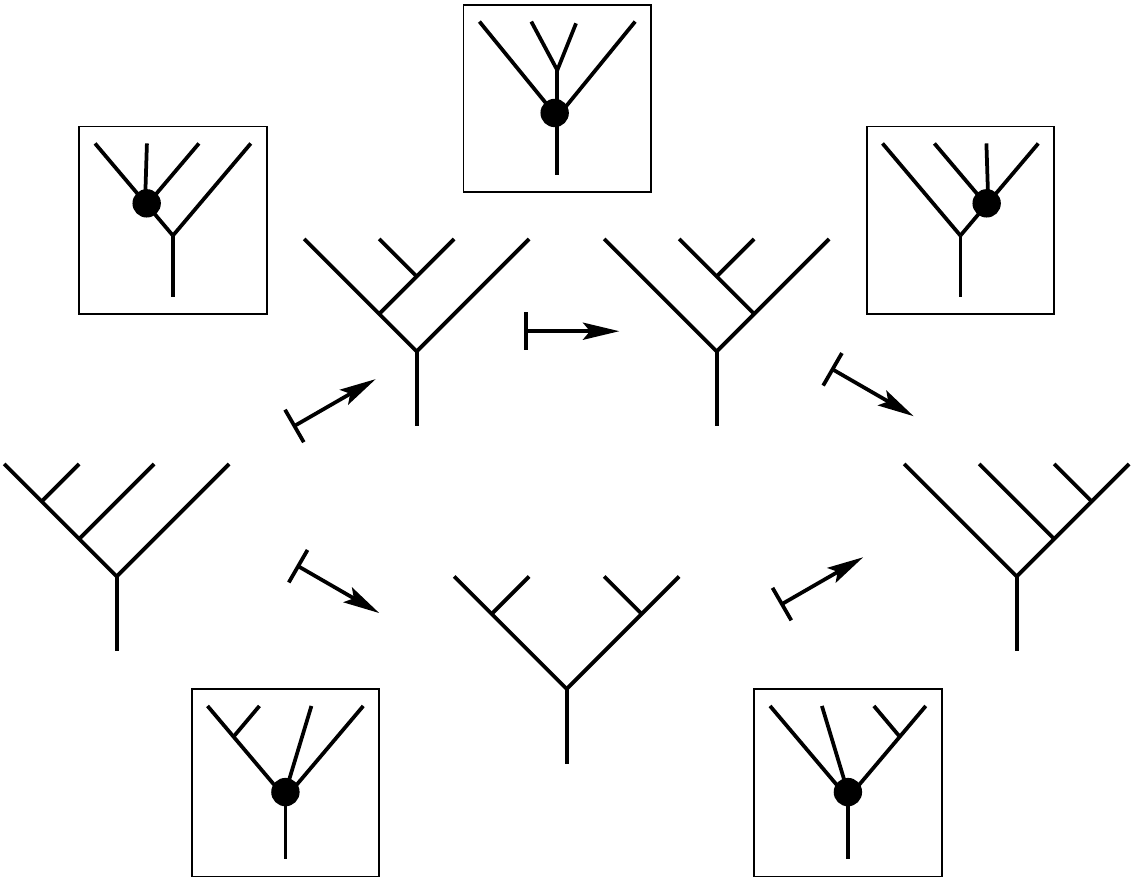}
			\put(-1,39){$x \ \, y \  \, z \  \, w $}
			\put(-5,69){$\alpha (x \otimes y \otimes z) w$}
			\put(31,80){$\alpha (x \otimes yz \otimes w) $}
			\put(66,69){$x \alpha (y \otimes z \otimes w) $}
			\put(7,-4.5){$\alpha (xy \otimes z \otimes w) $}
			\put(58,-4.5){$\alpha (x \otimes y \otimes zw) $}
		\end{overpic}
	\end{center}
	\caption{}
	\label{hoch3}
\end{figure}

\begin{figure}[htb]
\begin{center}
\includegraphics[width=3in]{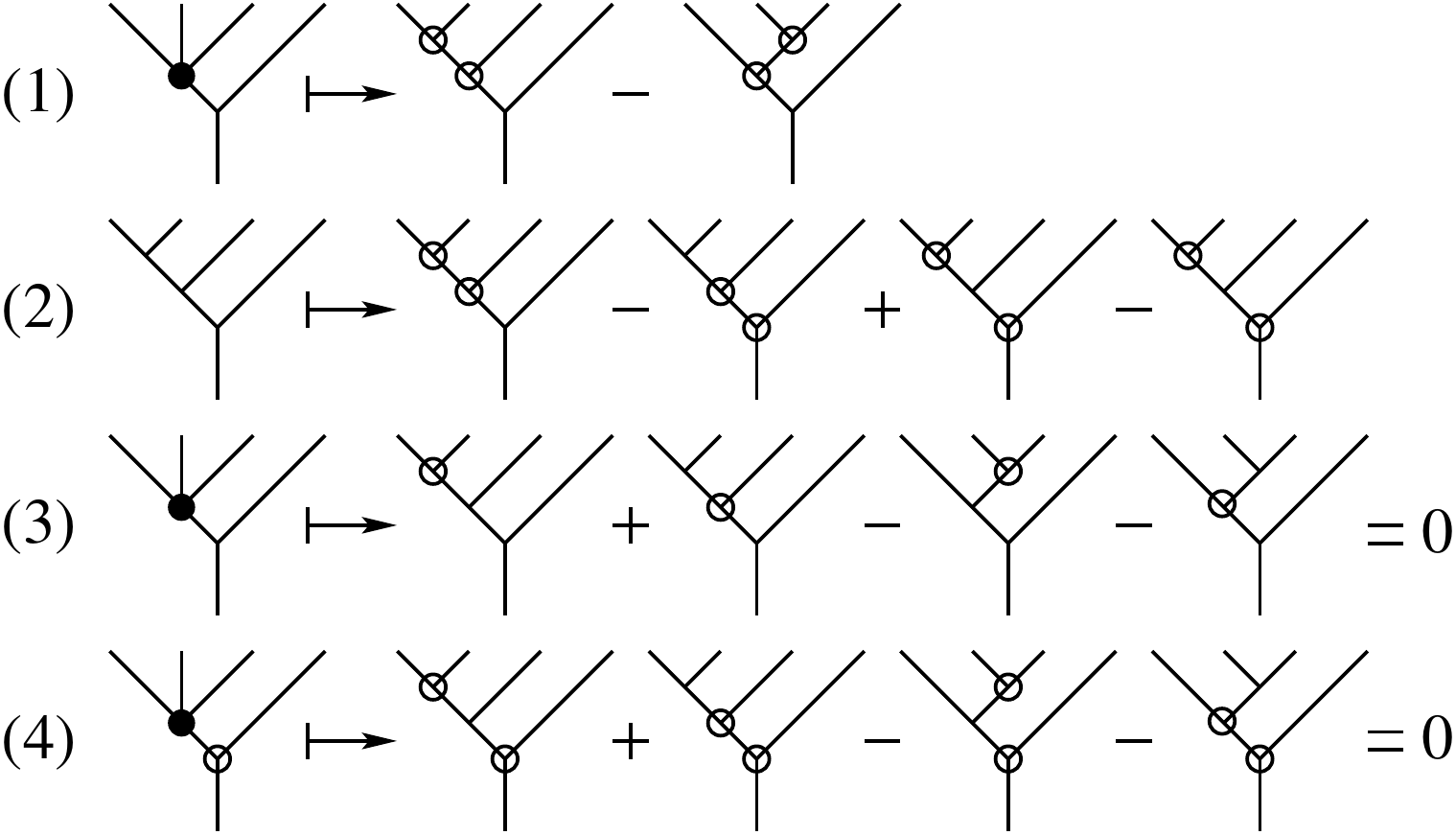}
\end{center}
\caption{}
\label{gers1}
\end{figure}

\begin{figure}[htb]
\begin{center}
\includegraphics[width=4in]{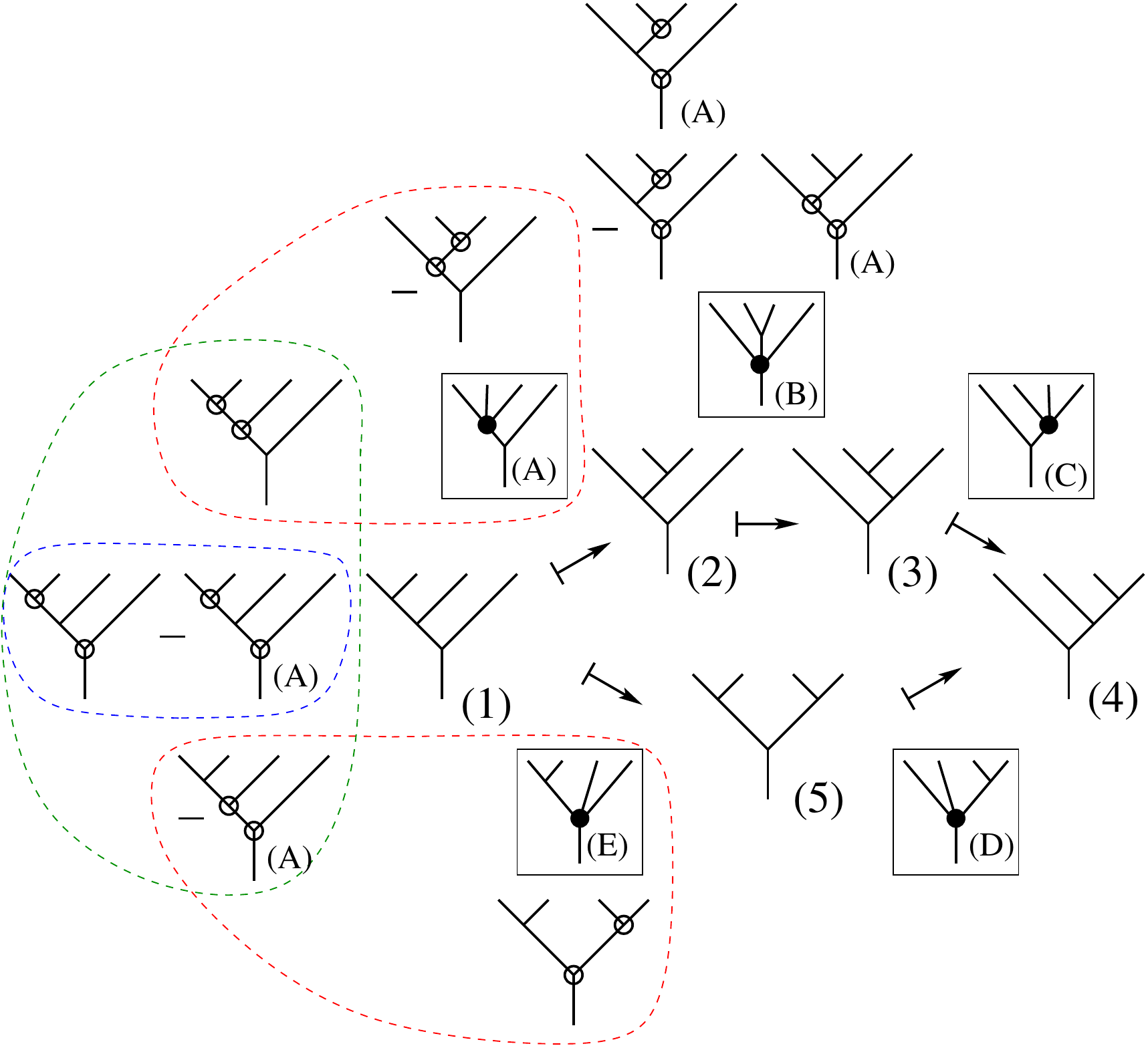}
\end{center}
\caption{}
\label{gers2}
\end{figure}

\begin{figure}[htb]
\begin{center}
\includegraphics[width=3in]{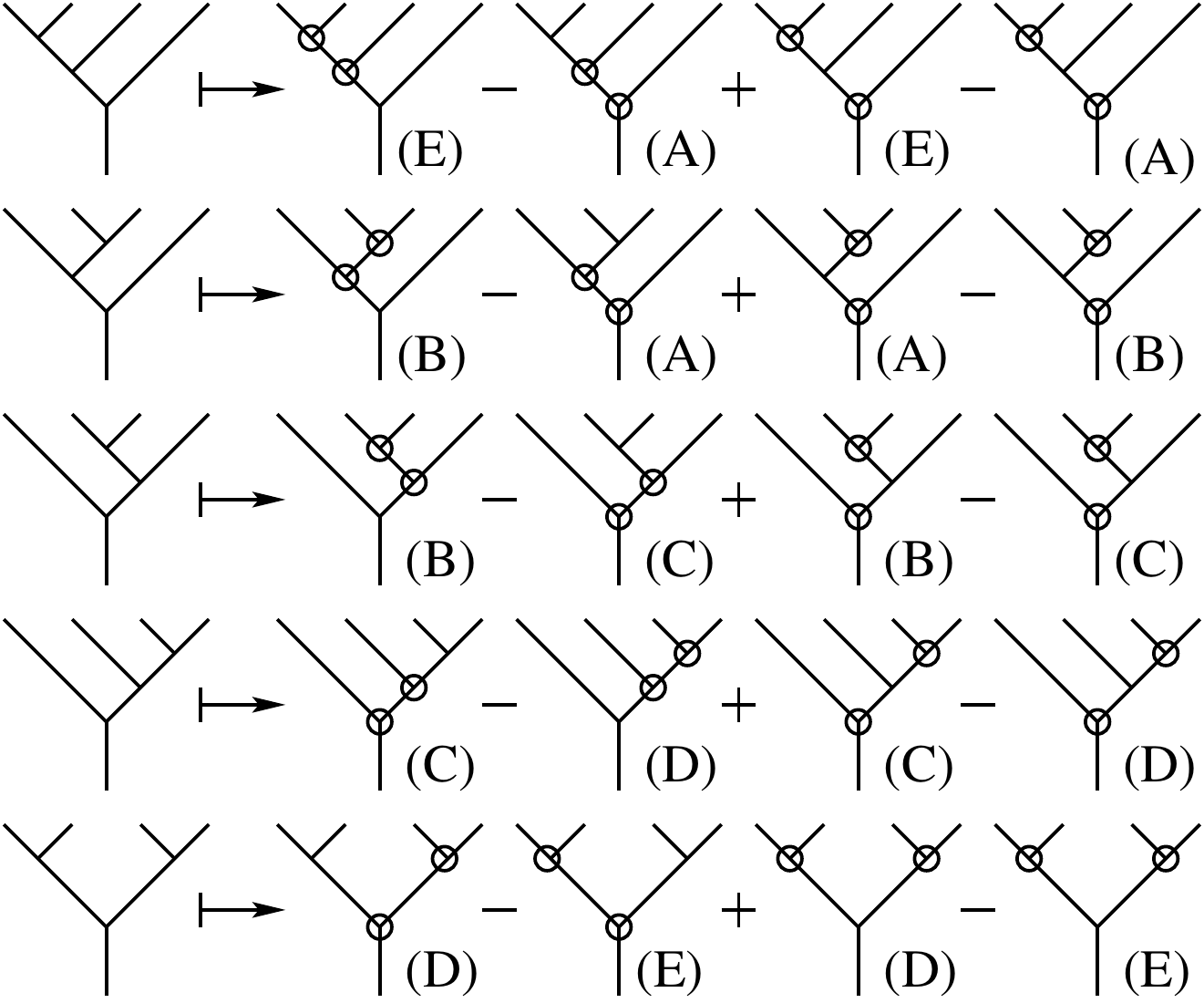}
\end{center}
\caption{}
\label{gers3}
\end{figure}

In Figure~\ref{gers1} (1), the substitution of the 
term $\psi_1(\psi_1 \otimes {\mathbb 1} ) - \psi_1( {\mathbb 1} \otimes  \psi_1)$
into a 3-cocycle, 
$\alpha (\psi_1) = \psi_1(\psi_1 \otimes {\mathbb 1} ) - \psi_1( {\mathbb 1} \otimes  \psi_1)$,
is depicted, where a white circle represents $\psi_1$. 
At each tree diagram of the pentagon, we place all possible pairs of circles to trivalent vertices,
as depicted in (2), and make a formal linear combination with $\pm$ signs.
In (2), the first two terms appear as the first term of (1), the second term appears in the $3$-cocycle labeled (E) in Figure~\ref{gers2}, and the last two canceling pairs do not appear in 3-cocycle terms. 
All terms cancel as follows.

In Figure~\ref{gers2}, the circled tree diagrams are depicted at the left most tree diagram.
The red dotted circle at the 3-cocycle diagram represents the group of two diagrams in Figure~\ref{gers1} (1). One of them is a circled diagram of the tree (1) in Figure~\ref{gers2}, and the other term corresponds to the tree (2). The blue dotted circle groups a canceling pair.
The green dotted circle represents all circled tree diagrams of the tree (1), as depicted in Figure~\ref{gers1} (2). The bottom red circle represents the substitution of 
the 3-cocycle condition depicted in (E). 

In Figure~\ref{gers3}, all circled diagrams from all tree diagrams are depicted.
The formal sum of all these terms corresponds to the differential of $\Theta_2(\psi_1)$,
and we show that this sum vanishes.
This follows from the 2-cocycle condition of $\psi_1$. 
An incident of a 2-cocycle condition is depicted in Figure~\ref{gers1} (3), for the tree diagram Figure~\ref{gers2} (1). In Figure~\ref{gers1}  (4), the 2-cocycle condition with an additional circle is depicted, which also vanishes. This vanishing equality (4) corresponds to 
Figure~\ref{gers2} (A), and its terms are depicted in Figure~\ref{gers2} with label (A).
Thus the sum of the four circled tree diagrams labeled in Figure~\ref{gers2} vanishes 
by the 2-cocycle condition. 

In Figure~\ref{gers3}, all terms on the right-hand side are labeled by 
(A) through (E). The four terms together  labeled by the same letter, then, vanish
by the 2-cocycle condition, completing the proof. 

\begin{figure}[htb]
\begin{center}
\includegraphics[width=4in]{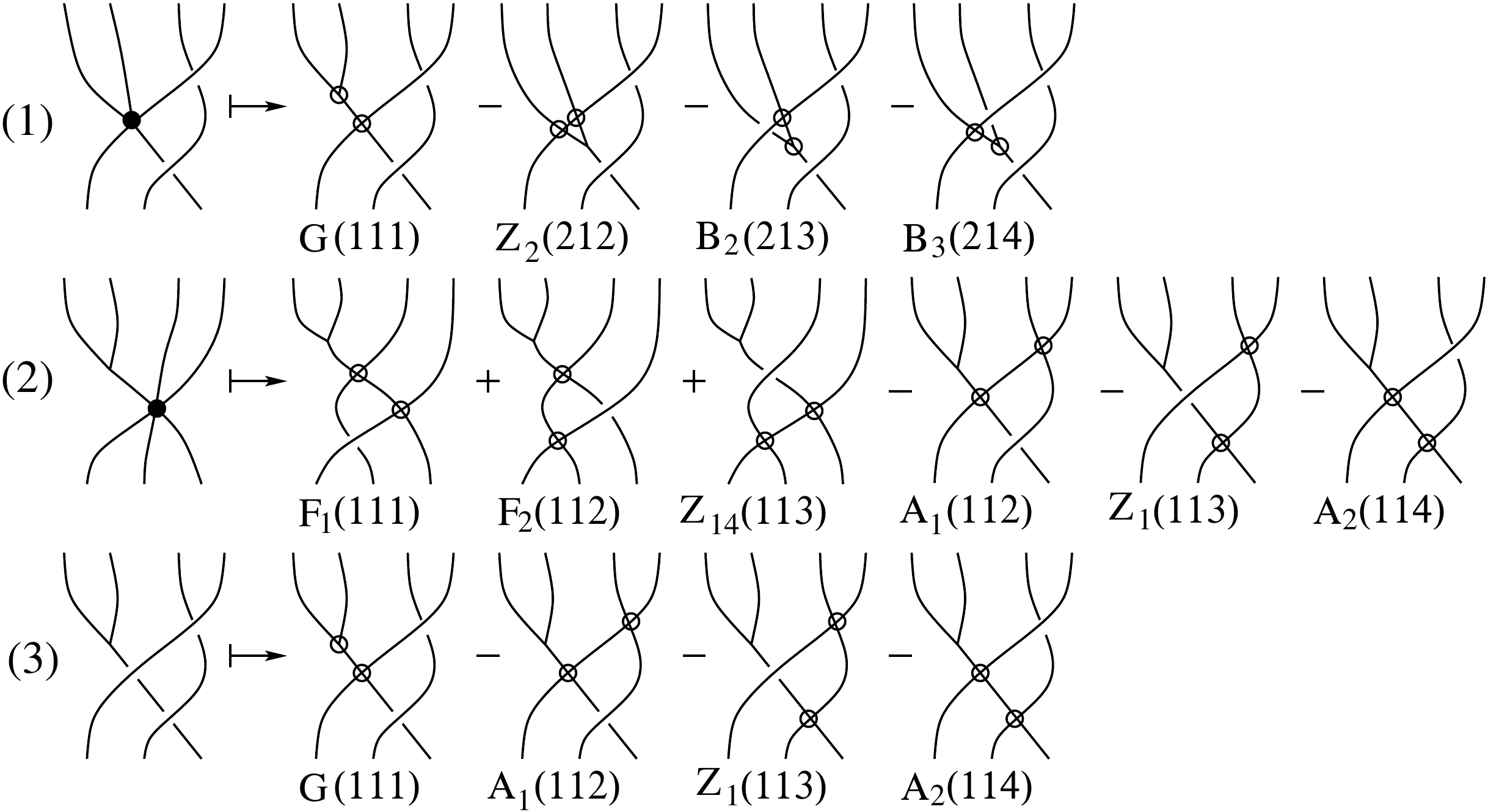}
\end{center}
\caption{}
\label{IIYdiff0}
\end{figure}

\section{Proof of Lemma~\ref{lem:3cocy}}\label{AppendixB}

The proof follows an argument similar to Lemma~\ref{lem:Hdeform2}.
The 3-cocycle $\alpha$ and $\beta$ are  represented by  black vertices of valency 2 and 3, respectively, in square-framed diagrams in Figure~\ref{YII}.
The first term $(LD1)$ of the differential on the left hand side represents 
$({\mathbb 1} \otimes R)(\alpha \otimes {\mathbb 1} ) ({\mathbb 1} ^{\otimes 2} \otimes R)$.
The 3-cocycle condition $\delta^3_{\rm YI} (\beta \oplus \alpha)  =0 $ is represented by 
the expression $(LD1) + (LD2) + (LD3) + (LD4) - (RD1) - (RD2) - (RD3)$. 

In Figure~\ref{IIYdiff0} (1), the first term $(LD1)$ in Figure~\ref{YII} for a 3-cocycle condition of YI algebra is depicted at the left hand side. The labels are used later. 
We substitute  $\Xi^{\rm YI}_{2} - \Omega^{\rm YI}_{2}$ for $\alpha$  in the term $(LD1)$, 
then we obtain maps represented by the right hand side of Figure~\ref{IIYdiff0} (1). 
Similarly, when we substitute $\Xi^{\rm YI}_{2} - \Omega^{\rm YI}_{2}$ for $\beta$ in 
$(RD1)$ in Figure~\ref{YII}, then we obtain the right hand side of Figure~\ref{IIYdiff0} (2).
When these terms in (1) and (2) 
that have two circles on the diagrams in $(I)$,
 in 
Figure~\ref{YII}, then we obtain the right hand side of Figure~\ref{IIYdiff0} (3). 
This list in the right hand side of Figure~\ref{IIYdiff0} (3) appear in Figure~\ref{IIY123} 
at the top left, above the dotted line.
Below the dotted line are canceling pairs that are used to apply 3-cocycle conditions.

All terms, represented by square-framed diagrams in Figure~\ref{YII}, 
that appear in the 3-cocycle condition, are substituted with  $\Xi^{\rm YI}_{2} - \Omega^{\rm YI}_{2}$,
and grouped together according to each diagram of Figure~\ref{YII}, from the initial diagram (I) to  the terminal diagram (T) through $(L1)$ through $(L3)$ and back to (I) through $(R2), (R1)$, 
in Figures~\ref{IIY123} through \ref{IIY67} above the dotted lines.
The terms in the right hand side $(R1)$ and $(R2)$ receive negative signs (left hand side - right hand side). 

\begin{figure}[htb]
\begin{center}
\includegraphics[width=6in]{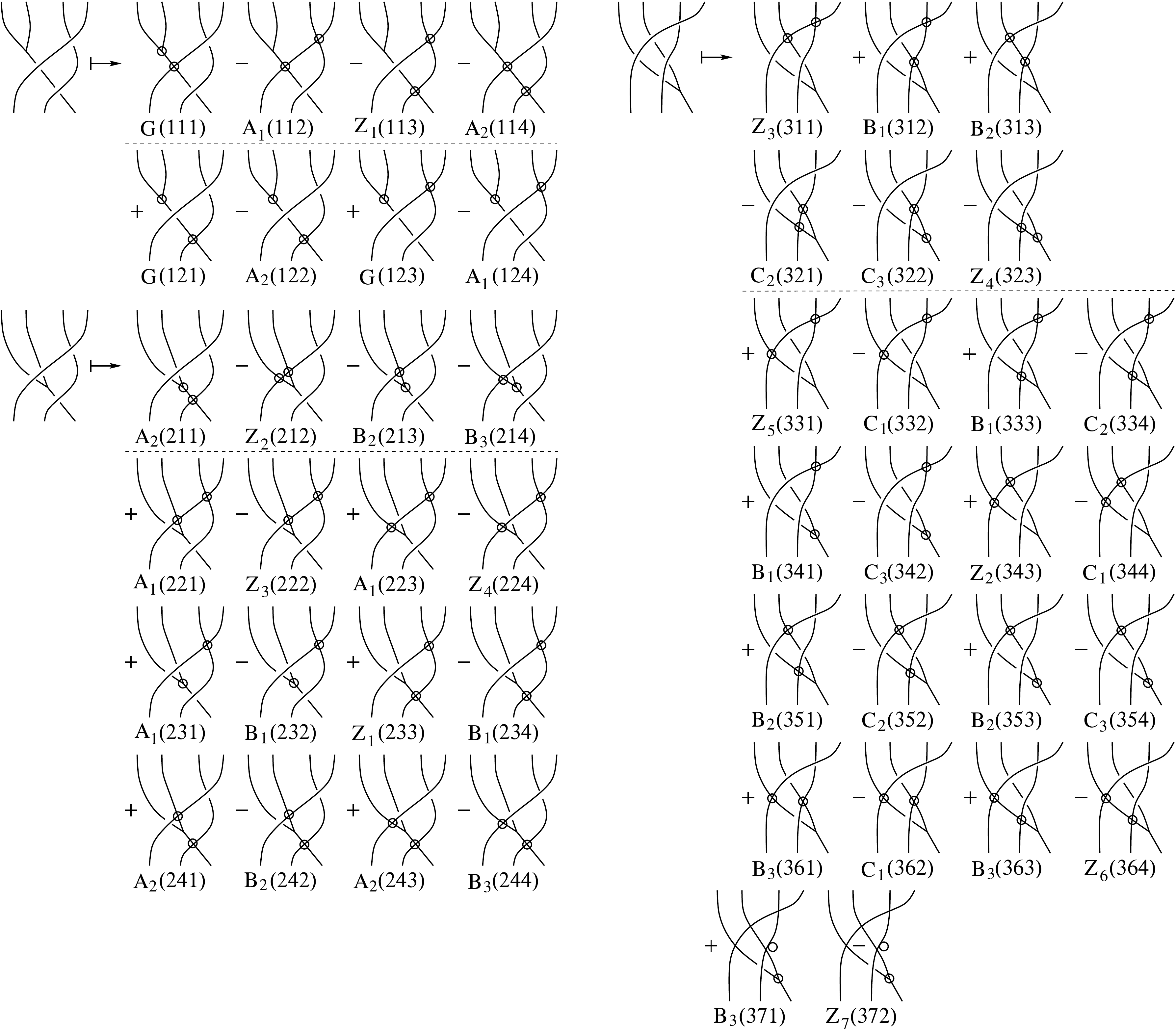}
\end{center}
\caption{}
\label{IIY123}
\end{figure}

\begin{figure}[htb]
\begin{center}
\includegraphics[width=6in]{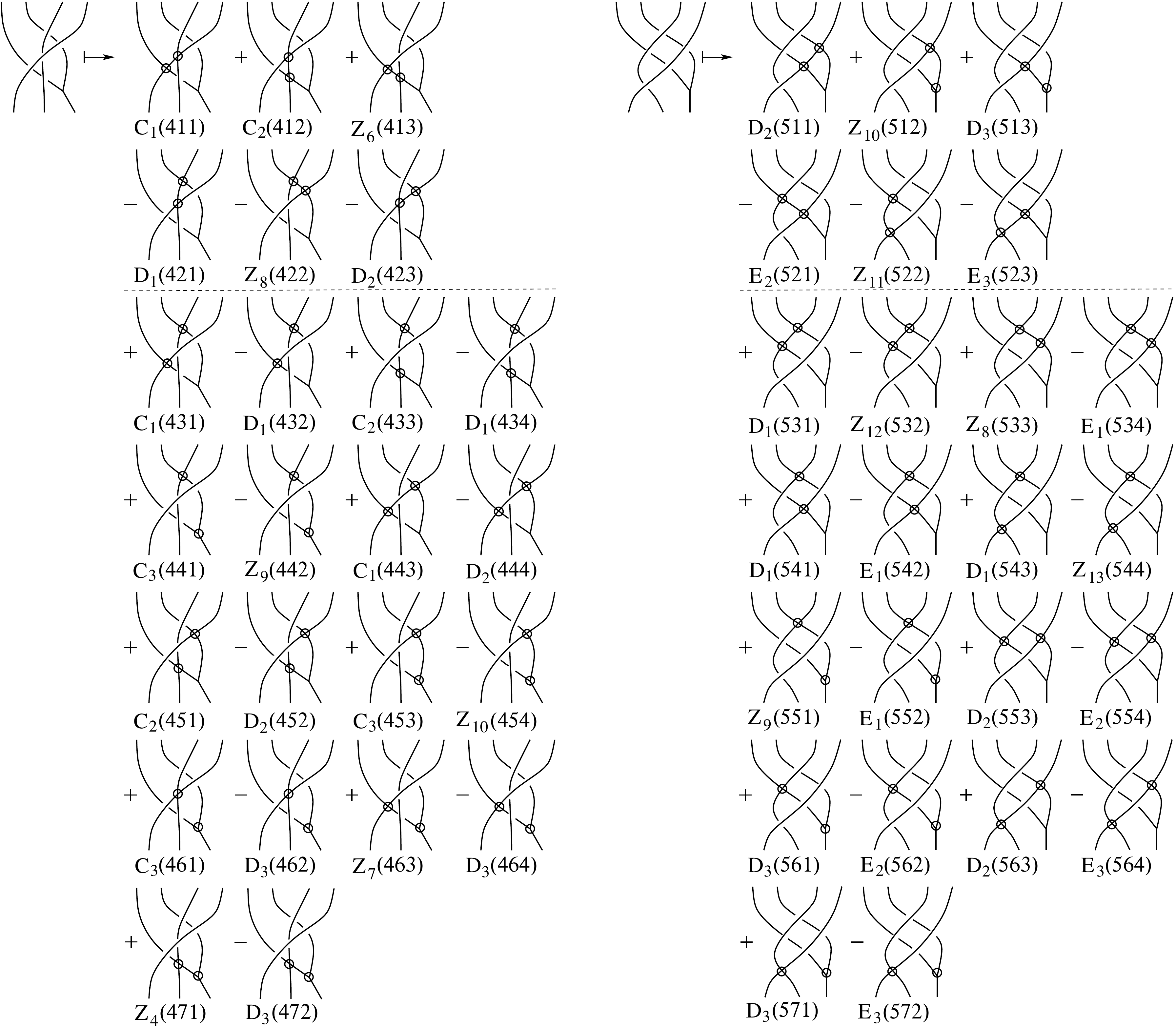}
\end{center}
\caption{}
\label{IIY45}
\end{figure}

\begin{figure}[htb]
\begin{center}
\includegraphics[width=6in]{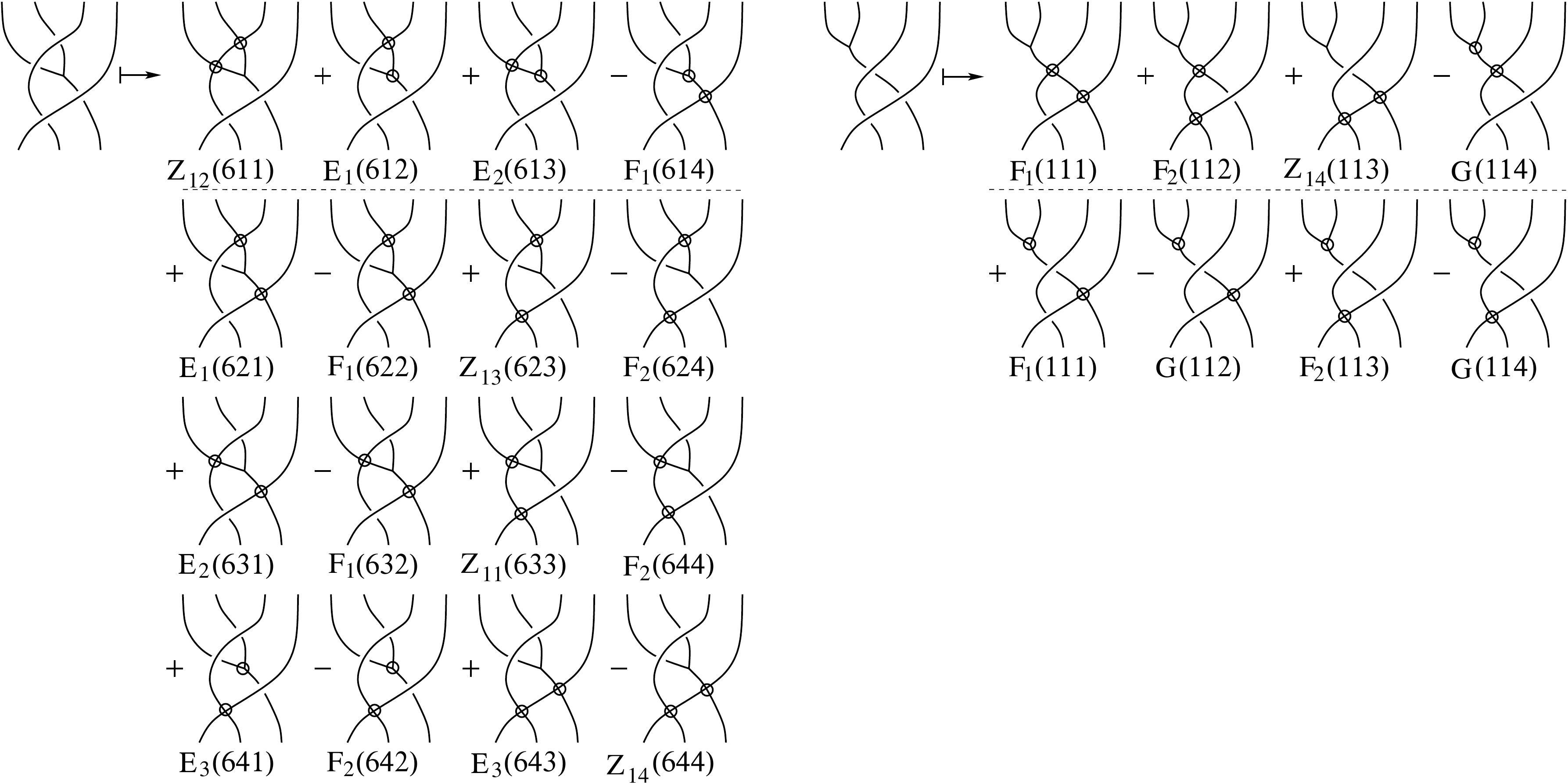}
\end{center}
\caption{}
\label{IIY67}
\end{figure}

In Figure~\ref{IIY3cocylist}, are listed the 3-cocycle conditions that are used to show that the sum of all terms vanish. In Figure~\ref{IIY3ex}, two examples (top two rows) of these 3-cocycle conditions are depicted in the right hand side. Each term is labeled as follows. The first letter $(A_1)$ represents which
3-cocycle condition is used. The leftmost digit of the three digits represents the figure number counted from
Figure~\ref{IIY123} through Figure~\ref{IIY67}. The remaining two digits represent the row-column position. 
Therefore the labels in the terms in Figure~\ref{IIY123} through Figure~\ref{IIY67}
shows that all these terms cancel by the 3-cocycle conditions, and cancelations with opposite signs such as 
the pair depicted in Figure~\ref{IIY3ex} bottom row. 
These canceling terms are labeled by $Z$ with subscripts.
Thus the vanishing of the total sum is checked by labels of diagrams, and the other cases are similarly checked.  

\begin{figure}[htb]
\begin{center}
\includegraphics[width=3.5in]{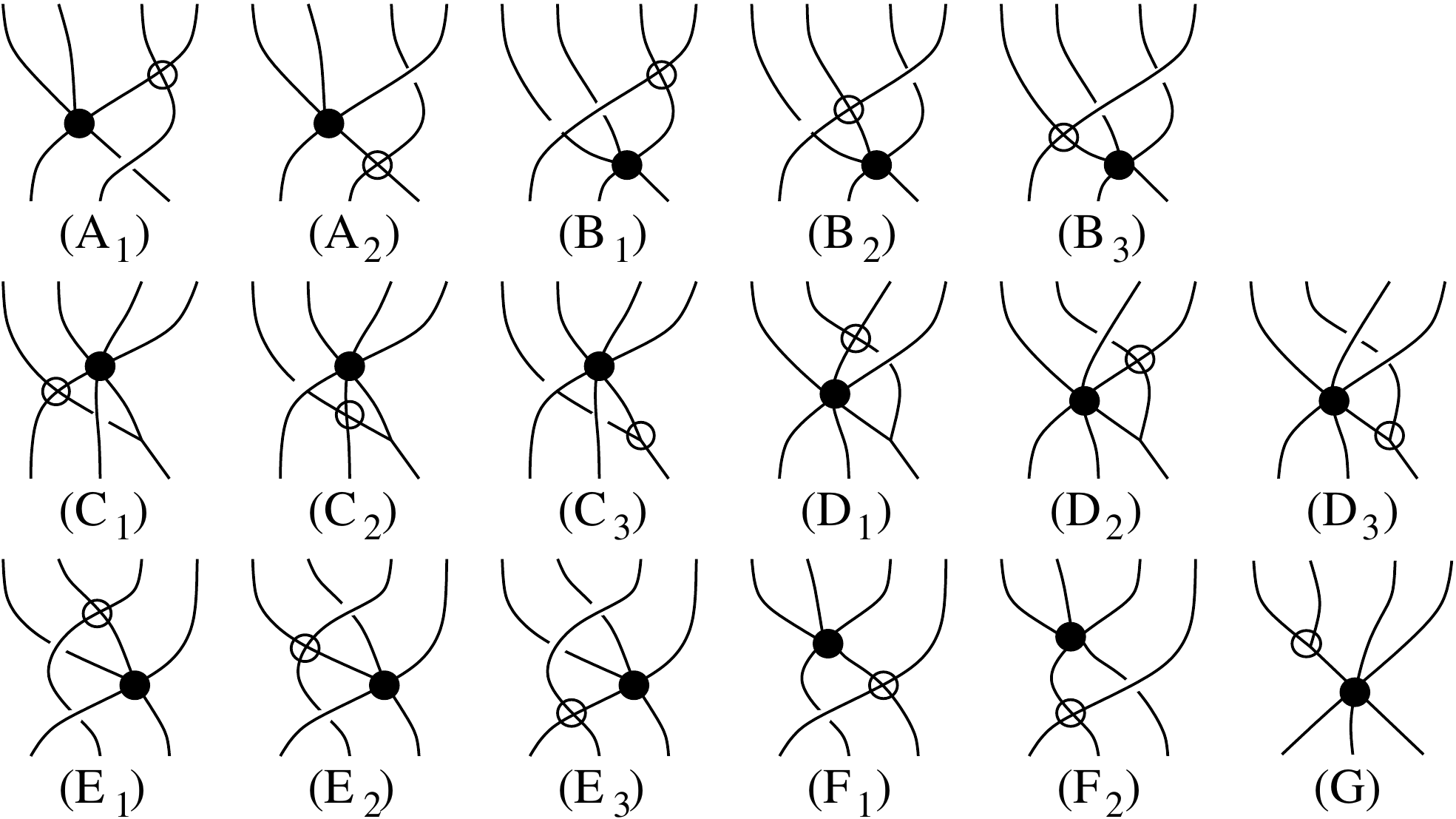}
\end{center}
\caption{}
\label{IIY3cocylist}
\end{figure}

\begin{figure}[htb]
\begin{center}
\includegraphics[width=4in]{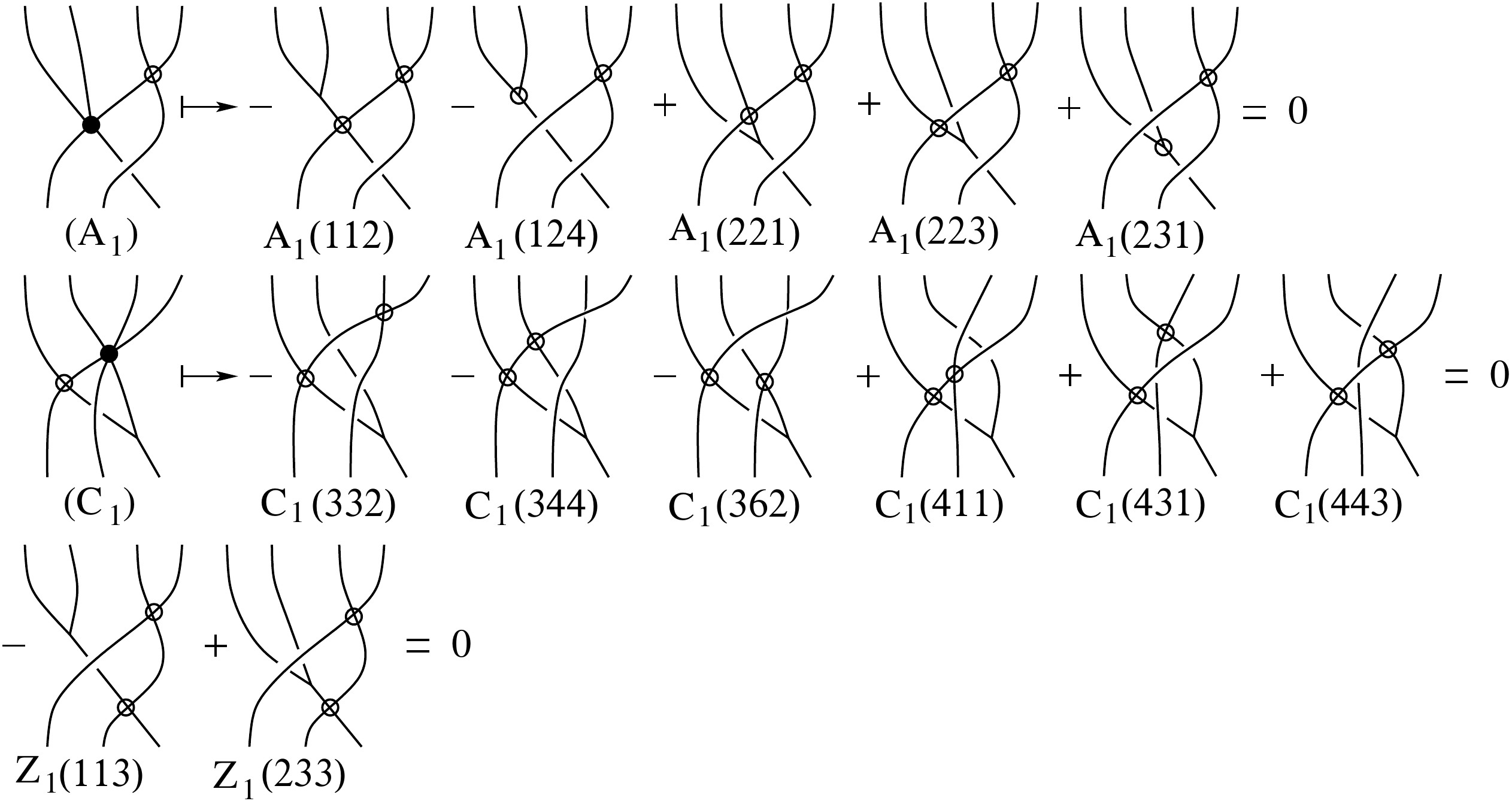}
\end{center}
\caption{}
\label{IIY3ex}
\end{figure}

 \clearpage

\end{document}